\theoremstyle{plain}
\newtheorem{theo}{Theorem}[section]
\newtheorem{lem}[theo]{Lemma}
\newtheorem{prop}[theo]{Proposition}
\theoremstyle{definition}
\newtheorem{definition}[theo]{Definition}
\theoremstyle{remark}
\newtheorem{rem}[theo]{Remark}
\numberwithin{equation}{section}
\newcommand{\R}{\mathbb{R}}
\newcommand{\divrg}{\textrm{div}\,}
\title{Estimating the area of extreme inclusions in Reissner-Mindlin plates
\thanks{Antonino Morassi is supported by PRIN 2015TTJN95 ``Identificazione
e diagnostica di sistemi strutturali complessi''. Edi Rosset is supported by
PRIN 201758MTR2 ``Direct and inverse problems for partial differential equations: theoretical
aspects and applications'', by Progetto GNAMPA 2019 ``Propriet\`a delle soluzioni di equazioni alle
derivate parziali e applicazioni ai problemi inversi" Istituto Nazionale di Alta Matematica (INdAM) and by FRA2016 ``Problemi inversi, dalla stabilit\`a alla
ricostruzione", Universit\`a degli Studi di Trieste. }}
\author{Antonino Morassi\thanks{Dipartimento Politecnico di Ingegneria e Architettura,
Universit\`a degli Studi di Udine, via Cotonificio 114, 33100
Udine, Italy. E-mail: \textsf{antonino.morassi@uniud.it}} \  and Edi
Rosset\thanks{Dipartimento di Matematica e Geoscienze,
Universit\`a degli Studi di Trieste, via Valerio 12/1, 34127
Trieste, Italy. E-mail: \textsf{rossedi@units.it}}}
\begin{document}

\maketitle

\centerline{\textit{Dedicated to Sergio in occasion of his
$65^{th}$ birthday}}

\begin{abstract}
\noindent We derive upper and lower estimates of the area of
unknown defects in the form of either cavities or rigid inclusions
in Mindlin-Reissner elastic plates in terms of the difference
$\delta W$ of the works exerted by boundary loads on the defected
and on the reference plate. It turns out that the upper estimates
depend linearly on $\delta W$, whereas the lower ones depend
quadratically on $\delta W$. These results continue a line of
research concerning size estimates of extreme inclusions in
electric conductors, elastic bodies and plates.

\medskip

\noindent\textbf{Mathematics Subject Classification (2010)}:
Primary 35B60. Secondary 35B30, 35Q74, 35R30.

\medskip

\noindent \textbf{Keywords}: Inverse problems, Reissner-Mindlin
elastic plates, size estimates, cavities, rigid inclusions.
\end{abstract}

\centerline{}

\section{Introduction}
\label{Introduction}

In the present paper we continue a line of research concerning the
identification of unknown defects inside Reissner-Mindlin plates.
As is well known, the Reissner-Mindlin theory gives a more refined
model for elastic plates with respect to the Kirchhoff-Love theory
and, in particular, it allows for an accurate description of
moderately thick plates, having thickness $h$ of the order of one
tenth of the dimension of the middle plane $\Omega$.

Perhaps, the simplest approach in detecting defects consists in
estimating their \textit{size}. In \cite{l:mrv18} we derived
constructive upper and lower bounds of the area of elastic
inclusions (\textit{size estimates}) in terms of the difference
between the works exerted by given boundary loads in deforming the
plate without and with inclusion. Here, we obtain constructive
size estimates for \textit{extreme} inclusions in the form of
either cavities or rigid inclusions. The interested reader can
refer, among others, to the papers \cite{ARS00-PAMS},
\cite{l:bcoz}, \cite{DiCLW13-ASNSP}, \cite{DiCLVW13-SIMA},
\cite{Ik98-JIIPP}, \cite{KaKiMi12}, \cite{KaMi13},
\cite{KSS97-SIMA}, \cite{MiNg12} for results and application of
the size estimate approach to various physical contexts.

Let $\Omega \times \left [ -\frac{h}{2},  \frac{h}{2} \right ]$ be
the plate, with $\Omega$ a bounded domain in $\R^2$, and let
$\mathbb P$ the fourth-order bending tensor and $S$ the shearing
matrix of the reference plate (i.e., without defects). Let us
denote by $D \times \left [ -\frac{h}{2},  \frac{h}{2} \right ]$,
$D \subset \subset \Omega$, the unknown defect to be determined.
Our experiment consists in applying the same (self-equilibrated)
transverse force field $\overline{Q}$ and couple field
$\overline{M}$ at the boundary of the plate, in presence and in
absence of the inclusion.

When $D$ represents a cavity, the infinitesimal transverse
displacement $w_c$ and the infinitesimal rigid rotation
$\varphi_c$ (of transverse material fiber to the middle plane
$\Omega$) satisfy the following Neumann boundary value problem
\begin{center}
\( {\displaystyle \left\{
\begin{array}{lr}
     {\rm div}(S(\varphi_c+\nabla w_c))=0,
      & \mathrm{in}\ \Omega \setminus \overline {D},
        \vspace{0.25em}\\
      {\rm div}({\mathbb P} \nabla \varphi_c) -S(\varphi_c+\nabla w_c)=0,
      & \mathrm{in}\ \Omega \setminus \overline {D},
          \vspace{0.25em}\\
      S(\varphi_c+\nabla w_c)\cdot n=\overline{Q}
      & \mathrm{on}\ \partial \Omega,
        \vspace{0.25em}\\
      ({\mathbb P} \nabla \varphi_c)n=\overline{M}, & \mathrm{on}\ \partial \Omega,
          \vspace{0.25em}\\
         S(\varphi_c+\nabla w_c)\cdot n=0
      & \mathrm{on}\ \partial D,
        \vspace{0.25em}\\
                ({\mathbb P} \nabla \varphi_c)n=0, &\mathrm{on}\ \partial {D},
          \vspace{0.25em}\\
\end{array}
\right. } \) \vskip -10.3em
\begin{eqnarray}
& & \label{eq:1.intro-dir-pbm-cav-1}\\
& & \label{eq:1.intro-dir-pbm-cav-2}\\
& & \label{eq:1.intro-dir-pbm-cav-3}\\
& & \label{eq:1.intro-dir-pbm-cav-4}\\
& & \label{eq:1.intro-dir-pbm-cav-5}\\
& & \label{eq:1.intro-dir-pbm-cav-6}
\end{eqnarray}
\end{center}
where $n$ is the outer unit normal to $\Omega$ and $D$,
respectively.

In case $D$ is a rigid inclusion, the analogous statical
equilibrium problem becomes the following mixed boundary value
problem
\begin{center}
\( {\displaystyle \left\{
\begin{array}{lr}
     {\rm div}(S(\varphi_r +\nabla w_r ))=0,
      & \mathrm{in}\ \Omega \setminus \overline{D},
        \vspace{0.25em}\\
      {\rm div}({\mathbb P} \nabla \varphi_r ) -S(\varphi_r +\nabla w_r)=0,
      & \mathrm{in}\ \Omega \setminus \overline {D},
          \vspace{0.25em}\\
      S(\varphi_r +\nabla w_r )\cdot n=\overline{Q}
      & \mathrm{on}\ \partial \Omega,
        \vspace{0.25em}\\
      ({\mathbb P} \nabla \varphi_r )n=\overline{M}, & \mathrm{on}\ \partial \Omega,
          \vspace{0.25em}\\
      \varphi_r=b,
      & \mathrm{in}\ \overline{D},
        \vspace{0.25em}\\
      w_r=-b \cdot x + a, &\mathrm{in}\ \overline{D},
\end{array}
\right. } \) \vskip -10.3em
\begin{eqnarray}
& & \label{eq:1.intro-dir-pbm-rig-1}\\
& & \label{eq:1.intro-dir-pbm-rig-2}\\
& & \label{eq:1.intro-dir-pbm-rig-3}\\
& & \label{eq:1.intro-dir-pbm-rig-4}\\
& & \label{eq:1.intro-dir-pbm-rig-5}\\
& & \label{eq:1.intro-dir-pbm-rig-6}
\end{eqnarray}
\end{center}
where $\varphi_r$ and $w_r$ are continuous functions through
$\partial D$, $a$ is any constant and $b$ is any $2$-dimensional
vector.

When $D$ is empty, the equilibrium of the undefective plate is
modelled by
\begin{center}
\( {\displaystyle \left\{
\begin{array}{lr}
     {\rm div}(S(\varphi_0+\nabla w_0))=0,
      & \mathrm{in}\ \Omega,
        \vspace{0.25em}\\
      {\rm div}({\mathbb P} \nabla \varphi_0) -S(\varphi_0+\nabla w_0)=0,
      & \mathrm{in}\ \Omega,
          \vspace{0.25em}\\
      S(\varphi_0+\nabla w_0)\cdot n=\overline{Q}
      & \mathrm{on}\ \partial \Omega,
        \vspace{0.25em}\\
      ({\mathbb P} \nabla \varphi_0)n=\overline{M}, & \mathrm{on}\ \partial \Omega.
          \vspace{0.25em}\\
\end{array}
\right. } \) \vskip -7.5em
\begin{eqnarray}
& & \label{eq:1.intro-dir-pbm-1}\\
& & \label{eq:1.intro-dir-pbm-2}\\
& & \label{eq:1.intro-dir-pbm-3}\\
& & \label{eq:1.intro-dir-pbm-4}
\end{eqnarray}
\end{center}
Let us define the following boundary integrals, which express the
works produced by the given boundary loads $\overline{Q}$,
$\overline{M}$ when $D$ is a cavity, a rigid inclusion, or $D$ is
absent:
\begin{equation}
  \label{eq:intro-works}
  W_c=
  \int_{\partial\Omega}\overline{Q}w_c+\overline{M}\cdot\varphi_c,
  \quad
   W_r = \int_{\partial \Omega} \overline{Q}w_r + \overline{M} \cdot
    \varphi_r, \quad
  W_0=
  \int_{\partial\Omega}\overline{Q}w_0+\overline{M}\cdot\varphi_0.
\end{equation}
Our size estimates are formulated in terms of the
\textit{normalized work gap}
\begin{equation}
  \label{eq:intro-normalized-work}
  \frac{\delta W}{W_0},
\end{equation}
where
\begin{equation}
  \label{eq:intro-work-gap}
  \delta W= W_c -W_0, \qquad \delta W= W_0 -W_r,
\end{equation}
respectively.

Upper and lower estimates require different mathematical tools
and, also, present different dependence on the work gap.
Precisely, upper estimates have \textit{linear character}, but
require additional a priori assumptions on the material (isotropy)
and on the defect $D$, namely the following \textit{Fatness
Condition}:
\begin{equation}
  \label{eq:intro-fatness-condt}
  \hbox{area} \{
  x \in D \ | \ \hbox{dist}(x, \partial \Omega) > h_1 \} \geq
  \frac{1}{2} \ \hbox{area} (D),
\end{equation}
where $h_1$ is a given parameter.

The isotropy assumption ensures the unique continuation property
in the form of a quantitative three spheres inequality for
solutions to the reference problem
\eqref{eq:1.intro-dir-pbm-1}--\eqref{eq:1.intro-dir-pbm-4}, which
was obtained in \cite{l:mrv18}. Let us observe that the above
Fatness Condition could be removed provided a doubling inequality
were at disposal. In that case, the upper estimate would have
H\"{o}lder character, see, for example, \cite[Theorems $2.6$ and
$2.8$]{l:amr02} for an electric conductor, and \cite[Theorems
$2.7$ and $2.9$]{l:mr03} in the context of linear elasticity.

The estimates {}from below are quite different, both for the a
priori assumptions and the techniques of proof. We can replace the
Fatness Condition \eqref{eq:intro-fatness-condt} with the weaker
Scale Invariant Fatness Condition
\begin{equation}
  \label{eq:intro-SIFC}
  \hbox{diam} (D) \leq Q_D r_D,
\end{equation}
where $r_D$ is \textit{unknown}. This hypothesis avoids collapsing
of $D$ to an empty set having null $2$-dimensional Lebesgue
measure. On the other hand, we need to assume Lipchitz regularity of the boundary of $D$ and the dependence of $area(D)$ on $\delta
W$ has a quadratic character.

Main mathematical tools for cavities are constructive
Poincar\'{e}-type and Korn-type inequalities and, in particular, a
generalized Korn inequality recently obtained in \cite{l:mrv17},
suitable to handle the Reissner-Mindlin system.

The treatment of rigid inclusions requires, in addition, boundary
estimates in $L^2$ for the boundary value problem
\eqref{eq:1.intro-dir-pbm-rig-1}--\eqref{eq:1.intro-dir-pbm-rig-6}.
These estimates are based on identities of Rellich type (see
\cite{l:r}, \cite{l:pw}), and are in the style of the solvability
in $L^2$ of the regularity and Neumann problems formulated in
\cite{l:fkp}, \cite{l:kp}.

The paper is organized as follows. In Section \ref{sec:notation}
we introduce some useful notation. Direct problems are described
in Section \ref{sec:directproblems}, and the size estimates are
stated in Section \ref{sec:inverseproblems}. Finally, proofs are
given in Section \ref{SE-proof_cavities} and \ref{SE-proof_rigid},
for cavities and rigid inclusions, respectively.

\section{Notation} \label{sec:notation}

Let $P=(x_1(P), x_2(P))$ be a point of $\R^2$.
We shall denote by $B_r(P)$ the open disc in $\R^2$ of radius $r$ and
center $P$ and by $R_{a,b}(P)$ the rectangle
$R_{a,b}(P)=\{x=(x_1,x_2)\ |\ |x_1-x_1(P)|<a,\ |x_2-x_2(P)|<b \}$. To simplify the notation,
we shall denote $B_r=B_r(O)$, $R_{a,b}=R_{a,b}(O)$.

\begin{definition}
  \label{def:Lip_reg} (Lipschitz regularity)
Let $G$ be a bounded domain in ${\R}^{2}$. We
say that a portion $\Sigma$ of $\partial G$ is of
Lipschitz class with constants $\rho$, $L$
if, for any $P \in \Sigma$, there exists a rigid transformation of
coordinates under which we have $P=O$ and
\begin{equation*}
  G \cap R_{\rho,L\rho}=\{x=(x_1,x_2) \in R_{\rho,L\rho}\quad | \quad
x_{2}>\psi(x_1)
  \},
\end{equation*}
where $\psi$ is a Lipschitz continuous function on
$\left(-\rho,\rho\right)$ satisfying
\begin{equation*}
\psi(0)=0,
\end{equation*}
\begin{equation*}
\|\psi\|_{{C}^{0,1}\left(-\rho,\rho\right)} \leq L\rho.
\end{equation*}

\end{definition}
\begin{rem}
  \label{rem:2.1}
  We use the convention to normalize all norms in such a way that their
  terms are dimensionally homogeneous with the $L^\infty$ norm and coincide with the
  standard definition when the dimensional parameter equals one. For instance, the norm appearing above is meant as follows
\begin{equation}
  \label{eq:lip_norm}
  \|\psi\|_{{C}^{0,1}(-\rho,\rho)} =
  \|\psi\|_{{L}^{\infty}(-\rho,\rho)}+
  \rho\|\psi'\|_{{L}^{\infty}(-\rho,\rho)}.
\end{equation}
\end{rem}
Given $G\subset\R^2$, for any $t>0$ we denote
\begin{equation}
  \label{eq:int_env}
  G_{t}=\{x \in G \mid \hbox{dist}(x,\partial
  G)>t
  \},
\end{equation}
\begin{equation}
   \label{eq:ext_env}
   G^{r} = \{ x \in \R^{2}\quad | \quad 0<\textrm{dist}(x,G)<r \}.
\end{equation}

Let
\begin{equation}
  \label{eq:Affini}
    \begin{split}
 {\mathcal A}&=\{z=(\varphi,w)\ |\ \varphi = b, w=-b\cdot x+a, a\in \R, b\in \R^2
  \}\\
    &=\{z=(\varphi,w)\ |\ \nabla\varphi = 0, \varphi+\nabla w=0\}.
    \end{split}
\end{equation}

We denote by $\mathbb{M}^2$ the space of $2 \times 2$ real valued
matrices and by ${\mathcal L} (X, Y)$ the space of bounded linear
operators between Banach spaces $X$ and $Y$.

For every $2 \times 2$ matrices $A$, $B$ and for every $\mathbb{L}
\in{\mathcal L} ({\mathbb{M}}^{2}, {\mathbb{M}}^{2})$, we use the
following notation:
\begin{equation}
  \label{eq:notation_1}
  ({\mathbb{L}}A)_{ij} = L_{ijkl}A_{kl},
\end{equation}
\begin{equation}
  \label{eq:notation_2}
  A \cdot B = A_{ij}B_{ij}, \quad |A|= (A \cdot A)^{\frac {1}
  {2}}, \quad tr(A)=A_{ii},
\end{equation}
\begin{equation}
  \label{eq:notation_3}
  (A^T)_{ij}=A_{ji}, \quad \widehat{A} = \frac{1}{2}(A+A^T).
\end{equation}
Notice that here and in the sequel summation over repeated indexes
is implied.

\section{Formulation of the direct problems} \label{sec:directproblems}

Let us consider a plate, with constant thickness $h$, represented
by a bounded domain $\Omega$ in $\R^2$ having boundary of
Lipschitz class, with constants $\rho_0$ and $L_0$, and
satisfying
\begin{equation}
  \label{eq:Omegabound}
    \hbox{diam}(\Omega)\leq Q_0\rho_0,
\end{equation}
for some $Q_0 >0$, and
\begin{equation}
  \label{eq:ObelongsOmega}
    O\in \Omega.
\end{equation}

The \textit{reference} plate is assumed to be made by linearly
elastic isotropic material with Lam\'{e} moduli $\lambda$ and
$\mu$ satisfying the strong convexity conditions
\begin{equation}
  \label{eq:Lame-ell}
    \mu(x)\geq \alpha_0, \quad 2\mu(x)+3\lambda(x)\geq \gamma_0,
    \quad \hbox{in } \overline{\Omega},
\end{equation}
for given positive constants $\alpha_0$, $\gamma_0$, and the
regularity condition
\begin{equation}
  \label{eq:Lame-reg}
    \|\lambda\|_{C^{0,1}(\overline{\Omega})}+\|\mu\|_{C^{0,1}(\overline{\Omega})}\leq
    \alpha_1,
\end{equation}
where $\alpha_1$ is a given constant. Therefore, the shearing and
bending plate tensors take the form
\begin{equation}
  \label{eq:shearing-tensor}
    SI_2, \quad S=h\mu, \quad S \in
    C^{0,1}(\overline{\Omega}),
\end{equation}
\begin{equation}
  \label{eq:bending-tensor}
    {\mathbb P} A = B\left[(1-\nu)\widehat{A}+\nu tr(A)I_2\right],
    \quad {\mathbb P} \in
    C^{0,1}(\overline{\Omega}),
\end{equation}
where $I_2$ is the two-dimensional unit matrix, $A$ denotes a $2
\times 2$ matrix and
\begin{equation}
  \label{eq:Bending}
    B=\frac{Eh^3}{12(1-\nu^2)},
\end{equation}
with Young's modulus $E$ and Poisson's coefficient $\nu$ given by
\begin{equation}
  \label{eq:Young-Poisson}
   E=\frac{\mu(2\mu+3\lambda)}{\mu+\lambda}, \quad
   \nu=\frac{\lambda}{2(\mu+\lambda)}.
\end{equation}
By \eqref{eq:Lame-ell} and \eqref{eq:Lame-reg}, the ellipticity conditions for $S$ and $\mathbb P$ become
\begin{equation}
  \label{eq:convex-S-Lame}
    h \sigma_0 \leq S \leq h \sigma_1, \quad \hbox{in } \overline{\Omega},
\end{equation}
and
\begin{equation}
  \label{eq:convex-P-Lame}
     \frac{h^3}{12} \xi_0 | \widehat{A} |^2 \leq {\mathbb P}A \cdot A \leq \frac{h^3}{12} \xi_1 | \widehat{A} |^2, \quad \hbox{in } \overline{\Omega},
\end{equation}
for every $2\times 2$ matrix $A$, with
\begin{equation}
  \label{eq:constants_dependence}
     \sigma_0 =\alpha_0, \quad \sigma_1 =\alpha_1, \quad \xi_0=\min\{2\alpha_0, \gamma_0\}, \quad
        \xi_1=2\alpha_1.
\end{equation}
Moreover,
\begin{equation}
  \label{eq:lip_operatori}
   \|S\|_{ C^{0,1}(\overline{\Omega})} \leq h\alpha_1, \qquad
     \|{\mathbb P}\|_{C^{0,1}(\overline{\Omega})} \leq Ch^3,
\end{equation}
with $C>0$ only depending on $\alpha_0$, $\alpha_1$,
$\gamma_0$.

Let the boundary of the plate $\partial \Omega$ be subject to a
transverse force field $\overline{Q}$ and a couple field
$\overline{M}$ satisfying
\begin{equation}
  \label{eq:regolarita-carico}
    \overline{Q} \in H^{-\frac{1}{2}}(\partial \Omega), \quad \overline{M} \in H^{-\frac{1}{2}}(\partial \Omega,
    \R^2),
\end{equation}
and the compatibility conditions
\begin{equation}
  \label{eq:compatibilita-carico}
     \int_{\partial \Omega} \overline{Q}=0, \quad  \int_{\partial
     \Omega}(\overline{Q}x -\overline{M})=0.
\end{equation}
Throughout the paper, the defect is represented by an open set $D$
satisfying
\begin{equation}
  \label{eq:connessione}
     D \subset \subset \Omega, \qquad \Omega \setminus  \overline{D} \hbox{ is connected}.
\end{equation}
When $D$ represents a cavity, the statical equilibrium is governed
by the Neumann boundary value problem
\begin{center}
\( {\displaystyle \left\{
\begin{array}{lr}
     {\rm div}(S(\varphi_c+\nabla w_c))=0,
      & \mathrm{in}\ \Omega \setminus \overline {D},
        \vspace{0.25em}\\
      {\rm div}({\mathbb P} \nabla \varphi_c) -S(\varphi_c+\nabla w_c)=0,
      & \mathrm{in}\ \Omega \setminus \overline {D},
          \vspace{0.25em}\\
      S(\varphi_c+\nabla w_c)\cdot n=\overline{Q}
      & \mathrm{on}\ \partial \Omega,
        \vspace{0.25em}\\
      ({\mathbb P} \nabla \varphi_c)n=\overline{M}, & \mathrm{on}\ \partial \Omega,
          \vspace{0.25em}\\
         S(\varphi_c+\nabla w_c)\cdot n=0
      & \mathrm{on}\ \partial D,
        \vspace{0.25em}\\
                ({\mathbb P} \nabla \varphi_c)n=0, &\mathrm{on}\ \partial {D},
          \vspace{0.25em}\\
\end{array}
\right. } \) \vskip -10.3em
\begin{eqnarray}
& & \label{eq:1.dir-pbm-cav-1}\\
& & \label{eq:1.dir-pbm-cav-2}\\
& & \label{eq:1.dir-pbm-cav-3}\\
& & \label{eq:1.dir-pbm-cav-4}\\
& & \label{eq:1.dir-pbm-cav-5}\\
& & \label{eq:1.dir-pbm-cav-6}
\end{eqnarray}
\end{center}
where $n$ is the outer unit normal to $\Omega$ and to $D$,
respectively. A weak solution to the above system is a pair
$(\varphi_c,w_c)\in H^1(\Omega \setminus \overline{D}, \R^2)\times
H^1(\Omega \setminus \overline{D})$ satisfying
\begin{equation}
  \label{eq:weak_cav}
\int_{\Omega\setminus \overline{D}} {\mathbb P}\nabla \varphi_c\cdot \nabla \psi + \int_{\Omega\setminus \overline{D}} S(\varphi_c+\nabla w_c)\cdot
        (\psi+\nabla v)=\int_{\partial\Omega}\overline{Q} v + \overline{M}\cdot \psi,
\end{equation}
for every $\psi\in H^1(\Omega \setminus \overline{D}, \R^2)$ and for every $v\in H^1(\Omega \setminus \overline{D})$.

Problem \eqref{eq:1.dir-pbm-cav-1}--\eqref{eq:1.dir-pbm-cav-6} admits a weak solution $(\varphi_c,w_c)\in H^1(\Omega\setminus \overline{D}, \R^2)\times
H^1(\Omega\setminus \overline{D})$, which is uniquely determined up to addition of an element $z\in {\mathcal A}$.

Since $D$ has Lipschitz
boundary, one can continue a solution pair $(\varphi_c,w_c)$ to a $H^1(\Omega, \R^2)\times H^1(\Omega)$ function pair, which we continue to call $(\varphi_c,w_c)$:
\begin{equation}
  \label{eq:+-}
  (\varphi_c,w_c)=\left\{ \begin{array}{ll}
 (\varphi_c^{+},w_c^{+}) & \textrm{in } \Omega \setminus
  \overline{D},\\
  &  \\
(\varphi_c^{-},w_c^{-}) & \textrm{in }  D,
  \end{array}\right.
\end{equation}
where $(\varphi_c^{+}, w_c^{+})$ is the given solution $(\varphi_c,w_c)$ and $(\varphi_c^{-}, w_c^{-})$ is defined as the weak solution of the
Dirichlet problem

\begin{equation}
  \label{eq:Dir}
  \left\{ \begin{array}{ll}
   {\rm div}(S(\varphi_c^- +\nabla w_c^-))=0, &
  \mathrm{in}\  D,\\
    {\rm div}({\mathbb P} \nabla \varphi_c^-) -S(\varphi_c^-+\nabla w_c^-)=0,&
  \mathrm{in}\  D,\\
  \varphi_c^{-} = \varphi_c^{+} |_{\partial D}, & \mathrm{on}\ \partial D,\\
    w_c^{-} = w_c^{+} |_{\partial D}, & \mathrm{on}\ \partial D.
  \end{array}\right.
\end{equation}

When $D$ represents a rigid inclusion, the statical equilibrium is
governed by the mixed boundary value problem
\begin{center}
\( {\displaystyle \left\{
\begin{array}{lr}
     {\rm div}(S(\varphi_r^+ +\nabla w_r^+ ))=0,
      & \mathrm{in}\ \Omega \setminus \overline{D},
        \vspace{0.25em}\\
      {\rm div}({\mathbb P} \nabla \varphi_r^+ ) -S(\varphi_r^+ +\nabla w_r^+)=0,
      & \mathrm{in}\ \Omega \setminus \overline {D},
          \vspace{0.25em}\\
      S(\varphi_r^+ +\nabla w_r^+ )\cdot n=\overline{Q}
      & \mathrm{on}\ \partial \Omega,
        \vspace{0.25em}\\
      ({\mathbb P} \nabla \varphi_r^+ )n=\overline{M}, & \mathrm{on}\ \partial \Omega,
          \vspace{0.25em}\\
      \varphi_r^- + \nabla w_r^-=0,
      & \mathrm{in}\ \overline{D},
        \vspace{0.25em}\\
      \nabla \varphi_r^-= 0, &\mathrm{in}\ \overline{D},
        \vspace{0.25em}\\
      w_r^- = w_r^+,
      & \mathrm{on}\ \partial D,
        \vspace{0.25em}\\
      \varphi_r^- = \varphi_r^+,
      & \mathrm{on}\ \partial D,
\end{array}
\right. } \) \vskip -13.3em
\begin{eqnarray}
& & \label{eq:1.dir-pbm-rig-1}\\
& & \label{eq:1.dir-pbm-rig-2}\\
& & \label{eq:1.dir-pbm-rig-3}\\
& & \label{eq:1.dir-pbm-rig-4}\\
& & \label{eq:1.dir-pbm-rig-5}\\
& & \label{eq:1.dir-pbm-rig-6}\\
& & \label{eq:1.dir-pbm-rig-7}\\
& & \label{eq:1.dir-pbm-rig-8}
\end{eqnarray}
\end{center}
where we have denoted by $(\varphi_r^-, w_r^-)$ and $(\varphi_r^+,
w_r^+)$ the restriction of the solution $(\varphi_r, w_r)$ in
$\overline{D}$ and in $\Omega \setminus \overline{D}$,
respectively, and $n$ is the outer unit normal to $\Omega$. For future reference, we notice that the
compatibility conditions \eqref{eq:compatibilita-carico} together
with the above formulation
\eqref{eq:1.dir-pbm-rig-1}--\eqref{eq:1.dir-pbm-rig-8} imply
\begin{equation}
  \label{eq:rigid-1-10}
    \int_{\partial D} S(\varphi_r^+ + \nabla w_r^+)\cdot n=0,
\end{equation}
\begin{equation}
  \label{eq:rigid-1-11}
    \int_{\partial D}  ( (\mathbb{P} \nabla \varphi_r^+)n -    (S(\varphi_r^+ + \nabla w_r^+)\cdot n)x
    )=0.
\end{equation}
{}From the mechanical point of view, the last two conditions
state the force balance and the couple balance of the rigid
inclusion $D$, respectively.

Let us introduce
\begin{equation}
  \label{eq:spazio-H1-D}
    H_D^1(\Omega, \R^2) \times H_D^1(\Omega)
    = \{ (\varphi,w) \in H^1(\Omega, \R^2)\times H^1(\Omega) | \
        (\varphi,w)_{|_D}\in {\mathcal A}
    \}.
\end{equation}
A pair $(\varphi_r, w_r) \in H_D^1(\Omega, \R^2) \times
H_D^1(\Omega)$ is a weak solution to
\eqref{eq:1.dir-pbm-rig-1}--\eqref{eq:1.dir-pbm-rig-8} if for
every $(\psi, v) \in H_D^1(\Omega, \R^2) \times H_D^1(\Omega)$ we
have
\begin{equation}
  \label{eq:weak-solut-rigid}
    \int_\Omega \mathbb P \nabla \varphi_r \cdot \nabla \psi +
    \int_\Omega S(\varphi_r + \nabla w_r) \cdot (\psi + \nabla v)
    = \int_{\partial \Omega} \overline{Q}v + \overline{M} \cdot
    \psi.
\end{equation}
Since $H_D^1(\Omega, \R^2) \times H_D^1(\Omega) $ is a closed
linear subspace of $H^1(\Omega, \R^2) \times H^1(\Omega)$, by
standard variational methods it can be proven that
\eqref{eq:1.dir-pbm-rig-1}--\eqref{eq:1.dir-pbm-rig-8} has a weak solution $(\varphi_c,w_c)\in H^1(\Omega, \R^2)\times
H^1(\Omega)$, which is uniquely determined up to addition of an element $z\in {\mathcal A}$.

It is convenient to consider also the reference plate, in absence
of inclusions, whose statical equilibrium is governed by the
following Neumann boundary value problem
\begin{center}
\( {\displaystyle \left\{
\begin{array}{lr}
     {\rm div}(S(\varphi_0+\nabla w_0))=0,
      & \mathrm{in}\ \Omega,
        \vspace{0.25em}\\
      {\rm div}({\mathbb P} \nabla \varphi_0) -S(\varphi_0+\nabla w_0)=0,
      & \mathrm{in}\ \Omega,
          \vspace{0.25em}\\
      S(\varphi_0+\nabla w_0)\cdot n=\overline{Q}
      & \mathrm{on}\ \partial \Omega,
        \vspace{0.25em}\\
      ({\mathbb P} \nabla \varphi_0)n=\overline{M}, & \mathrm{on}\ \partial \Omega.
          \vspace{0.25em}\\
\end{array}
\right. } \) \vskip -7.5em
\begin{eqnarray}
& & \label{eq:1.dir-pbm-1}\\
& & \label{eq:1.dir-pbm-2}\\
& & \label{eq:1.dir-pbm-3}\\
& & \label{eq:1.dir-pbm-4}
\end{eqnarray}
\end{center}
A weak solution to the above Neumann problem is a pair $(\varphi_0,w_0)\in H^1(\Omega, \R^2)\times
H^1(\Omega)$ satisfying
\begin{equation}
  \label{eq:weak}
\int_{\Omega} {\mathbb P}\nabla \varphi_0\cdot \nabla \psi + \int_{\Omega} S(\varphi_0+\nabla w_0)\cdot
        (\psi+\nabla v)=\int_{\partial\Omega}\overline{Q} v + \overline{M}\cdot \psi,
\end{equation}
for every $\psi\in H^1(\Omega, \R^2)$ and for every $v\in H^1(\Omega)$.
The equilibrium problem \eqref{eq:1.dir-pbm-1}--\eqref{eq:1.dir-pbm-4} has a weak solution $(\varphi_0,w_0)\in H^1(\Omega, \R^2)\times
H^1(\Omega)$, which is uniquely determined up to addition of an element $z\in {\mathcal A}$.

Let us denote by $W_c$, $W_r$, $W_0$ the works exerted by the surface forces and couples $\overline{Q}$ and $\overline{M}$ when $D$ is a cavity, a rigid inclusion, or it is absent, respectively:
\begin{equation}
  \label{eq:W_c}
  W_c=
  \int_{\partial\Omega}\overline{Q}w_c+\overline{M}\cdot\varphi_c
    = \int_{\Omega\setminus \overline{D}}\mathbb{P}\nabla \varphi_c\cdot \nabla \varphi_c + \int_{\Omega\setminus \overline{D}}S(\varphi_c+\nabla w_c)\cdot(\varphi_c+\nabla w_c),
\end{equation}
\begin{equation}
  \label{eq:work-Wr}
    W_r = \int_{\partial \Omega} \overline{Q}w_r^+ + \overline{M} \cdot
    \varphi_r^+ =
    \int_{\Omega \setminus \overline{D}}\mathbb{P}\nabla \varphi_r^+\cdot \nabla \varphi_r^+ +
    \int_{\Omega\setminus \overline{D}}S(\varphi_r^+ +\nabla w_r^+)\cdot(\varphi_r^+ +\nabla
    w_r^+),
\end{equation}
\begin{equation}
  \label{eq:W_0}
  W_0=
  \int_{\partial\Omega}\overline{Q}w_0+\overline{M}\cdot\varphi_0
    = \int_{\Omega}\mathbb{P}\nabla \varphi_0\cdot \nabla \varphi_0 + \int_{\Omega}S(\varphi_0+\nabla w_0)\cdot(\varphi_0+\nabla w_0).
\end{equation}
\begin{rem}
\label{rem:works_independent} Let us notice that, in view of the
compatibility conditions \eqref{eq:compatibilita-carico},  the
works $W_c$, $W_r$, $W_0$ are well defined, that is they are
invariant with respect to the addition of any element $z$ in
$\mathcal A$ to the solution pair $(\varphi_c,w_c)$,
$(\varphi_r,w_r)$, $(\varphi_0,w_0)$, respectively.
\end{rem}

Throughout the paper, we shall choose the following normalization conditions for  $(\varphi_0, w_0)$:
\begin{equation}
  \label{eq:normalization-0}
 \int_\Omega \varphi_0 =0, \quad \int_\Omega w_0 =0.
\end{equation}

\section{The inverse problems: main results} \label{sec:inverseproblems}

Let us start considering the size estimates for cavities. We analyze separately the upper and lower estimates, since they require different a priori assumptions and techniques of proof.

\begin{theo}
  \label{theo:size_above_cavity}
  Let $\Omega$ be a bounded domain in $\R^{2}$, such that $\partial \Omega$
is of Lipschitz class with constants $\rho_{0}, L_{0}$ and
satisfying \eqref{eq:Omegabound}. Let $D$ be an open set
satisfying \eqref{eq:connessione} and
\begin{equation}
  \label{eq:D-fat_cavity}
  \left| D_{h_{1}\rho_0} \right| \geq \frac {1} {2} \left| D \right|,
  \end{equation}
for a given positive constant $h_{1}$. Let the reference plate be
made by linearly elastic isotropic material with Lam\'{e} moduli
$\lambda$, $\mu$ satisfying \eqref{eq:Lame-ell},
\eqref{eq:Lame-reg}. Let the transverse force field
$\overline{Q}\in H^{-1/2}(\partial \Omega)$ and the couple field $\overline{M}\in H^{-1/2}(\partial \Omega, \R^2)$ satisfy
\begin{equation}
  \label{eq:frequency}
    {\mathcal F} = \frac{\|\overline{M}\|_{H^{-1/2}(\partial \Omega)}+\rho_0
\|\overline{Q}\|_{H^{-1/2}(\partial \Omega)}}
{\|\overline{M}\|_{H^{-1}(\partial \Omega)}+\rho_0
\|\overline{Q}\|_{H^{-1}(\partial \Omega)}},
\end{equation}
for some positive constant ${\mathcal F}$.
The following estimate holds
\begin{equation}
  \label{eq:size_above_cavity}
  |D|
  \leq
  K\rho_0^2
  \frac{W_c-W_0}{W_0},
  \end{equation}
where $K$ only depends on $\alpha_{0}$, $\alpha_{1}$,
$\gamma_{0}$, $L_{0}$, $Q_0$,
$\frac{\rho_0}{h}$, $h_{1}$ and ${\mathcal F}$.
\end{theo}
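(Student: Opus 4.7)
The plan is the standard three-step strategy for size estimates from above---(i) an energy gap inequality, (ii) a quantitative lower bound on the reference energy density on $D$ via Lipschitz propagation of smallness, and (iii) combination through the Fatness Condition---adapting the machinery developed for conductors and linear elasticity (cf.\ \cite{l:amr02}, \cite{l:mr03}) to the Reissner-Mindlin framework introduced in \cite{l:mrv18}.

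For step (i), testing \eqref{eq:weak_cav} and \eqref{eq:weak} against the corresponding solutions gives Clapeyron's identities, so that the two energy functionals attain the minima $-\tfrac{1}{2}W_c$ and $-\tfrac{1}{2}W_0$. Since $(\varphi_0, w_0)|_{\Omega\setminus\overline{D}}$ is an admissible competitor in the variational problem governing $(\varphi_c, w_c)$, a short comparison of the minima yields the one-sided energy gap
\[
W_c - W_0 \geq \int_D \bigl(\mathbb{P}\nabla\varphi_0\cdot\nabla\varphi_0 + S(\varphi_0+\nabla w_0)\cdot(\varphi_0+\nabla w_0)\bigr).
\]

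For step (ii), I write $e_0$ for the integrand above, noting that $\int_\Omega e_0 = W_0$. The frequency \eqref{eq:frequency}, through the quantitative comparison of $H^{-1/2}$ and $H^{-1}$ norms of the loads and Caccioppoli-type bounds, identifies an a priori ball $B_{\bar{r}}(\bar{x}) \subset \Omega_{h_1\rho_0}$ with $\bar{r}$ comparable to $\rho_0$ on which $\int_{B_{\bar{r}}(\bar{x})} e_0 \geq c_1 W_0$, with $c_1$ depending only on $\mathcal{F}$ and the a priori data. I then invoke the quantitative three-spheres inequality for the isotropic Reissner-Mindlin system of \cite{l:mrv18} and iterate it along a chain of overlapping balls contained in $\Omega_{h_1\rho_0}$, whose length is uniformly bounded in terms of $Q_0$, $L_0$ and $h_1$. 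This Lipschitz propagation of smallness yields, for every $y\in D_{h_1\rho_0}$, a ball $B_{r_y}(y)\subset\Omega_{h_1\rho_0}$ of radius $r_y \sim h_1\rho_0$ such that
\[
\int_{B_{r_y}(y)} e_0 \geq c_2 W_0,
\]
with $c_2$ depending only on the a priori data.

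For step (iii), the Fatness Condition \eqref{eq:D-fat_cavity} guarantees that at least half of $D$ lies in $D_{h_1\rho_0}$; extracting a Vitali-type subfamily of disjoint balls $B_{r_{y_i}}(y_i)$ centered in $D_{h_1\rho_0}$ whose union captures a definite fraction of $|D|$ and summing the ball-wise bounds of step (ii) gives $\int_D e_0 \geq c_3 |D| W_0/\rho_0^2$. Combined with the energy gap of step (i) this is the desired estimate $|D| \leq K\rho_0^2 (W_c - W_0)/W_0$. The main obstacle is step (ii): the H\"older exponent in the three-spheres inequality, the size of the initial reference ball, and the length of the chain of balls must all be tracked constructively and shown to depend only on $\alpha_0, \alpha_1, \gamma_0, L_0, Q_0, \rho_0/h, h_1$ and $\mathcal{F}$. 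This is precisely the role played by the isotropy assumption (which makes the unique continuation of \cite{l:mrv18} available with explicit constants), the Fatness Condition (which keeps the chain away from $\partial\Omega$), and the frequency normalization (which rules out degeneration of the initial ball estimate as the loads concentrate).
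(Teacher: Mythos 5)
Your proposal is correct and follows essentially the same route as the paper: your step (i) is the left-hand inequality of Lemma \ref{lem:double_inequality} (which you obtain by the equivalent variational-comparison argument instead of the paper's integration-by-parts identities), your step (ii) is exactly the Lipschitz propagation of smallness of Proposition \ref{theo:LPS}, which the paper does not re-derive but quotes from \cite{l:mrv18}, and your step (iii) matches the paper's covering of $D_{h_1\rho_0}$ (the paper tiles with squares of side comparable to $h_1\rho_0$ and applies the propagation estimate to the square of minimal energy, rather than extracting a Vitali family). The only point to make explicit is that the disjoint balls must be contained in $D$, not merely in $\Omega_{h_1\rho_0}$ --- which does hold once the radius is fixed, say, at $\frac{2\theta}{7}h_1\rho_0\le h_1\rho_0$ with centers in $D_{h_1\rho_0}$ --- since otherwise summing the ball-wise bounds controls $\int_\Omega E^2(\varphi_0,w_0)$ rather than $\int_D E^2(\varphi_0,w_0)$.
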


In order to obtain the estimate {}from below, we assume that $D$ is a domain satisfying the following a priori assumptions concerning its regularity the shape:

\begin{equation}
  \label{eq:regul_D}
   \partial D \hbox{ is of Lipschitz class
with constants } r_D, L_D,
  \end{equation}
\begin{equation}
  \label{eq:SIFC}
  \hbox{diam}(D)\leq Q_D r_D,
  \end{equation}
where $L_D$, $Q_D$ are given a priori parameters, whereas $r_D$ is unknown.

Let us stress that $r_D$ is an \emph{unknown} parameter (otherwise, the size estimates should follow trivially), whereas the parameters $L_D$ and $Q_D$, which are invariant under scaling, will be considered as a priori information.

\begin{theo}
  \label{theo:size_below_cavity}
    Let $\Omega$ be a bounded domain in $\R^{2}$, such that $\partial \Omega$
is of Lipschitz class with constants $\rho_{0}, L_{0}$ and
satisfying \eqref{eq:Omegabound}. Let $D$ be a subdomain of
$\Omega$ satisfying \eqref{eq:connessione}, \eqref{eq:regul_D},
\eqref{eq:SIFC}, and such that
\begin{equation}
  \label{eq:D-away_boundary}
  \hbox{dist}(D,\partial\Omega)\geq d_0\rho_0,
\end{equation}
with $d_{0}>0$, $r_D<\frac{d_0}{2}\rho_0$.
Let the reference plate be
made by linearly elastic isotropic material with Lam\'{e} moduli
$\lambda$, $\mu$ satisfying \eqref{eq:Lame-ell},
\eqref{eq:Lame-reg}. Let the transverse force field
$\overline{Q}\in H^{-1/2}(\partial \Omega)$ and the couple field $\overline{M}\in H^{-1/2}(\partial \Omega, \R^2)$.
The following estimate holds
\begin{equation}
  \label{eq:size_below_cavity}
  |D|
  \geq
  k\rho_0^2
  \Psi \left( \frac {W_{c}-W_{0}} {W_{0}} \right),
  \end{equation}
where the function $\Psi$ is given by
\begin{equation}
  \label{eq:cav_Psi}
  [0,+\infty)\ni t\mapsto\Psi (t) = \frac{t^{2}}{1+t},
\end{equation}
and $k>0$ only depends on $\alpha_{0}$, $\alpha_{1}$,
$\gamma_{0}$, $L_{0}$, $Q_0$,
$\frac{\rho_0}{h}$, $d_{0}$,
$L_D$, $Q_D$.
\end{theo}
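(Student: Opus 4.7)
The plan is to establish the bilinear estimate
$$\delta W \;\leq\; C\,\rho_0^{-1}\,|D|^{1/2}\,W_0^{1/2}\,W_c^{1/2},$$
from which the conclusion follows on squaring and using $W_c=W_0+\delta W$, since
$$\frac{(\delta W)^{2}}{W_0\,W_c}\;=\;\frac{(\delta W/W_0)^{2}}{1+\delta W/W_0}\;=\;\Psi\!\left(\frac{\delta W}{W_0}\right).$$
The starting point is a polarization identity. I extend $(\varphi_c,w_c)$ to the whole of $\Omega$ by the Dirichlet extension $(\varphi_c^-,w_c^-)$ defined in \eqref{eq:Dir}; using this extended pair as test in the weak formulation \eqref{eq:weak} for $(\varphi_0,w_0)$, and $(\varphi_0,w_0)|_{\Omega\setminus\overline D}$ as test in \eqref{eq:weak_cav} for $(\varphi_c,w_c)$, the two right-hand sides equal $W_c$ and $W_0$, while the left-hand sides differ only by the integral over $D$. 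Subtracting yields
$$W_c-W_0 \;=\; \int_{D}\mathbb P\nabla\varphi_0\cdot\nabla\varphi_c^- \,+\, S(\varphi_0+\nabla w_0)\cdot(\varphi_c^-+\nabla w_c^-).$$
A pointwise Cauchy--Schwarz inequality for the positive definite quadratic forms $\mathbb P$ and $S$ (see \eqref{eq:convex-P-Lame}, \eqref{eq:convex-S-Lame}) then gives
$$\delta W \;\leq\; C\left(\int_{D} E_0\right)^{1/2}\left(\int_{D} E_c^-\right)^{1/2},$$
where $E_0=\mathbb P\nabla\varphi_0\cdot\nabla\varphi_0+S(\varphi_0+\nabla w_0)\cdot(\varphi_0+\nabla w_0)$ and $E_c^-$ is defined analogously.

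The first factor is controlled by \emph{interior regularity} for the reference Neumann problem \eqref{eq:1.dir-pbm-1}--\eqref{eq:1.dir-pbm-4}. Since $\hbox{dist}(D,\partial\Omega)\geq d_0\rho_0$, I iterate Caccioppoli-type estimates (in the spirit of a Moser iteration adapted to the Reissner--Mindlin system) to upgrade an $L^2$ bound on the energy density to an $L^\infty$ bound on $D$, obtaining
$$\sup_{D} E_0 \;\leq\; \frac{C}{\rho_0^{2}}\,W_0,$$
with $C$ depending only on $\alpha_0,\alpha_1,\gamma_0,L_0,Q_0,d_0,\rho_0/h$; the normalization \eqref{eq:normalization-0} together with a Korn--Poincar\'e inequality on $\Omega$ is used to relate the zero-order part of $(\varphi_0,w_0)$ to $W_0$. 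Integrating produces $\int_D E_0\leq C\,|D|\,W_0/\rho_0^{2}$.

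The second factor is the delicate point and constitutes the main obstacle of the proof. By the Dirichlet variational principle applied to \eqref{eq:Dir}, $\int_D E_c^-$ is dominated by the energy in $D$ of any $H^1$-extension into $D$ of the trace $(\varphi_c^+,w_c^+)|_{\partial D}$. After subtracting an element of $\mathcal A$ from $(\varphi_c^+,w_c^+)$, which leaves the energy invariant in view of \eqref{eq:Affini}, I build a competitor by a bounded linear extension operator from $H^1(\Omega\setminus\overline D)$ into $H^1(\Omega)$ whose operator norm depends only on the Lipschitz character of $\partial D$. Here the crucial technical ingredient is that, under \eqref{eq:regul_D} together with the scale-invariant fatness condition \eqref{eq:SIFC}, a rescaling $x\mapsto x/r_D$ reduces the estimate to a fixed-shape family of domains, so that the extension constant is \emph{independent of the unknown} $r_D$. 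Combined with the generalized Korn inequality of \cite{l:mrv17} recalled in the Introduction, which controls the full $H^1$ seminorm by the Reissner--Mindlin energy on a Lipschitz domain, this yields
$$\int_{D} E_c^- \;\leq\; C\int_{\Omega\setminus\overline D}E_c^+\;=\;C\,W_c,$$
with $C$ depending only on $\alpha_0,\alpha_1,\gamma_0,L_D,Q_D,\rho_0/h$.

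Combining the three displays produces $(\delta W)^{2}\leq C\,|D|\,\rho_0^{-2}\,W_0\,W_c$ and hence the assertion. The quadratic nature of $\Psi(t)=t^2/(1+t)$ is a direct consequence of the Cauchy--Schwarz pairing of the two integrals over $D$: in contrast to Theorem \ref{theo:size_above_cavity}, where a three-spheres unique-continuation inequality provides pointwise control of $E_0$ on the defect, here the solution inside $D$ is the unknown Dirichlet extension whose energy can only be estimated through the boundary trace, forcing us to pair $\int_D E_0$ with $\int_D E_c^-\leq C\,W_c$ instead of with a pointwise bound. The $r_D$-independent extension/Korn estimate is precisely the step that requires the Lipschitz hypothesis \eqref{eq:regul_D} and the scale-invariant condition \eqref{eq:SIFC}.
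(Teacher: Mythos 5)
Your overall architecture is sound and genuinely different from the paper's: the polarization identity $W_c-W_0=\int_D \mathbb P\nabla\varphi_0\cdot\nabla\varphi_c^- + S(\varphi_0+\nabla w_0)\cdot(\varphi_c^-+\nabla w_c^-)$ is exactly the right-hand equality of Lemma \ref{lem:double_inequality}, the Cauchy--Schwarz pairing, the interior-regularity bound $\int_D E_0\leq C|D|W_0/\rho_0^2$ and the final algebra giving $\Psi$ are all correct (the paper instead integrates by parts and estimates boundary integrals $I_1+I_2+I_3$ over $\partial D$). The genuine gap is in the step you yourself flag as delicate: the claim $\int_D E_c^-\leq CW_c$ with $C$ independent of $r_D$. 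Your competitor extends $(\varphi_c^+,w_c^+)$ after subtracting \emph{only} an element of $\mathcal A$, and you invoke the generalized Korn inequality \eqref{eq:korn_gener} to control the full gradient of $\varphi_c$ by the energy. But \eqref{eq:korn_gener} can only be applied with an admissible constant on a set whose Lipschitz scale is comparable to $r_D$ (a collar of $\partial D$, or $\Omega\setminus\overline D$ itself, whose Lipschitz character is dictated by $\partial D$), and there it gives $\|\nabla\varphi_c\|_{L^2}\leq C\bigl(\|\widehat{\nabla}\varphi_c\|_{L^2}+\tfrac1{r_D}\|\varphi_c+\nabla w_c\|_{L^2}\bigr)$. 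Since the bending energy weights gradients by $h^3$ while the shear energy weights $|\varphi+\nabla w|^2$ by $h$ (see \eqref{eq:convex-S-Lame}, \eqref{eq:convex-P-Lame}), the bending energy of your extension is only bounded by $C\bigl(1+\tfrac{h^2}{r_D^2}\bigr)W_c$. The factor $h^2/r_D^2$ is not controlled by the admissible a priori data ($r_D$ is unknown and may be arbitrarily small), and it does not cancel in your final combination: with $|D|\leq CQ_D^2r_D^2$ it leaves an additive error of order $h^2W_0W_c/\rho_0^2$ in $(\delta W)^2$, so one only obtains $|D|\geq c\rho_0^2\Psi\bigl(\tfrac{\delta W}{W_0}\bigr)-Ch^2$, which is not the statement of the theorem.

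The missing idea is that the normalization must remove the skew part of $\nabla\varphi_c$ as well, not just an element of $\mathcal A$: set $r=b+W(x-x_{\partial D})$ with $W$ the mean skew part of $\nabla\varphi_c$ on the collar, extend $\varphi_c-r$ and $w_c+b\cdot x-a$, and add $r$ back inside $D$. Since $\mathbb P$ acts only on $\widehat{\nabla}$, adding back the skew field costs nothing in bending; the classical second Korn inequality \eqref{eq:korn2} (which carries no $1/r_D$ factor) then bounds the extended bending energy by $CW_c$, and the shear cost of $W(x-x_{\partial D})$ inside $D$ is of order $h|W|^2r_D^2|D|$, which is admissible because $|W|\lesssim r_D^{-2}\rho_0^{-1/2}W_c^{1/2}$ while $|x-x_{\partial D}|\lesssim Q_Dr_D$ --- exactly the cancellation the paper performs on its term $I_3$, see \eqref{eq:proof_cavity_below9}--\eqref{eq:proof_cavity_below10}. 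With that correction your variational-competitor route does go through and reproduces the paper's key estimate $W_c-W_0\leq \tfrac{C}{\rho_0}|D|^{1/2}W_0^{1/2}W_c^{1/2}$; as written, however, the constant in your crucial step depends on the unknown $r_D$.
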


Concerning rigid inclusions, the size estimates are stated in the next two theorems.

\begin{theo}
  \label{theo:size-above-rigid}
  Let $\Omega$ be a bounded domain in $\R^{2}$, such that $\partial \Omega$
is of Lipschitz class with constants $\rho_{0}, L_{0}$ and
satisfying \eqref{eq:Omegabound}. Let $D$ be an open set
satisfying \eqref{eq:connessione} and the \textit{fatness
condition} \eqref{eq:D-fat_cavity}. Let the reference plate be
made by linearly elastic isotropic material with Lam\'{e} moduli
$\lambda$, $\mu$ satisfying \eqref{eq:Lame-ell},
\eqref{eq:Lame-reg}. Let the transverse force field
$\overline{Q}\in H^{-1/2}(\partial \Omega)$ and the couple field
$\overline{M}\in H^{-1/2}(\partial \Omega, \R^2)$ satisfy
\eqref{eq:frequency}.

The following estimate holds
\begin{equation}
  \label{eq:size-above-rigid}
  |D|
  \leq
  K\rho_0^2
  \frac{W_0-W_r}{W_0},
  \end{equation}
where $K$ only depends on $\alpha_{0}$, $\alpha_{1}$,
$\gamma_{0}$, $L_{0}$, $Q_0$, $\frac{\rho_0}{h}$, $h_{1}$ and
${\mathcal F}$.
\end{theo}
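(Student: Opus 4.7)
The statement is the exact analogue of Theorem \ref{theo:size_above_cavity} with cavities replaced by rigid inclusions, and I would follow the same three-step scheme: (i) derive an energy identity bounding the work gap $W_0 - W_r$ from below by the reference strain energy stored inside $D$; (ii) use the quantitative three-spheres inequality for the reference Reissner-Mindlin system of \cite{l:mrv18}, combined with a Vitali covering of $D_{h_1\rho_0}$ and the Fatness Condition \eqref{eq:D-fat_cavity}, to bound this strain energy from below by $C\rho_0^{-2}|D|\,W_0$; (iii) chain the two inequalities. Only step (i) is genuinely new, because the reference pair $(\varphi_0,w_0)$ is \emph{not} admissible as a test field in the rigid weak formulation \eqref{eq:weak-solut-rigid} (it need not be affine on $\overline D$), so the cavity argument cannot be transposed line by line.

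\textbf{Step (i): the energy identity.} I would test the reference weak formulation \eqref{eq:weak} with $(\varphi_r,w_r) \in H^1(\Omega,\R^2) \times H^1(\Omega)$, which \emph{is} admissible, and then rewrite the work gap via the squared $L^2$-norm of the difference of the two solutions on $\Omega \setminus \overline D$. The rigid constraints \eqref{eq:1.dir-pbm-rig-5}--\eqref{eq:1.dir-pbm-rig-6} force $\nabla\varphi_r \equiv 0$ and $\varphi_r + \nabla w_r \equiv 0$ in $\overline D$, so every bulk integrand involving the rigid pair is supported in $\Omega\setminus\overline D$. Using this together with the primal identities $W_0 = \int_\Omega[\mathbb P\nabla\varphi_0\cdot\nabla\varphi_0 + S|\varphi_0+\nabla w_0|^2]$ and $W_r = \int_{\Omega\setminus\overline D}[\mathbb P\nabla\varphi_r^+\cdot\nabla\varphi_r^+ + S|\varphi_r^+ + \nabla w_r^+|^2]$, a direct expansion yields
\begin{equation*}
W_0 - W_r = \int_D [\mathbb{P}\nabla\varphi_0\cdot\nabla\varphi_0 + S|\varphi_0+\nabla w_0|^2] + \int_{\Omega\setminus\overline D} [\mathbb{P}\nabla(\varphi_0-\varphi_r^+)\cdot\nabla(\varphi_0-\varphi_r^+) + S|(\varphi_0-\varphi_r^+) + \nabla(w_0-w_r^+)|^2].
\end{equation*}
Both terms are non-negative by \eqref{eq:convex-S-Lame}--\eqref{eq:convex-P-Lame}, hence dropping the second one gives the key inequality $W_0 - W_r \geq \int_D [\mathbb P\nabla\varphi_0\cdot\nabla\varphi_0 + S|\varphi_0 + \nabla w_0|^2]$.

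\textbf{Steps (ii)--(iii) and main obstacle.} With the previous inequality in hand I would invoke verbatim the propagation-of-smallness argument already prepared for Theorem \ref{theo:size_above_cavity}: the three-spheres inequality of \cite{l:mrv18}, made effective by the frequency-type assumption \eqref{eq:frequency}, produces through a chain of overlapping balls a uniform lower bound of order $W_0$ for $\int_{B_r(x)}[\mathbb P\nabla\varphi_0\cdot\nabla\varphi_0 + S|\varphi_0+\nabla w_0|^2]$ on any ball of radius $r \sim h_1\rho_0$ lying inside $\Omega$. By definition of $D_{h_1\rho_0}$ such balls centered in $D_{h_1\rho_0}$ lie inside $D$; a Vitali covering of $D_{h_1\rho_0}$ by disjoint balls of this radius produces at least $N \gtrsim |D_{h_1\rho_0}|/\rho_0^2 \gtrsim |D|/\rho_0^2$ of them, thanks to the Fatness Condition \eqref{eq:D-fat_cavity}, so summing the three-spheres lower bound over them yields $\int_D[\mathbb P\nabla\varphi_0\cdot\nabla\varphi_0 + S|\varphi_0+\nabla w_0|^2] \geq C\rho_0^{-2}|D|\,W_0$, which combined with Step (i) proves \eqref{eq:size-above-rigid}. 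The only genuinely new work relative to the cavity proof is the identity in Step (i), and the subtle point there is the asymmetric role of the test fields: one must test the \emph{reference} problem with the \emph{rigid} solution, rather than the rigid problem with the reference solution, since the latter is not admissible.
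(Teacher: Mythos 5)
Your proposal is correct and follows essentially the same route as the paper: your Step (i) identity is exactly what the paper's energy lemma (Lemma \ref{lem:double-inequality-rigid}) establishes — the energy of the difference $(\varphi_0-\varphi_r,w_0-w_r)$ over $\Omega$ equals $W_0-W_r$, and its restriction to $D$ reduces to the reference energy because the rigid pair has vanishing strains there — and your Steps (ii)--(iii) reproduce the fatness-plus-Lipschitz-propagation-of-smallness covering argument of Theorem \ref{theo:size_above_cavity}, which is precisely how the paper disposes of Theorem \ref{theo:size-above-rigid}. The only cosmetic difference is that you obtain the cross term $W_r$ by testing the reference weak formulation \eqref{eq:weak} with $(\varphi_r,w_r)$ on all of $\Omega$, while the paper integrates by parts on $\Omega\setminus\overline{D}$ and kills the $\partial D$ boundary term via the rigidity of $(\varphi_r^-,w_r^-)$; these are the same computation in substance.
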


\begin{theo}
  \label{theo:size-below-rigid}
Let $\Omega$ be a bounded domain in $\R^{2}$, such that $\partial
\Omega$ is of Lipschitz class with constants $\rho_{0}, L_{0}$ and
satisfying \eqref{eq:Omegabound}. Let $D$ be a subdomain of
$\Omega$ satisfying \eqref{eq:connessione}, \eqref{eq:regul_D},
\eqref{eq:SIFC}, and such that
\begin{equation}
  \label{eq:rigid-away-boundary}
  \hbox{dist}(D,\partial\Omega)\geq d_0\rho_0,
\end{equation}
with $d_{0}>0$, $r_D<\frac{d_0}{2}\rho_0$. Let the reference plate
be made by linearly elastic isotropic material with Lam\'{e}
moduli $\lambda$, $\mu$ satisfying \eqref{eq:Lame-ell},
\eqref{eq:Lame-reg}. Let the transverse force field
$\overline{Q}\in H^{-1/2}(\partial \Omega)$ and the couple field
$\overline{M}\in H^{-1/2}(\partial \Omega, \R^2)$. The following
estimate holds
\begin{equation}
  \label{eq:size-below-rigid}
  |D|
  \geq
  C\rho_0^2
  \Phi \left( \frac {W_{0}-W_{r}} {W_{0}} \right),
  \end{equation}
where the function $\Phi$ is given by
\begin{equation}
  \label{eq:rigid-Phi}
  [0,1)\ni t \mapsto \Phi(t) = \frac{t^{2}}{1-t},
\end{equation}
and $C>0$ only depends on $\alpha_{0}$, $\alpha_{1}$,
$\gamma_{0}$, $L_{0}$, $Q_0$, $\frac{\rho_0}{h}$, $d_{0}$, $L_D$,
$Q_D$.
\end{theo}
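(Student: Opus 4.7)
The plan is to mimic the strategy used for cavities in Theorem \ref{theo:size_below_cavity} while incorporating the additional ingredient of Rellich-type $L^2$ boundary estimates needed to handle the rigidity constraint on $D$. The variational characterization of $(\varphi_0, w_0)$ as the unconstrained minimizer, and of $(\varphi_r, w_r)$ as the minimizer of the potential energy $J=\tfrac12 a(\cdot\,;\cdot)-L$ over $H_D^1(\Omega,\R^2)\times H_D^1(\Omega)$, together with the vanishing of $J'(\varphi_0,w_0)$ on all of $H^1$, yields both the identity
\[
W_0-W_r=\int_\Omega \mathbb{P}\nabla(\varphi_0-\varphi_r)\cdot\nabla(\varphi_0-\varphi_r)+S|(\varphi_0-\varphi_r)+\nabla(w_0-w_r)|^2
\]
and, by minimality of $(\varphi_r,w_r)$, the complementary upper bound
\[
W_0-W_r\leq\int_\Omega \mathbb{P}|\nabla(\tilde\varphi-\varphi_0)|^2+S|(\tilde\varphi-\varphi_0)+\nabla(\tilde w-w_0)|^2
\]
for every admissible $(\tilde\varphi,\tilde w)\in H_D^1(\Omega,\R^2)\times H_D^1(\Omega)$.

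Next, I construct an efficient test pair. By \eqref{eq:SIFC} and \eqref{eq:rigid-away-boundary}, I fix $P\in D$ and a radius $R$ comparable to $r_D$ so that $D\subset B_R(P)\subset B_{2R}(P)\subset\Omega$. With a smooth cutoff $\eta$ equal to $0$ on $B_R(P)$, equal to $1$ outside $B_{2R}(P)$, and satisfying $|\nabla\eta|\leq C/r_D$, I set
\[
\tilde\varphi:=\eta\,\varphi_0+(1-\eta)\,b,\qquad \tilde w:=\eta\,w_0+(1-\eta)(-b\cdot x+a),
\]
with constants $(a,b)\in\R\times\R^2$ to be chosen. By construction $(\tilde\varphi,\tilde w)\in H_D^1(\Omega,\R^2)\times H_D^1(\Omega)$, since on $D$ it reduces to $(b,-b\cdot x+a)\in\mathcal{A}$. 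Expanding the upper bound above and using the generalized Korn inequality of \cite{l:mrv17} on $B_{2R}(P)$ to pick $(a,b)$ optimally and absorb zeroth-order terms, I arrive at the preliminary estimate
\[
W_0-W_r\leq C\int_{B_{2R}(P)}\big[\mathbb{P}|\nabla\varphi_0|^2+S|\varphi_0+\nabla w_0|^2\big].
\]

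The decisive step, and the main obstacle, is to promote this local-energy bound --- which is linear in the reference-solution energy near $D$ --- into the quadratic form dictated by $\Phi(t)=t^2/(1-t)$. Following the strategy of \cite{l:fkp}, \cite{l:kp} adapted to the Reissner-Mindlin system, I derive an $L^2$ Rellich boundary estimate of the form
\[
\int_{\partial D}\big[|\mathbb{P}\nabla\varphi_r^+\cdot n|^2+|S(\varphi_r^++\nabla w_r^+)\cdot n|^2\big]\,dS\leq \frac{C}{r_D}\,W_r,
\]
via multiplier identities of the type \cite{l:r}, \cite{l:pw}. Rewriting the energy gap $W_0-W_r$ through integration by parts as a boundary integral on $\partial D$ pairing the tractions of $(\varphi_r^+,w_r^+)$ with the displacement differences $(\varphi_0-b,\,w_0+b\cdot x-a)$, and then applying Cauchy-Schwarz, the Rellich estimate for the traction factors, and a trace/Poincar\'e-Korn estimate for the displacement factors --- which, by interior regularity of $(\varphi_0,w_0)$ on balls of radius $\sim r_D$ lying at distance $\sim\rho_0$ from $\partial\Omega$, gives $\|\varphi_0-b\|_{L^2(\partial D)}^2+\|w_0+b\cdot x-a\|_{L^2(\partial D)}^2\leq C r_D^3\rho_0^{-2}W_0$ --- yields
\[
(W_0-W_r)^2\leq C\,\frac{r_D^2}{\rho_0^2}\,W_0\,W_r.
\]
Since SIFC combined with the Lipschitz regularity of $\partial D$ forces $|D|\geq c\,r_D^2$, rearranging produces the required
\[
|D|\geq C\rho_0^2\,\frac{(W_0-W_r)^2}{W_0\,W_r}=C\rho_0^2\,\Phi\!\left(\frac{W_0-W_r}{W_0}\right).
\]
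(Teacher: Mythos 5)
Your argument is, in substance, the paper's own proof: the gap $W_{0}-W_{r}$ is expressed as a boundary integral on $\partial D$ of the tractions of $(\varphi_r^+,w_r^+)$ against $(\varphi_0,w_0)$ with a rigid pair subtracted (legitimate thanks to the balance conditions \eqref{eq:rigid-1-10}--\eqref{eq:rigid-1-11}), then estimated by Cauchy--Schwarz using the Rellich-type $L^2(\partial D)$ traction bound --- which is exactly Proposition \ref{prop:size-below-rigid-L2-contact-actions}, whose proof you only assert via Rellich/Korn technology, much as the paper defers it to its final section --- together with interior regularity and the generalized Korn inequality for $(\varphi_0,w_0)$, and finally $|D|\geq c\,r_D^2$ and elementary algebra yield $\Phi$. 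The opening variational/cutoff construction is superfluous and is anyway discarded (as written its constant would degenerate for small $r_D$ because of the $r_D^{-2}$ cutoff terms), so the effective proof coincides with the paper's.
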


\begin{rem}
\label{rem:non_serve_isotropia} Let us notice that the estimates
{}from below stated in Theorems \ref{theo:size_below_cavity} and
\ref{theo:size-below-rigid} hold for the general context of
anisotropic plates with bounded coefficients satisfying the strong
convexity assumption, since unique continuation estimates are not
needed for the proofs of these theorems.

Moreover, Theorems \ref{theo:size_below_cavity} and
\ref{theo:size-below-rigid} can be extended to the case when $D$
is made of a finite \emph{unknown} number of connected components.
Precisely, it suffices to assume that $D=\cup_{j=1}^J D_j$,
$j=1,...,J$, with $\Omega\setminus\overline{D}$ connected,
$\partial D_j$ of Lipschitz class with constant $r_j$, $L_D$, such
that $\hbox{diam}(D_j)\leq Q_D r_j$, $\hbox{dist}(D_i,D_j)\geq
\frac{3}{2}(r_i+r_j)$, $r_j\leq \frac{d_0}{2}\rho_0$. The proofs
can be extended to this general case by applying the same
arguments to each connected component $D_j$ taking care to replace
the integrals over $\Omega\setminus \overline{D}$ with integrals
over a neighborhood of $\partial D_j$ in $\Omega\setminus
\overline{D}$, by summing up the estimates obtained for each $j$,
and by applying the Cauchy-Schwarz inequality.
\end{rem}

\section{Proof of the size estimates for cavities}
\label{SE-proof_cavities}

The proofs of both Theorems \ref{theo:size_above_cavity}, \ref{theo:size_below_cavity} are based on the following energy lemma.

\begin{lem}
   \label{lem:double_inequality}
    Let $\Omega$ be a bounded domain in $\R^2$ and $D \subset \subset \Omega$ a measurable set. Let $S$, $\mathbb P$ given in
\eqref{eq:shearing-tensor}, \eqref{eq:bending-tensor} satisfy the
strong convexity conditions \eqref{eq:Lame-ell}. Let $(\varphi_c, w_c) \in
H^1(\Omega\setminus \overline{D},\R^2) \times H^1(\Omega\setminus \overline{D})$,
$(\varphi_0, w_0) \in H^1(\Omega,
\R^2) \times H^1(\Omega)$ be the weak solutions to problems \eqref{eq:1.dir-pbm-cav-1}--\eqref{eq:1.dir-pbm-cav-6} and \eqref{eq:1.dir-pbm-1}--\eqref{eq:1.dir-pbm-4}, respectively. We
have

\begin{multline}
  \label{eq:double_inequality}
\int_{D} {\mathbb P}\nabla \varphi_0\cdot \nabla \varphi_0 + \int_{D} S(\varphi_0+\nabla w_0)\cdot
        (\varphi_0+\nabla w_0)
    \leq W_c-W_0=\\
    =\int_{D} {\mathbb P}\nabla \varphi_c\cdot \nabla \varphi_0 + \int_{D} S(\varphi_c+\nabla w_c)\cdot
        (\varphi_0+\nabla w_0).
\end{multline}
    \end{lem}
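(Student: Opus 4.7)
The argument is purely variational and rests on the symmetry of the bilinear form
\[
a_U\bigl((\varphi,w),(\psi,v)\bigr) = \int_U \mathbb P\,\nabla\varphi\cdot\nabla\psi + \int_U S(\varphi+\nabla w)\cdot(\psi+\nabla v),
\]
which is immediate because $S$ is scalar and because $\mathbb P A\cdot B = B\bigl[(1-\nu)\widehat{A}\cdot\widehat{B} + \nu\,\mathrm{tr}(A)\,\mathrm{tr}(B)\bigr]$ is visibly symmetric in $A,B$. Writing $u_0=(\varphi_0,w_0)$ and $u_c=(\varphi_c,w_c)$, the two starting identities are obtained by testing each weak formulation with its own solution: $W_0=a_\Omega(u_0,u_0)$ and $W_c=a_{\Omega\setminus\overline D}(u_c,u_c)$. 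Testing moreover \eqref{eq:weak_cav} with $u_0|_{\Omega\setminus\overline D}$ in place of $(\psi,v)$ gives a third identity, $a_{\Omega\setminus\overline D}(u_c,u_0)=W_0$.

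For the equality half of \eqref{eq:double_inequality} I would extend $u_c$ across $D$ to an element of $H^1(\Omega,\R^2)\times H^1(\Omega)$ via the Dirichlet problem \eqref{eq:Dir}, and use this extension as test pair in the weak formulation \eqref{eq:weak} for the reference plate. Because the extension agrees with the original $u_c$ on $\partial\Omega$, the right-hand side evaluates to $W_c$, producing $a_\Omega(u_0,u_c)=W_c$. Splitting $a_\Omega = a_{\Omega\setminus\overline D} + a_D$ and invoking symmetry to rewrite $a_{\Omega\setminus\overline D}(u_0,u_c)=a_{\Omega\setminus\overline D}(u_c,u_0)=W_0$, I obtain $a_D(u_0,u_c) = W_c - W_0$, which, after a final application of symmetry to the two integrands, is exactly the equality in \eqref{eq:double_inequality}.

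For the lower bound I would exploit the nonnegativity of $a_{\Omega\setminus\overline D}$ on every admissible pair, which follows from \eqref{eq:convex-S-Lame}--\eqref{eq:convex-P-Lame}. Expanding
\[
0 \leq a_{\Omega\setminus\overline D}(u_c-u_0,u_c-u_0) = a_{\Omega\setminus\overline D}(u_c,u_c) - 2\,a_{\Omega\setminus\overline D}(u_c,u_0) + a_{\Omega\setminus\overline D}(u_0,u_0),
\]
and substituting $a_{\Omega\setminus\overline D}(u_c,u_c)=W_c$, $a_{\Omega\setminus\overline D}(u_c,u_0)=W_0$, and $a_{\Omega\setminus\overline D}(u_0,u_0)=a_\Omega(u_0,u_0)-a_D(u_0,u_0)=W_0-a_D(u_0,u_0)$, one gets $W_c - W_0 \geq a_D(u_0,u_0)$, i.e.\ the first inequality of \eqref{eq:double_inequality}. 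The only point requiring some care is to check that every quantity manipulated is insensitive to the $\mathcal A$-ambiguity of the solutions; this is guaranteed by the compatibility conditions \eqref{eq:compatibilita-carico} together with Remark~\ref{rem:works_independent}, so one can freely fix representatives. No isotropy or unique continuation is invoked, consistently with the fact that this single lemma feeds both the upper- and lower-estimate theorems.
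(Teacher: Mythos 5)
Your proposal is correct and rests on essentially the same variational argument as the paper: the cross identities $a_{\Omega\setminus\overline D}(u_c,u_0)=W_0$ and $a_{\Omega}(u_0,u_c)=W_c$ (equivalently $a_D(u_0,u_c)=W_c-W_0$, the paper's \eqref{eq:double_ineq3}, \eqref{eq:double_ineq5}, \eqref{eq:double_ineq7}) combined with the nonnegativity of the strain energy of $u_c-u_0$ on $\Omega\setminus\overline D$, which is precisely the paper's \eqref{eq:double_ineq11}. The only difference is one of bookkeeping: you obtain these identities by testing the weak formulations \eqref{eq:weak_cav} and \eqref{eq:weak} directly with $u_0$ and with the extension of $(\varphi_c,w_c)$ given by \eqref{eq:Dir}, whereas the paper derives them through integration by parts and boundary integrals over $\partial D$.
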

\begin{proof}
For every weak solution $(\varphi,w) \in H^1(\Omega \setminus \overline{D}, \R^2)\times
H^1(\Omega\setminus \overline{D})$ to the system \eqref{eq:1.dir-pbm-cav-1}--\eqref{eq:1.dir-pbm-cav-2}, we have that
\begin{multline}
  \label{eq:double_ineq1}
  \int_{\Omega\setminus \overline{D}} {\mathbb P}\nabla \varphi\cdot \nabla \psi + \int_{\Omega\setminus \overline{D}} S(\varphi+\nabla w)\cdot
        (\psi+\nabla v)=\\
  =\int_{\partial\Omega}(S(\varphi + \nabla w)\cdot n)v
    +({\mathbb P} \nabla \varphi)n\cdot \psi
    -\int_{\partial D}(S(\varphi + \nabla w)\cdot n)v
    +({\mathbb P} \nabla \varphi)n\cdot \psi,
\end{multline}
for every $\psi\in H^1(\Omega \setminus \overline{D}, \R^2)$ and
for every $v\in H^1(\Omega \setminus \overline{D})$, where $n$
denotes the exterior unit normal to $\Omega$ and $D$,
respectively.

Choosing in the above identity $(\varphi,w)=(\varphi_0,w_0)$, $(\psi,v)=(\varphi_c,w_c)$, we have
\begin{multline}
  \label{eq:double_ineq2}
\int_{\Omega\setminus \overline{D}} {\mathbb P}\nabla \varphi_0\cdot \nabla \varphi_c + \int_{\Omega\setminus \overline{D}} S(\varphi_0+\nabla w_0)\cdot
        (\varphi_c+\nabla w_c)=\\
  =W_c-\int_{\partial D}(S(\varphi_0 + \nabla w_0)\cdot n)w_c
    +({\mathbb P} \nabla \varphi_0)n\cdot \varphi_c.
\end{multline}
Similarly, choosing in \eqref{eq:double_ineq1} $(\varphi,w)=(\varphi_c,w_c)$, $(\psi,v)=(\varphi_0,w_0)$ and recalling the boundary conditions \eqref{eq:1.dir-pbm-cav-5}--\eqref{eq:1.dir-pbm-cav-6}, we have
\begin{equation}
  \label{eq:double_ineq3}
\int_{\Omega\setminus \overline{D}} {\mathbb P}\nabla \varphi_c\cdot \nabla \varphi_0 + \int_{\Omega\setminus \overline{D}} S(\varphi_c+\nabla w_c)\cdot (\varphi_0+\nabla w_0)
        =W_0.
\end{equation}
By subtracting \eqref{eq:double_ineq3} {}from \eqref{eq:double_ineq2},
\begin{equation}
  \label{eq:double_ineq4}
W_c-W_0=\int_{\partial D}(S(\varphi_0 + \nabla w_0)\cdot n)w_c
    +({\mathbb P} \nabla \varphi_0)n\cdot \varphi_c.
\end{equation}
On the other hand, by the weak formulation of the system \eqref{eq:1.dir-pbm-1}--\eqref{eq:1.dir-pbm-2} in $D$, recalling the transmission conditions in \eqref{eq:Dir} for $(\varphi_c,w_c)$ and by \eqref{eq:double_ineq4}, it follows that
\begin{equation}
  \label{eq:double_ineq5}
\int_{D} {\mathbb P}\nabla \varphi_0\cdot \nabla \varphi_c + \int_{D} S(\varphi_0+\nabla w_0)\cdot
        (\varphi_c+\nabla w_c)=W_c-W_0,
    \end{equation}
that is the equality in \eqref{eq:double_inequality} is established.

Choosing in \eqref{eq:double_ineq1} $(\varphi,w)=(\psi,v)=(\varphi_c-\varphi_0,w_c-w_0)$, recalling that $(\varphi_c,w_c)$ and $(\varphi_0,w_0)$ satisfy the same Neumann conditions on $\partial\Omega$ and that $(\varphi_c,w_c)$ satisfies homogeneous Neumann conditions on $\partial D$, we have
\begin{multline}
  \label{eq:double_ineq6}
\int_{\Omega \setminus \overline{D}}{\mathbb P}\nabla (\varphi_c-\varphi_0)\cdot \nabla (\varphi_c-\varphi_0) + \\
+\int_{\Omega\setminus \overline{D}} S((\varphi_c-\varphi_0)+\nabla (w_c-w_0))\cdot ((\varphi_c-\varphi_0)+\nabla (w_c-w_0))=\\
=\int_{\partial D}(S(\varphi_0 + \nabla w_0)\cdot n)(w_c-w_0)
    +({\mathbb P} \nabla \varphi_0)n\cdot (\varphi_c-\varphi_0).
\end{multline}
Summing \eqref{eq:double_ineq3} and \eqref{eq:double_ineq5}, we obtain
\begin{equation}
  \label{eq:double_ineq7}
\int_{\Omega} {\mathbb P}\nabla \varphi_0\cdot \nabla \varphi_c + \int_{\Omega} S(\varphi_0+\nabla w_0)\cdot
        (\varphi_c+\nabla w_c)=W_c.
    \end{equation}
Subtracting \eqref{eq:W_0} {}from \eqref{eq:double_ineq7} and recalling \eqref{eq:double_ineq5}, we have
\begin{multline}
  \label{eq:double_ineq8}
\int_{\Omega} {\mathbb P}\nabla \varphi_0\cdot \nabla (\varphi_c-\varphi_0) + \int_{\Omega} S(\varphi_0+\nabla w_0)\cdot
        ((\varphi_c-\varphi_0)+\nabla (w_c-w_0))=\\
                =W_c-W_0=\int_{D} {\mathbb P}\nabla \varphi_0\cdot \nabla \varphi_c + \int_{D} S(\varphi_0+\nabla w_0)\cdot
        (\varphi_c+\nabla w_c).
    \end{multline}
By splitting the domain of integration on the left hand side of \eqref{eq:double_ineq8} into the union of $\Omega\setminus \overline{D}$ and $D$, the following identity easily follows
\begin{multline}
  \label{eq:double_ineq9}
\int_{\Omega\setminus \overline{D}} {\mathbb P}\nabla (\varphi_c-\varphi_0)\cdot \nabla \varphi_0 + \int_{\Omega\setminus \overline{D}} S((\varphi_c-\varphi_0)+\nabla (w_c-w_0))\cdot (\varphi_0+\nabla w_0)
        =\\
                =\int_{D} {\mathbb P}\nabla \varphi_0\cdot \nabla \varphi_0 + \int_{D} S(\varphi_0+\nabla w_0)\cdot
        (\varphi_0+\nabla w_0).
    \end{multline}

By adding and subtracting to the left hand side of \eqref{eq:double_ineq9} the term $\int_{\Omega\setminus \overline{D}} {\mathbb P}\nabla \varphi_c\cdot \nabla (\varphi_c-\varphi_0) + \int_{\Omega\setminus \overline{D}} S(\varphi_c+\nabla w_c)\cdot
        ((\varphi_c-\varphi_0)+\nabla (w_c-w_0))$ and recalling \eqref{eq:W_c} and \eqref{eq:double_ineq3}, we derive
\begin{multline}
  \label{eq:double_ineq11}
  \int_{D} {\mathbb P}\nabla \varphi_0\cdot \nabla \varphi_0 + \int_{D} S(\varphi_0+\nabla w_0)\cdot
        (\varphi_0+\nabla w_0)=\\
                =W_c-W_0-
    \int_{\Omega\setminus \overline{D}} {\mathbb P}\nabla (\varphi_c-\varphi_0)\cdot \nabla (\varphi_c-\varphi_0) - \\
    -\int_{\Omega\setminus \overline{D}} S((\varphi_c-\varphi_0)+\nabla (w_c-w_0))\cdot
        ((\varphi_c-\varphi_0)+\nabla (w_c-w_0))
    \leq W_c-W_0,
\end{multline}
that is the inequality in \eqref{eq:double_inequality}.
\end{proof}

It is convenient to introduce the strain energy
density
\begin{equation}
  \label{eq:energy}
   E(\varphi_0,w_0) = \left(|\widehat{\nabla} \varphi_0|^2 +\frac{1}{\rho^2_0} |\varphi_0 + \nabla
  w_0|^2\right)^\frac{1}{2}.
\end{equation}
Let us notice that, by \eqref{eq:convex-S-Lame}--\eqref{eq:constants_dependence},
the following double inequality holds
\begin{equation}
  \label{eq:E_double}
     m\rho_0^3E^2(\varphi_0, w_0)\leq {\mathbb P}\nabla \varphi_0\cdot \nabla \varphi_0
        + S(\varphi_0+\nabla w_0)\cdot
        (\varphi_0+\nabla w_0)\leq M \rho_0^3E^2(\varphi_0, w_0),
\end{equation}
where $m=\min\left\{\frac{\xi_0}{12}\left(\frac{h}{\rho_0}\right)^3,\alpha_0\frac{h}{\rho_0}\right\}$,
$M=\max\left\{\frac{\xi_1}{12}\left(\frac{h}{\rho_0}\right)^3,\sigma_1\frac{h}{\rho_0}\right\}$ only depend on $\alpha_0$, $\alpha_1$, $\gamma_0$ and $\frac{\rho_0}{h}$.

The second key tool for proving Theorem \ref{theo:size_above_cavity} is the following unique continuation property for solutions to \eqref{eq:1.dir-pbm-1}--\eqref{eq:1.dir-pbm-4}.

\begin{prop} [Lipschitz propagation of smallness]
   \label{theo:LPS}
Under the assumptions of Theorem \ref{theo:size_above_cavity},
for every $\rho>0$ and for every
$x\in\Omega_{\frac{7}{2\theta}\rho}$, we have
\begin{equation}
  \label{eq:LPS}
     \int_{B_{\rho}(x)} E^2(\varphi_0, w_0)\geq C_\rho\int_{\Omega} E^2(\varphi_0, w_0),
\end{equation}
where $C_\rho$ only depends on $\alpha_0$, $\alpha_1$, $\gamma_0$,
$\frac{\rho_0}{h}$, $L_0$, $Q_0$, ${\mathcal F}$,
$\frac{\rho}{\rho_0}$, and where $\theta\in (0,1)$ only depends on
$\alpha_0,\alpha_1,\gamma_0, \frac{\rho_0}{h}$.
\end{prop}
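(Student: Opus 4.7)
The plan is to deduce the Lipschitz propagation of smallness from the quantitative three-spheres inequality for solutions of the reference Reissner-Mindlin system established in \cite{l:mrv18}, combined with an iterated chain-of-balls argument and a global normalization via the frequency $\mathcal F$. The three-spheres inequality provides, for concentric balls $B_{r_1}(y) \subset B_{r_2}(y) \subset B_{r_3}(y) \subset \Omega$ with fixed ratios,
\[
\int_{B_{r_2}(y)} E^2(\varphi_0,w_0) \leq C \left( \int_{B_{r_1}(y)} E^2(\varphi_0,w_0) \right)^{\theta} \left( \int_{B_{r_3}(y)} E^2(\varphi_0,w_0) \right)^{1-\theta},
\]
with $\theta \in (0,1)$ depending only on $\alpha_0,\alpha_1,\gamma_0,\rho_0/h$; this is precisely the $\theta$ appearing in the statement, and the interior restriction $x \in \Omega_{\frac{7}{2\theta}\rho}$ is exactly what makes admissible a triple of balls of radii $\rho,\ 2\rho,\ \frac{7}{2\theta}\rho$ centered at $x$.

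Fixing such an $x$ and any $y \in \Omega_{\frac{7}{2\theta}\rho}$, I would construct a polygonal path from $x$ to $y$ staying in $\Omega_{\frac{7}{2\theta}\rho}$ (possible by the connectedness and Lipschitz regularity of $\Omega$), of length controlled by $\mathrm{diam}(\Omega)\leq Q_0\rho_0$, and cover it by $N \sim \rho_0/\rho$ overlapping balls of radius $\rho$ with consecutive centers at distance at most $\rho$. At each step the three-spheres inequality applied to the admissible triple of radii centered at the step's point yields $\int_{B_\rho(x_{k+1})} E^2 \leq C(\int_{B_\rho(x_k)} E^2)^{\theta}(\int_\Omega E^2)^{1-\theta}$; iterating $N$ times produces
\[
\int_{B_\rho(y)} E^2 \leq C^{\star} \left( \int_{B_\rho(x)} E^2 \right)^{\theta^N} \left( \int_\Omega E^2 \right)^{1-\theta^N}.
\]
A finite covering of $\Omega_{\frac{7}{2\theta}\rho}$ by such balls gives the analogous bound with $\int_{\Omega_{\frac{7}{2\theta}\rho}} E^2$ on the left, and the missing layer $\Omega \setminus \Omega_{\frac{7}{2\theta}\rho}$ is handled via the Lipschitz regularity of $\partial \Omega$ together with a Caccioppoli-type inequality near the boundary, allowing one to replace the left-hand side by $\int_\Omega E^2$ up to an admissible multiplicative constant.

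To extract a true lower bound and dispose of the free factor $(\int_\Omega E^2)^{1-\theta^N}$, I would invoke the frequency condition \eqref{eq:frequency}: combining the weak formulation \eqref{eq:weak} with the Korn-Poincar\'e inequality for pairs $(\varphi,w)$ yields the two-sided estimate
\[
c\, \|(\overline M, \rho_0\overline Q)\|^{2}_{H^{-1/2}(\partial\Omega)} \leq \rho_0^3 \int_\Omega E^2(\varphi_0,w_0) \leq C\, \|(\overline M, \rho_0\overline Q)\|^{2}_{H^{-1/2}(\partial\Omega)},
\]
while $\mathcal F$ quantitatively bounds $\|\cdot\|_{H^{-1/2}}$ from below by $\|\cdot\|_{H^{-1}}$; this two-sided normalization lets one rearrange the interpolated inequality into the claimed $\int_{B_\rho(x)} E^2 \geq C_\rho \int_\Omega E^2$, with $C_\rho$ depending only on the admitted parameters. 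The principal technical obstacle is the propagation exponent $\theta^N$, which degenerates like $\theta^{\rho_0/\rho}$ as $\rho \to 0$ and forces the expected deterioration of $C_\rho$ in $\rho/\rho_0$; every multiplicative constant produced along the chain and in the boundary layer must be tracked so that the final dependence is indeed only on $\alpha_0,\alpha_1,\gamma_0,\rho_0/h,L_0,Q_0,\mathcal F$ and $\rho/\rho_0$. Without the frequency bound the interpolated inequality could not be turned into a one-sided comparison with an explicit constant.
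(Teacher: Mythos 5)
You should first be aware that the paper does not prove this proposition at all: it is quoted verbatim from \cite[Theorem 4.5]{l:mrv18}, so there is no in-text argument to compare with. Your overall strategy (quantitative three spheres inequality for the Reissner--Mindlin system, chain of balls, covering of an interior subdomain, and use of the frequency bound $\mathcal F$) is indeed the strategy of that reference, and the chain/covering part of your sketch is sound modulo the usual bookkeeping (normalizing by $\int_\Omega E^2$ before iterating, counting $N\sim\rho_0/\rho$, which produces the expected degeneration $\theta^{N}$ of the exponent).

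There is, however, a genuine gap at the decisive step. After the chain and covering you have, schematically,
\begin{equation*}
\int_{\Omega_{\frac{7}{2\theta}\rho}}E^2(\varphi_0,w_0)\;\leq\; C\Bigl(\int_{B_\rho(x)}E^2\Bigr)^{s}\Bigl(\int_\Omega E^2\Bigr)^{1-s},\qquad s=\theta^{N},
\end{equation*}
and to conclude you must bound the \emph{interior} energy from below by a definite fraction of the \emph{total} energy, $\int_{\Omega_{\frac{7}{2\theta}\rho}}E^2\geq c_\rho\int_\Omega E^2$. You dismiss this as ``Lipschitz regularity plus a Caccioppoli-type inequality near the boundary,'' but a Caccioppoli inequality goes the wrong way: it cannot prevent the energy from concentrating in the boundary layer, and for highly oscillatory Neumann data it genuinely does concentrate there --- which is exactly the scenario the frequency hypothesis is designed to exclude. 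Your actual use of $\mathcal F$ does not fill this hole: the two-sided equivalence $c\,\|(\overline M,\rho_0\overline Q)\|^2_{H^{-1/2}}\leq\rho_0^3\int_\Omega E^2\leq C\,\|(\overline M,\rho_0\overline Q)\|^2_{H^{-1/2}}$ involves only the total energy on both sides, so it cancels out of the interpolated inequality and gives nothing; moreover the useful direction of \eqref{eq:frequency} is $\|\cdot\|_{H^{-1/2}}\leq\mathcal F\|\cdot\|_{H^{-1}}$ (boundedness of the frequency), not the trivial reverse inequality you invoke. What is missing is an estimate that couples the $H^{-1}$ norm of the data (or, equivalently, the total energy via $\mathcal F$) to the energy on an interior subdomain at a fixed scale proportional to $\rho_0$ --- obtained in \cite{l:mrv18} through trace/extension estimates and the Poincar\'e--Korn inequalities for the pair $(\varphi_0,w_0)$, together with a separate treatment of the regimes $\rho$ small and $\rho$ comparable to $\rho_0$. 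Without an argument of this kind, the free factor $\bigl(\int_\Omega E^2\bigr)^{1-s}$ cannot be absorbed and the proof does not close.
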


The above proposition was established in \cite[Theorem 4.5]{l:mrv18}.

\begin{proof}[Proof of Theorem \ref{theo:size_above_cavity}]
Let us cover $D_{h_{1}\rho_0}$ with internally non overlapping closed
squares $Q_{j}$ of side $l$, for $j=1,...,J$, with $l=\frac
{4\theta h_1}{2\sqrt 2\theta +7}\rho_0$, where $\theta\in (0,1)$ is as
in Proposition \ref{theo:LPS}. By the choice of $l$ the squares
$Q_{j}$ are contained in $D$. Hence
\begin{equation}
  \label{eq:proof_cavity_above1}
  \int_{D} E^2(\varphi_0,w_0)
  \geq
  \int_{\bigcup_{j=1}^{J} Q_{j}} E^2(\varphi_0,w_0)
  \geq
  \frac {|D_{h_{1}\rho_0}|}{l ^{2}}
  \int_{Q_{\bar {j}}} E^2(\varphi_0,w_0),
\end{equation}
where $\bar {j}$ is such that $\int_{Q_{\bar {j}}}
E^2(\varphi_0,w_0)=\min_{j}\int_{Q_{j}} E^2(\varphi_0,w_0)$. Let
$\bar {x}$ be the center of $Q_{\bar {j}}$. By applying estimate
\eqref{eq:LPS} with $x=\bar {x}$ and $\rho=l/2$ and using
\eqref{eq:proof_cavity_above1}  and \eqref{eq:D-fat_cavity}
we have
\begin{equation}
  \label{eq:proof_cavity_above2}
  \int_{D}E^2(\varphi_0,w_0)
  \geq
  C \frac{|D|}{\rho_0^2} \int_{\Omega}E^2(\varphi_0,w_0),
\end{equation}
where $C$ only depends on $\alpha_{0}$, $\alpha_{1}$,
$\gamma_{0}$, $L_{0}$, $Q_0$, $\frac{\rho_0}{h}$, $h_{1}$
and ${\mathcal F}$.

{}From the left hand side of \eqref{eq:double_inequality}, \eqref{eq:E_double}, \eqref{eq:proof_cavity_above2} and \eqref{eq:W_0}, we have
\begin{equation}
  \label{eq:proof_cavity_above3}
  W_c-W_0\geq m\rho_0^3\int_{D}E^2(\varphi_0,w_0)
  \geq
  C \rho_0|D|\int_{\Omega}E^2(\varphi_0,w_0)\geq C \frac{|D|}{\rho_0^2}W_0,
\end{equation}
with $C$ only depending on $\alpha_{0}$, $\alpha_{1}$,
$\gamma_{0}$, $L_{0}$, $Q_0$, $\frac{\rho_0}{h}$, $h_{1}$
and ${\mathcal F}$, so that \eqref{eq:size_above_cavity} follows.
\end{proof}

Let us premise some auxiliary propositions concerning Poincar\'{e} and Korn inequalities, which will be used for the proof of Theorem \ref{theo:size_below_cavity}. In the following three propositions $G$ is meant to be a bounded measurable domain in $\R^2$ having boundary of Lipschitz  class with constants $\rho$ and $L$ and satisfying
\begin{equation}
  \label{eq:diam_G}
  \hbox{diam}(G)\leq Q\rho.
  \end{equation}

Given $u \in H^{1}(G)$ and given $\Gamma \subset \partial G$, we shall denote
\begin{equation}
   \label{eq:medie}
   u_{G}=\frac{1}{| G |} \int_{G} u,
   \quad \quad
   u_{\Gamma}=\frac{1}{| \Gamma |} \int_{\Gamma}
   u.
\end{equation}
\medskip
\begin{prop} [Poincar\'{e}-type inequalities]
  \label{prop:Poinc}
For every $u \in H^{1}(G)$
  we have
\begin{equation}
   \label{eq:PoincG-a}
   \int_{G}|u-u_{G}|^{2} \leq C_{1}\rho^{2} \int_{G} | \nabla u
   |^{2},
\end{equation}
\begin{equation}
   \label{eq:PoincG-a-bis}
   \int_{G}|u-u_{\Gamma}|^{2} \leq C_{2} \left ( 1 + \frac{|G|}{\rho |\Gamma|}   \right ) \rho^2 \int_{G} | \nabla u
   |^{2},
\end{equation}
where $C_{1}$ and $C_2$ only depend on $L$, $Q$.

Moreover, if $u \in H^{1}(G^{\rho})$ then
\begin{equation}
   \label{eq:PoincGr}
   \int_{\partial G}|u-u_{\partial G}|^{2} \leq C_{3}\rho \int_{G^{\rho}} | \nabla u
   |^{2},
\end{equation}
where $C_{3}>0$ only depends on $L$, $Q$.
\end{prop}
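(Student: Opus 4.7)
The plan is to prove the three inequalities in order, since \eqref{eq:PoincG-a-bis} and \eqref{eq:PoincGr} will both build on \eqref{eq:PoincG-a} (or a local analogue) and a standard trace inequality.

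For the classical Poincar\'e inequality \eqref{eq:PoincG-a}, I would first rescale to the case $\rho = 1$ by setting $\tilde u(y) = u(\rho y)$ on $\tilde G = \rho^{-1} G$: the new domain keeps the Lipschitz constant $L$ and satisfies $\mathrm{diam}(\tilde G) \leq Q$. I then argue by compactness and contradiction. If no uniform $C_1 = C_1(L, Q)$ existed, there would be a sequence $(G_n, u_n)$ with $(u_n)_{G_n} = 0$, $\|u_n\|_{L^2(G_n)} = 1$, and $\|\nabla u_n\|_{L^2(G_n)} \to 0$. Ascoli--Arzel\`a applied to the Lipschitz graphs defining $\partial G_n$ yields, up to rigid motion, a subsequential limit $G_\infty$ in the same class, and Rellich--Kondrachov extracts a further subsequence $u_n \to u_\infty$ in $L^2$ with $\nabla u_\infty = 0$ and zero mean, hence $u_\infty \equiv 0$, contradicting unit norm.

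For \eqref{eq:PoincG-a-bis}, I would decompose
\[
  \int_G |u - u_\Gamma|^2 \leq 2 \int_G |u - u_G|^2 + 2|G|\,|u_G - u_\Gamma|^2,
\]
and use Cauchy--Schwarz to bound $|u_G - u_\Gamma|^2 \leq |\Gamma|^{-1} \int_\Gamma |u - u_G|^2$. The surface integral is then controlled by the standard trace inequality $\int_{\partial G} |v|^2 \leq (C/\rho) \int_G |v|^2 + C \rho \int_G |\nabla v|^2$ applied to $v = u - u_G$; invoking \eqref{eq:PoincG-a} on the right collapses it to $C \rho \int_G |\nabla u|^2$. Substituting back produces exactly the factor $1 + |G|/(\rho |\Gamma|)$.

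The delicate estimate is \eqref{eq:PoincGr}, since $u$ lives only on the exterior strip $G^\rho$ and its trace on $\partial G$ must be controlled from outside. My plan is to cover $\partial G$ by finitely many ($N = N(L,Q)$) Lipschitz charts of the form $R_{\rho, L\rho}(P_k)$; in each, after a rigid motion, $\partial G$ is the graph $x_2 = \psi_k(x_1)$ and $G^\rho$ locally contains the one-sided strip $\{|x_1| < \rho,\ \psi_k(x_1) - c\rho < x_2 < \psi_k(x_1)\}$ for some $c = c(L) > 0$. Writing $v(x_1, \psi_k(x_1)) = v(x_1, \psi_k(x_1) - t) + \int_{\psi_k(x_1) - t}^{\psi_k(x_1)} \partial_{x_2} v\, ds$, squaring, and averaging over $t \in (0, c\rho)$ produces a local trace-type estimate
\[
  \int_{\partial G \cap U_k} |v|^2 \, d\sigma \leq \frac{C}{\rho} \int_{G^\rho \cap V_k} |v|^2 + C \rho \int_{G^\rho \cap V_k} |\nabla v|^2,
\]
which, upon summation over $k$ and application to $v = u - u_{\partial G}$, reduces the problem to bounding $\int_{G^\rho} |u - u_{\partial G}|^2$ by $\rho^2 \int_{G^\rho} |\nabla u|^2$. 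The main obstacle is precisely this last step: it requires knowing that $G^\rho$ is itself a Lipschitz domain with constants depending only on $L$ and $Q$, so that \eqref{eq:PoincG-a-bis} applies on $G^\rho$ with $\Gamma = \partial G$ (the factor $|G^\rho|/(\rho |\partial G|)$ being controlled by $L$ and $Q$). Verifying the uniform Lipschitz regularity of the shell $G^\rho$ and the comparability of $|\partial G|$ with $|G^\rho|/\rho$ is the technical heart of the argument; once those are in hand, combining the displayed estimates yields \eqref{eq:PoincGr} with $C_3 = C_3(L, Q)$.
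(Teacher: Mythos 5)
The paper does not prove this proposition at all: it is quoted as a known result, with the explicit (constructive) evaluation of the constants deferred to the proof of Proposition 3.2 in \cite{l:amr02}. So your proposal has to be judged on its own. Your treatment of \eqref{eq:PoincG-a-bis} is fine: the splitting through $u_G$, Cauchy--Schwarz on $\Gamma$, and a scaled trace inequality on the Lipschitz domain $G$ do give exactly the factor $1+|G|/(\rho|\Gamma|)$. For \eqref{eq:PoincG-a}, the compactness-and-contradiction argument can be made to work (it needs uniform extension operators to a fixed ball and convergence of the domains in measure, which you gloss over), but note it is non-constructive, whereas the whole point of the paper's citation is a quantitative constant; this is a stylistic mismatch rather than an error.

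The genuine gap is in \eqref{eq:PoincGr}. Your reduction hinges on the claim that $G^{\rho}=\{0<\mathrm{dist}(x,G)<\rho\}$ is a Lipschitz domain with constants controlled by $L$ and $Q$, so that \eqref{eq:PoincG-a-bis} can be applied on $G^{\rho}$ with $\Gamma=\partial G$. You flag this as ``the technical heart'', but in fact it is false in general, not merely unverified: the outer boundary $\{\mathrm{dist}(\cdot,G)=\rho\}$ is a level set of the distance function and need not be Lipschitz for the given $\rho$. For instance, if two portions of $\partial G$ face each other across an exterior gap of width exactly $2\rho$ (perfectly compatible with the Lipschitz class at scale $\rho$ when $L\leq 2$ and with $\mathrm{diam}(G)\leq Q\rho$), then the mid-line of the gap lies on $\partial G^{\rho}$ while $G^{\rho}$ occupies both sides of it, so $G^{\rho}$ is not locally a subgraph at those points and no choice of constants rescues the claim. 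Hence the final step of your plan cannot be completed as stated. The standard way out --- and essentially what is done in \cite[Proposition 3.2]{l:amr02} --- is to never use global regularity of the shell: cover $\partial G$ by $N(L,Q)$ boundary charts, observe that in each chart the one-sided strip of height $c(L)\rho$ below the graph is contained in $G^{\rho}$ and \emph{is} uniformly Lipschitz, prove the boundary Poincar\'e inequality there with the local mean $u_{\Delta_k}$ (your fundamental-theorem-of-calculus/averaging computation does exactly this), and then chain the local means $u_{\Delta_k}$ across overlapping charts, using that consecutive charts share boundary portions of length $\gtrsim c\rho$ and strip overlaps of area $\gtrsim c\rho^{2}$; only this chaining (which implicitly uses connectedness of $\partial G$, an assumption hidden in the quoted statement) produces the single mean $u_{\partial G}$ in \eqref{eq:PoincGr}.
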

The above Poincar\'{e}-type inequalities are well-known. A precise
evaluation of the constants $C_{1}$, $C_{2}$, $C_3$ in terms of
the scale invariant parameters $L$, $Q$ regarding the regularity and
the shape of $G$, can be found in the proof of \cite[Proposition
3.2]{l:amr02}.
\begin{prop} [Second Korn's inequality]
  \label{prop:Korn2}
  For every $\varphi \in H^{1}(G, \R^{2})$
  satisfying
\begin{equation}
   \label{eq:compkorn}
   \int_{G} ( \nabla \varphi - (\nabla \varphi)^T)=0,
\end{equation}
we have
\begin{equation}
   \label{eq:korn2}
   \int_{G} |\nabla \varphi|^2 \leq C \int_{G} |\widehat{\nabla} \varphi|^2
   ,
\end{equation}
where $C>0$ only depends on $L$ and $Q$.
\end{prop}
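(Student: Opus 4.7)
The strategy is a compactness-and-contradiction argument built on top of the classical \emph{first} Korn inequality (the one with a lower-order term), exploiting the scale invariance of the estimate to reduce to a standard domain. The first Korn inequality
$$\int_G |\nabla \varphi|^2 \leq C_* \Big( \int_G |\widehat\nabla \varphi|^2 + \frac{1}{\rho^2}\int_G |\varphi|^2 \Big)$$
is classical on Lipschitz domains, and a standard careful tracking shows $C_*$ can be chosen depending only on $L$ and $Q$ (this is already used implicitly in the Poincaré framework of Proposition~\ref{prop:Poinc}). I would take this as my workhorse.

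First, by the rescaling $\tilde\varphi(y)=\varphi(\rho y)$ on $\tilde G = \rho^{-1}G$, both sides of \eqref{eq:korn2} scale the same way, so we may assume $\rho=1$. The orthogonality \eqref{eq:compkorn} is preserved under this rescaling. Next, I argue by contradiction: suppose no constant $C=C(L,Q)$ works. Then there exist domains $G_n$ (all of Lipschitz class with constants $1,L$ and $\mathrm{diam}(G_n)\leq Q$) and functions $\varphi_n \in H^1(G_n,\mathbb R^2)$ satisfying \eqref{eq:compkorn}, normalized by $\int_{G_n} \varphi_n = 0$ (harmless: subtracting a constant affects neither the gradient nor \eqref{eq:compkorn}) and $\int_{G_n}|\nabla \varphi_n|^2 = 1$, and such that $\int_{G_n}|\widehat\nabla \varphi_n|^2 \to 0$.

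By the first Korn inequality together with the Poincaré inequality \eqref{eq:PoincG-a}, the sequence $\varphi_n$ is bounded in $H^1$. After a domain-compactness step (extracting a subsequence along which the defining Lipschitz graphs converge uniformly, so the $G_n$ converge in the Hausdorff sense to a limit $G_\infty$ of the same class), one extends the $\varphi_n$ to a fixed ball and passes to weak $H^1$/strong $L^2$ limits. The limit $\varphi_\infty$ satisfies $\widehat\nabla \varphi_\infty = 0$ on $G_\infty$, hence $\varphi_\infty(x)=a+Wx$ with $W$ antisymmetric; the orthogonality condition \eqref{eq:compkorn} passes to the limit and forces $W=0$, while the mean-zero normalization forces $a=0$, so $\varphi_\infty \equiv 0$. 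Finally, applying the first Korn inequality to the differences $\varphi_n-\varphi_m$ shows $\nabla \varphi_n$ is Cauchy in $L^2$ (since $\widehat\nabla \varphi_n\to 0$ in $L^2$ and $\varphi_n\to 0$ in $L^2$), so $\nabla \varphi_n \to 0$ strongly in $L^2$, contradicting $\|\nabla \varphi_n\|_{L^2}^2=1$.

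The main obstacle is the \emph{uniformity} of the constant over the class of admissible domains: proving the bound for a single fixed $G$ is straightforward from first Korn plus compactness on $G$, but the statement asks for $C=C(L,Q)$. This is what forces the domain-compactness step in the contradiction argument. As an alternative avoiding that step, one may decompose $G$ into a finite union of star-shaped Lipschitz pieces whose number and shape are controlled by $L$ and $Q$, apply the explicit second Korn inequality on each star-shaped piece (via a Bogovskii-type right inverse of the divergence), and patch the local estimates by controlling the discrepancy between the local infinitesimal rigid motions on overlaps.
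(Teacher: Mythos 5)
The paper itself does not prove Proposition \ref{prop:Korn2}; it is quoted as a classical result with references to \cite{l:friedrichs} and \cite{l:necas}, so any self-contained argument such as yours is necessarily a different route. Your compactness-and-contradiction scheme is a legitimate and standard way to get the $(L,Q)$-uniform constant: rescaling to $\rho=1$, extracting a limit domain from the uniformly Lipschitz class, passing \eqref{eq:compkorn} and the mean-zero normalization to the limit to kill the infinitesimal rigid motion, and contradicting $\int_{G_n}|\nabla\varphi_n|^2=1$. What it buys is uniformity over the whole class without constructing anything explicitly; what it costs is non-constructiveness and a collection of auxiliary facts (uniformly bounded number of boundary charts, uniform extension operators, non-degeneracy and connectedness of the limit domain $G_\infty$, convergence $\chi_{G_n}\to\chi_{G_\infty}$) each of which relies on the uniform Lipschitz character and deserves at least a sentence; the constructive route hinted at in your last lines is closer in spirit to the cited references and to the Kondrat'ev--Oleinik estimate the paper actually uses later (Proposition \ref{prop:Korn-Kondratev-Oleinik}).

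Two points need repair or more care. First, the concluding step as written, applying the first Korn inequality to $\varphi_n-\varphi_m$, is ill-posed because $\varphi_n$ and $\varphi_m$ live on different domains; the fix is to apply the first Korn inequality to $\varphi_n$ on $G_n$ itself and to show $\int_{G_n}|\varphi_n|^2\to 0$, which follows from strong $L^2$ convergence of the uniformly bounded extensions on a fixed ball together with $\chi_{G_n}\to\chi_{G_\infty}$ in $L^1$ and $\varphi_\infty\equiv 0$ on $G_\infty$. Second, your workhorse, namely the first Korn inequality with constant depending only on $L$ and $Q$ (and the uniform extension operator needed to pass to a fixed ball), is essentially of the same depth as the statement being proved, so declaring it classical leaves a mild circularity risk; either prove it constructively (star-shaped decomposition with cardinality controlled by $L,Q$, or a Kondrat'ev--Oleinik type argument), in which case the compactness step becomes unnecessary, or at least indicate how its $(L,Q)$-dependence is obtained. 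Finally, note that $\widehat{\nabla}\varphi_\infty=0$ forces a single rigid motion only if $G_\infty$ is connected with $|G_\infty|>0$; both follow from the uniform Lipschitz character (no pinching, uniform interior cone), but this should be said, since on a disconnected limit the single global condition \eqref{eq:compkorn} would not force $W=0$.
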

For a proof of this classical inequality see, for instance,
\cite{l:friedrichs}, \cite{l:necas}.

\begin{prop} [Generalized second Korn inequality]
   \label{prop:Korn_gener}
For every $\varphi\in H^1(G,\R^2)$ and for every $w\in H^1(G,\R)$,
\begin{equation}
  \label{eq:korn_gener}
    \|\nabla \varphi\|_{L^2(G)}\leq C\left(\|\widehat{\nabla} \varphi\|_{L^2(G)}+\frac{1}{\rho}\|\varphi+\nabla w\|_{L^2(G)}\right),
\end{equation}
where $C$ only depends on $L$ and $Q$.
\end{prop}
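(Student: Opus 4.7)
The natural approach is to reduce the inequality to an $L^{2}$-bound on the scalar $\omega:=\partial_1\varphi_2-\partial_2\varphi_1$ via Ne\v{c}as's $L^{2}$-characterization, exploiting the gauge-type observation that, since the planar curl annihilates gradients, $\omega=\partial_1(\varphi+\nabla w)_2-\partial_2(\varphi+\nabla w)_1$. First I would rescale by $y=x/\rho$, $\tilde\varphi(y):=\varphi(\rho y)$, $\tilde w(y):=w(\rho y)/\rho$; a direct change of variables shows that the three norms in the statement transform homogeneously and that the factor $1/\rho$ in front of $\|\varphi+\nabla w\|_{L^2}$ is precisely what makes the inequality scale-invariant, so one may assume $\rho=1$, with $\tilde G$ of Lipschitz class with constants $1$, $L$ and $\mathrm{diam}(\tilde G)\le Q$. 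The pointwise identity $|\nabla\varphi|^2=|\widehat{\nabla}\varphi|^2+\frac12\omega^2$ in two dimensions then reduces the statement to an $L^2$-bound for $\omega$ in terms of $\|\widehat{\nabla}\varphi\|_{L^2(G)}$ and $\|\psi\|_{L^2(G)}$, where $\psi:=\varphi+\nabla w$.

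Two elementary $H^{-1}$-bounds on $G$ supply the input for Ne\v{c}as's lemma. Since $\omega=\partial_1\psi_2-\partial_2\psi_1$, integration by parts against any $\eta\in H^1_0(G)$ gives $\int_G\omega\,\eta=\int_G\psi\cdot(\partial_2\eta,-\partial_1\eta)$, whence
\[
\|\omega\|_{H^{-1}(G)}\le\|\psi\|_{L^2(G)}.
\]
On the other hand, combining $\partial_i\varphi_j+\partial_j\varphi_i=2(\widehat{\nabla}\varphi)_{ij}$ with the commutativity of second derivatives yields, by a short direct computation, the identity
\[
\partial_j\omega=2\bigl(\partial_1(\widehat{\nabla}\varphi)_{j2}-\partial_2(\widehat{\nabla}\varphi)_{j1}\bigr),\qquad j=1,2,
\]
from which $\|\nabla\omega\|_{H^{-1}(G)}\le C\|\widehat{\nabla}\varphi\|_{L^2(G)}$ is immediate. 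Applying the Ne\v{c}as characterization $\|f\|_{L^2(G)}\le C_G\bigl(\|f\|_{H^{-1}(G)}+\|\nabla f\|_{H^{-1}(G)}\bigr)$ to $f=\omega$ yields $\|\omega\|_{L^2(G)}\le C\bigl(\|\widehat{\nabla}\varphi\|_{L^2(G)}+\|\psi\|_{L^2(G)}\bigr)$, and unscaling restores the factor $1/\rho$.

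The main obstacle is obtaining Ne\v{c}as's lemma with a constant depending only on the scale-invariant parameters $L$ and $Q$. Since the classical proof rests on a Lipschitz atlas of $\partial G$, a subordinate partition of unity, and Calder\'on--Zygmund theory for the Laplacian, it does carry such an explicit quantitative dependence, and the preliminary rescaling to unit size is precisely engineered to absorb it. No extraneous dependence on $\varphi$ or $w$ enters the final estimate beyond what appears on the right-hand side, so the constant depends only on $L$ and $Q$, as asserted.
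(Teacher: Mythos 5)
Your proof is correct, and it follows a genuinely different route from the one the paper relies on. The paper does not prove Proposition \ref{prop:Korn_gener} at all: it quotes Theorem 4.3 of \cite{l:mrv17}, whose argument is built on the classical second Korn inequality and, as the paper itself points out, uses in an essential way the existence of an interior disc $B_{s_0\rho}\subset G$ (needed to control the mean infinitesimal rotation through the cancellation $\mathrm{curl}\,\nabla w=0$); the only new content here is the observation that such a disc exists with $s_0=\frac{L}{L^2+1+\sqrt{L^2+1}}$, so that $C$ depends only on $L$ and $Q$. You exploit the same key cancellation, but channel it through the Ne\v{c}as--Lions inequality $\|f\|_{L^2(G)}\le C_G(\|f\|_{H^{-1}(G)}+\|\nabla f\|_{H^{-1}(G)})$ applied to $\omega=\partial_1\varphi_2-\partial_2\varphi_1$: the identities $\|\omega\|_{H^{-1}}\le\|\varphi+\nabla w\|_{L^2}$, $\partial_j\omega=2\bigl(\partial_1(\widehat{\nabla}\varphi)_{j2}-\partial_2(\widehat{\nabla}\varphi)_{j1}\bigr)$ and the pointwise decomposition $|\nabla\varphi|^2=|\widehat{\nabla}\varphi|^2+\tfrac12\omega^2$ are all correct as distributional/pointwise statements for $H^1$ data, and your rescaling is done correctly, so the factor $\tfrac1\rho$ lands exactly where \eqref{eq:korn_gener} requires. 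What the two approaches buy: yours is self-contained modulo a single classical lemma, needs no interior-disc device, and in effect derives the generalized inequality directly from the same source from which the ordinary second Korn inequality flows; the price is that the whole quantitative burden, namely that the Ne\v{c}as constant after rescaling depends only on $L$ and $Q$, is asserted rather than proved. That assertion is true (the constant is governed by the number and Lipschitz character of the boundary charts, which the definition of Lipschitz class with constants $\rho$, $L$ together with $\mathrm{diam}(G)\le Q\rho$ controls via a covering argument), but it is at the same level of ``known constructive fact'' as the Korn and Poincar\'e constants the paper quotes from the literature, so you should back it with an explicit reference or a short covering argument rather than the appeal to Calder\'on--Zygmund theory alone; the cited route of \cite{l:mrv17} keeps everything at the level of classical Korn plus elementary integration by parts, with constants already tracked in the references the paper builds on.
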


The above Generalized Korn inequality, established in
\cite[Theorem 4.3]{l:mrv17}, turned out to be a key result in
dealing with the direct Neumann problem for Reissner-Mindlin
plates (see Proposition 5.2 in \cite{l:mrv17}) and in deriving
unique continuation estimates for system
\eqref{eq:1.dir-pbm-1}--\eqref{eq:1.dir-pbm-2} (see Theorem 4.2 in
\cite{l:mrv18}) and the Lipschitz propagation of smallness stated in Proposition \ref{theo:LPS}. Let us notice that in the statement of Theorem
4.3 in \cite{l:mrv17} it was made the explicit assumption that the
domain contains a disc of radius $s_0\rho$, since this condition
plays a fundamental role in the proof and, consequently, the
constant $C$ appearing in the inequality \eqref{eq:korn_gener}
depends on $s_0$. This hypothesis, which was emphasized in
\cite{l:mrv17} because of its relevance for the
derivation of the estimate, can be deduced {}from the boundary
regularity, with $s_0=\frac{L}{L^2+1+\sqrt{L^2+1}}$ and
therefore it is omitted here.

\begin{proof}[Proof of Theorem \ref{theo:size_below_cavity}]
Let us consider $D^{r_D}\subset \Omega$ and its boundary $\partial D^{r_D}=\partial D \cup \Gamma^{r_D}$, where $\Gamma^{r_D} =\{x\in \Omega\setminus D \ |\ dist(x,\partial D)={r_D}\}$. Let us tessellate $\R^2$ with internally nonoverlapping closed squares having side
$l=\frac{{r_D}}{2\sqrt 2}$ and let $Q_1$, ..., $Q_N$ be those squares having nonempty intersection with $D^{r_D}$. Let us define
$\widetilde{D}^{r_D}$ the interior of $\cup_{i=1}^N Q_i \setminus D$. We have that $\partial \widetilde{D}^{r_D}=\partial D \cup \Sigma^{r_D}$, where $\Sigma^{r_D}\subset \cup_{j\in J}\partial Q_j$, with $J=\{j\ | \ Q_j\cap \Gamma^{r_D}\neq \emptyset\}$. As a portion of the boundary of
$\widetilde{D}^{r_D}$, $\partial D$ is of Lipschitz class with constants $\frac{{r_D}}{\sqrt{L_D^2+1}}$ and $L_D$. By construction, $\Sigma^{r_D}$ is of Lipschitz class with constants $\frac{{r_D}}{8}$ and $1$. Therefore $\partial \widetilde{D}^{r_D}$ is of Lipschitz class with constants $\gamma {r_D}$, $L'$, where $\gamma = \left(\max\left\{8,\sqrt{L_D^2+1}\right\}\right)^{-1}$ and $L'=\max\{1,L_D\}$.
Moreover, $\hbox{diam}(\widetilde{D}^{r_D})\leq (Q_D+3){r_D}$.
Let
\begin{equation*}
   x_{\partial D} =\frac{1}{|\partial D|}
    \int_{\partial D} x
\end{equation*}
be the center of mass of $\partial D$. Let
\begin{equation*}
   a=\frac{1}{|\partial D|}
    \int_{\partial D} w_c,
\end{equation*}
\begin{equation*}
   b=\frac{1}{|\partial D|}
    \int_{\partial D} \varphi_c,
\end{equation*}
\begin{equation*}
   W=\frac{1}{2|\widetilde{D}^{r_D}|}
    \int_{\widetilde{D}^{r_D}} \nabla \varphi_c - \nabla^T \varphi_c,
\end{equation*}
\begin{equation*}
   r=b+W(x-x_{\partial D}),
\end{equation*}
\begin{equation*}
   \varphi_c^*=\varphi_c - r,
\end{equation*}
\begin{equation*}
   w_c^*=w_c +b\cdot(x-x_{\partial D}) +a.
\end{equation*}
By these definitions, $\varphi_c^*$ and $w_c^*$ have zero mean on $\partial D$, and
\begin{equation*}
   \varphi_c^* + \nabla w_c^*=\varphi_c + \nabla w_c -W(x-x_{\partial D}).
\end{equation*}
By the weak formulation of the Reissner-Mindlin system satisfied by $(\varphi_0,w_0)$ in $D$ choosing the test pair $(\varphi_c^*,w_c^*)$, and recalling the right hand side of \eqref{eq:double_inequality}, we have
\begin{multline}
  \label{eq:proof_cavity_below1}
W_c-W_0=\int_{D} {\mathbb P}\nabla \varphi_0\cdot \nabla \varphi_c^* + \int_{D}
S(\varphi_0+\nabla w_0)\cdot(\varphi_c^*+\nabla w_c^*+W(x-x_{\partial D}))=\\
=\int_{\partial D}({\mathbb P}\nabla \varphi_0 n)\cdot \varphi_c^* +
\int_{\partial D}(S(\varphi_0+\nabla w_0)\cdot n)w_c^*
+\int_D S(\varphi_0+\nabla w_0)\cdot W(x-x_{\partial D})=
\\
=I_1+I_2+I_3.
\end{multline}
By applying H\"older inequality and by \eqref{eq:lip_operatori},
\begin{equation}
  \label{eq:proof_cavity_below2}
|I_1|\leq C\left(h^3\int_{\partial  D}|\nabla \varphi_0|^2\right)^\frac{1}{2}
\left(h^3\int_{\partial  D}|\varphi_c^*|^2\right)^\frac{1}{2},
\end{equation}
with $C$ only depending on $\alpha_0$, $\gamma_0$, $\alpha_1$.

Recalling \eqref{eq:D-away_boundary}, we can apply interior
regularity estimates (see \cite[Theorem 6.1]{l:c80}) and then, by taking into account the normalization condition \eqref{eq:normalization-0}, by applying Proposition
\ref{prop:Korn_gener} to $(\varphi_0,w_0)$ in $\Omega$, and
recalling \eqref{eq:E_double} and \eqref{eq:W_0}, we have
\begin{multline}
  \label{eq:proof_cavity_below3}
h^3\int_{\partial  D}|\nabla \varphi_0|^2\leq h^3|\partial D|\|\nabla\varphi_0\|_{L^\infty(D)}^2
\leq Ch^3|\partial D|
\left(
\|\varphi_0\|_{H^1(\Omega)}^2+\frac{1}{\rho_0^2}\|w_0\|_{H^1(\Omega)}^2
\right)
\leq\\
\leq
Ch^3|\partial D|\left(\|\widehat{\nabla} \varphi_0\|_{L^2(\Omega)}^2+\frac{1}{\rho_0^2}
\|\varphi_0+\nabla w_0\|_{L^2(\Omega)}^2\right)\leq \frac{C}{\rho_o^2}|\partial D| W_0,
\end{multline}
with $C$ only depending on $\alpha_0$, $\gamma_0$, $\alpha_1$, $\frac{\rho_0}{h}$, $L_0$, $Q_0$, $d_0$.

By \eqref{eq:PoincGr}, \eqref{eq:compkorn} and \eqref{eq:E_double} we have
\begin{multline}
  \label{eq:proof_cavity_below4}
h^3\int_{\partial  D}|\varphi_c^*|^2\leq Ch^3{r_D} \int_{D^{r_D}}|\nabla \varphi_c^*|^2
\leq Ch^3{r_D} \int_{\widetilde{D}^{r_D}}|\nabla \varphi_c^*|^2 \leq
\\
\leq
Ch^3{r_D} \int_{\widetilde{D}^{r_D}}|\widehat{\nabla} \varphi_c|^2
\leq C {r_D}W_c,
\end{multline}
with $C$ only depending on $\alpha_0$, $\gamma_0$, $\alpha_1$, $\frac{\rho_0}{h}$, $L_D$, $Q_D$.

By using arguments similar to those in \cite[Lemma 2.8]{l:ar98}, we have that
\begin{equation}
  \label{eq:proof_cavity_below5}
|\partial D|\leq C \frac{|D|}{{r_D}},
\end{equation}
with $C$ only depending on $L_D$.

By \eqref{eq:proof_cavity_below2}--\eqref{eq:proof_cavity_below5},
\begin{equation}
  \label{eq:proof_cavity_below6}
|I_1|\leq \frac{C}{\rho_0}|D|^{\frac{1}{2}}W_0^{\frac{1}{2}}W_c^{\frac{1}{2}},
\end{equation}
with $C$ only depending on $\alpha_0$, $\gamma_0$, $\alpha_1$, $\frac{\rho_0}{h}$, $L_0$, $Q_0$, $L_D$, $Q_D$, $d_0$.

Let us estimate $I_3$. By H\"older inequality and by \eqref{eq:lip_operatori},
\begin{multline}
  \label{eq:proof_cavity_below7}
|I_3|\leq Ch\left(\int_D|\varphi_0+\nabla w_0|^2\right)^{\frac{1}{2}}
\left(\int_D|W(x-x_{\partial D})|^2\right)^{\frac{1}{2}}\leq\\
\leq
Ch\|\varphi_0+\nabla w_0\|_{L^\infty(D)}|D|^{\frac{1}{2}}|W|\left(\int_D|x-x_{\partial D}|^2\right)^{\frac{1}{2}},
\end{multline}
with $C$ only depending on $\alpha_1$.

By interior regularity estimates, by using the normalization
conditions \eqref{eq:normalization-0}, the Poincar\'{e} inequality
\eqref{eq:PoincG-a}, the Generalized Korn inequality
\eqref{eq:korn_gener}, and recalling \eqref{eq:E_double} and
\eqref{eq:W_0}, we have
\begin{multline}
  \label{eq:proof_cavity_below8}
h\|\varphi_0+\nabla w_0\|_{L^\infty(D)}\leq
 Ch\
\left(
\|\varphi_0\|_{H^1(\Omega)}+ \frac{1}{\rho_0}  \| w_0\|_{H^1(\Omega)}\right)\leq\\
\leq Ch\left(\int_\Omega
|\widehat{\nabla} \varphi_0|^2+\frac{1}{\rho_0^2}|\varphi_0+\nabla w_0|^2\right)^{\frac{1}{2}}\leq \frac{C}{\sqrt{\rho_0}}W_0^{\frac{1}{2}},
\end{multline}
with with $C$ only depending on $\alpha_0$, $\gamma_0$, $\alpha_1$, $\frac{\rho_0}{h}$, $d_0$, $L_0$, $Q_0$.

By H\"older inequality, by the Generalized Korn inequality
\eqref{eq:korn_gener}, noticing that $|\widetilde{D}^{r_D}|\geq
C{r_D}^2$, with $C$ only depending on $L_D$, using
$r_D<\frac{d_0}{2}\rho_0$, and recalling \eqref{eq:E_double} and
\eqref{eq:W_c}, we have
\begin{multline}
  \label{eq:proof_cavity_below9}
|W|\leq \frac{C}{|\widetilde{D}^{r_D}|^{\frac{1}{2}}}\left(\int_{\widetilde{D}^{r_D}}|\nabla \varphi_c|^2\right)^{\frac{1}{2}}
\leq \frac{C}{{r_D}} \left(\int_{\widetilde{D}^{r_D}}|\widehat{\nabla} \varphi_c|^2
+\frac{1}{{r_D}^2}|\varphi_c+\nabla w_c|^2\right)^{\frac{1}{2}}\leq \\
\leq \frac{C}{{r_D}^2} \left(\int_{\widetilde{D}^{r_D}}\rho_0^2|\widehat{\nabla} \varphi_c|^2
+|\varphi_c+\nabla w_c|^2\right)^{\frac{1}{2}}\leq \frac{C}{{r_D}^2\sqrt{\rho_0}}W_c^{\frac{1}{2}},
\end{multline}
with $C$ only depending on $\alpha_0$, $\gamma_0$, $\alpha_1$, $\frac{\rho_0}{h}$, $L_D$, $Q_D$, $d_0$.
Moreover,
\begin{equation}
  \label{eq:proof_cavity_below10}
\left(\int_D|x-x_{\partial D}|^2\right)^{\frac{1}{2}}\leq |D|^{\frac{1}{2}}
\hbox{diam}(D)\leq C{r_D}^2
\end{equation}
with $C$ only depending on $Q_D$.

By \eqref{eq:proof_cavity_below7}--\eqref{eq:proof_cavity_below10}, it follows that
\begin{equation}
  \label{eq:proof_cavity_below11}
|I_3|\leq \frac{C}{\rho_0}W_0^{\frac{1}{2}}W_c^{\frac{1}{2}}|D|^{\frac{1}{2}},
\end{equation}
with with $C$ only depending on $\alpha_0$, $\gamma_0$, $\alpha_1$, $\frac{\rho_0}{h}$, $d_0$, $L_0$, $Q_0$, $L_D$, $Q_D$.

By applying H\"older inequality, by \eqref{eq:lip_operatori}, \eqref{eq:PoincGr}, \eqref{eq:proof_cavity_below5} and \eqref{eq:proof_cavity_below8}, we get
\begin{equation}
  \label{eq:proof_cavity_below12}
|I_2|\leq \frac{C}{\sqrt{\rho_0}}W_0^{\frac{1}{2}}|D|^{\frac{1}{2}}\left(\int_{\widetilde{D}^{r_D}}|\nabla w_c^*|^2\right)^{\frac{1}{2}},
\end{equation}
with with $C$ only depending on $\alpha_0$, $\gamma_0$, $\alpha_1$, $\frac{\rho_0}{h}$, $d_0$, $L_0$, $Q_0$, $L_D$, $Q_D$.

On the other hand,
\begin{multline}
  \label{eq:proof_cavity_below13}
\int_{\widetilde{D}^{r_D}}|\nabla w_c^*|^2= \int_{\widetilde{D}^{r_D}}|\nabla w_c+b|^2\leq
2\int_{\widetilde{D}^{r_D}}|\nabla w_c+\varphi_c|^2+2\int_{\widetilde{D}^{r_D}}|\varphi_c-b|^2\leq
\\
\leq
2\int_{\widetilde{D}^{r_D}}|\nabla w_c+\varphi_c|^2+4\int_{\widetilde{D}^{r_D}}|\varphi_c-b-W(x-x_{\partial D})|^2+4\int_{\widetilde{D}^{r_D}}|W(x-x_{\partial D})|^2.
\end{multline}
By applying the Poincar\'{e} inequality \eqref{eq:PoincG-a-bis} and the Korn inequality
\eqref{eq:korn2}, using \eqref{eq:E_double} and \eqref{eq:proof_cavity_below9} and recalling that
$\hbox{diam}(\widetilde{D}^{r_D})\leq (Q_D+3){r_D}$ and $|\widetilde{D}^{r_D}|\leq C{r_D}^2$, with $C$ only depending on $Q_D$, we have
\begin{multline}
  \label{eq:proof_cavity_below14}
\int_{\widetilde{D}^{r_D}}|\nabla w_c^*|^2\leq
C\int_{\widetilde{D}^{r_D}}|\nabla w_c+\varphi_c|^2+C{r_D}^2\int_{\widetilde{D}^{r_D}}|\widehat{\nabla} \varphi_c|^2+C|W|^2\int_{\widetilde{D}^{r_D}}|x-x_{\partial D}|^2\leq
\\
\leq C\int_{\widetilde{D}^{r_D}}(|\varphi_c+\nabla w_c|^2+\rho_0^2|\widehat{\nabla} \varphi_c|^2)+\frac{C}{\rho_0}W_c\leq \frac{C}{\rho_0}W_c,
\end{multline}
where $C$ only depends on $\alpha_0$, $\gamma_0$, $\alpha_1$, $\frac{\rho_0}{h}$, $d_0$, $L_D$, $Q_D$.

By \eqref{eq:proof_cavity_below12} and \eqref{eq:proof_cavity_below14}, it follows that
\begin{equation}
  \label{eq:proof_cavity_below15}
    |I_2|\leq \frac{C}{\rho_0}W_0^{\frac{1}{2}}W_c^{\frac{1}{2}}|D|^{\frac{1}{2}},
\end{equation}
where $C$ only depends on $\alpha_0$, $\gamma_0$, $\alpha_1$, $\frac{\rho_0}{h}$, $d_0$, $L_0$, $Q_0$, $L_D$, $Q_D$.

Finally, by \eqref {eq:proof_cavity_below1}, \eqref{eq:proof_cavity_below6}, \eqref{eq:proof_cavity_below12} and \eqref{eq:proof_cavity_below15},
\begin{equation}
  \label{eq:proof_cavity_below16}
    W_c-W_0\leq \frac{C}{\rho_0}W_0^{\frac{1}{2}}W_c^{\frac{1}{2}}|D|^{\frac{1}{2}},
\end{equation}
where $C$ only depends on $\alpha_0$, $\gamma_0$, $\alpha_1$, $\frac{\rho_0}{h}$, $d_0$, $L_0$, $Q_0$, $L_D$, $Q_D$, and the thesis follows by straightforward calculations.
\end{proof}

\section{Proof of the size estimates for rigid inclusions}
\label{SE-proof_rigid}

The comparison between the works $W_0$ and
$W_r$ is stated in the following lemma.
\begin{lem}
   \label{lem:double-inequality-rigid}
Let $\Omega$ be a bounded domain in $\R^2$ and $D \subset \subset \Omega$ a measurable set. Let $S$, $\mathbb P$ given in
\eqref{eq:shearing-tensor}, \eqref{eq:bending-tensor} satisfy the
strong convexity conditions \eqref{eq:Lame-ell}. Let $(\varphi_r, w_r) \in H_D^1(\Omega,
\R^2) \times H_D^1(\Omega)$, $(\varphi_0, w_0) \in H^1(\Omega,
\R^2) \times H^1(\Omega)$ be the weak solutions to problems \eqref{eq:1.dir-pbm-rig-1}--\eqref{eq:1.dir-pbm-rig-8} and \eqref{eq:1.dir-pbm-1}--\eqref{eq:1.dir-pbm-4}, respectively. We
have
\begin{multline}
  \label{eq:double-inequality-rigid}
    \int_{D}  {\mathbb P}\nabla \varphi_0\cdot \nabla \varphi_0 +
    S(\varphi_0+\nabla w_0)\cdot
        (\varphi_0+\nabla w_0)
    \leq W_0-W_r=\\
    =\int_{\partial D}
    (\mathbb P \nabla \varphi_r^+) n \cdot \varphi_0 +
    (S(\varphi_r^+ + \nabla w_r^+)\cdot n) w_0.
\end{multline}
\end{lem}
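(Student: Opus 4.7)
My plan is to prove the two halves of \eqref{eq:double-inequality-rigid} in parallel, in close analogy with Lemma \ref{lem:double_inequality}. The equality should follow from two applications of Green's identity on $\Omega \setminus \overline{D}$ combined via the symmetry of ${\mathbb P}$ and $S$, and the inequality from an energy-orthogonality argument of Pythagorean type on $\Omega \setminus \overline{D}$.

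\textbf{Two key identities.} The first would come from testing the reference weak formulation \eqref{eq:weak} with $(\psi,v) = (\varphi_r, w_r) \in H^1(\Omega,\R^2)\times H^1(\Omega)$: because $\nabla \varphi_r = 0$ and $\varphi_r + \nabla w_r = 0$ on $D$ by the rigidity constraints \eqref{eq:1.dir-pbm-rig-5}--\eqref{eq:1.dir-pbm-rig-6}, the bulk integrand vanishes on $D$ and the identity collapses to
$$\int_{\Omega \setminus \overline{D}} {\mathbb P}\nabla \varphi_0 \cdot \nabla \varphi_r^+ + S(\varphi_0 + \nabla w_0) \cdot (\varphi_r^+ + \nabla w_r^+) = W_r.$$
The second would come from Green's identity on $\Omega \setminus \overline{D}$ applied to $(\varphi_r^+, w_r^+)$, which solves \eqref{eq:1.dir-pbm-rig-1}--\eqref{eq:1.dir-pbm-rig-2} there and satisfies the Neumann data \eqref{eq:1.dir-pbm-rig-3}--\eqref{eq:1.dir-pbm-rig-4} on $\partial \Omega$, tested against $(\varphi_0, w_0)$:
\begin{multline*}
\int_{\Omega \setminus \overline{D}} {\mathbb P}\nabla \varphi_r^+ \cdot \nabla \varphi_0 + S(\varphi_r^+ + \nabla w_r^+) \cdot (\varphi_0 + \nabla w_0)\\
= W_0 - \int_{\partial D} ({\mathbb P}\nabla \varphi_r^+)n \cdot \varphi_0 + (S(\varphi_r^+ + \nabla w_r^+)\cdot n) w_0,
\end{multline*}
the minus sign on $\partial D$ encoding that $n$ there is the outer normal of $D$.

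\textbf{Conclusion.} The left-hand sides of the two identities agree by the major symmetry of ${\mathbb P}$ and the scalar nature of $S$; subtracting them yields the equality in \eqref{eq:double-inequality-rigid}. For the inequality, combining the first identity with $W_r = \int_{\Omega \setminus \overline{D}}{\mathbb P}\nabla \varphi_r^+\cdot\nabla \varphi_r^+ + S(\varphi_r^+ + \nabla w_r^+)\cdot(\varphi_r^+ + \nabla w_r^+)$ from \eqref{eq:work-Wr} produces the orthogonality
$$\int_{\Omega \setminus \overline{D}} {\mathbb P}\nabla(\varphi_0 - \varphi_r^+)\cdot \nabla \varphi_r^+ + S((\varphi_0 - \varphi_r^+) + \nabla (w_0 - w_r^+))\cdot(\varphi_r^+ + \nabla w_r^+) = 0.$$
A Pythagorean expansion, together with the strong convexity \eqref{eq:Lame-ell}, then yields $\int_{\Omega \setminus \overline{D}} {\mathbb P}\nabla \varphi_0 \cdot \nabla \varphi_0 + S(\varphi_0 + \nabla w_0) \cdot (\varphi_0 + \nabla w_0) \geq W_r$; rewriting $W_0$ via \eqref{eq:W_0} as the sum of its $\Omega \setminus \overline{D}$- and $D$-contributions converts this bound into the claimed lower estimate on $W_0 - W_r$.

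\textbf{Main care points.} I do not expect any deep difficulty; the bookkeeping requires attention only to the outer-normal sign in Green's identity on $\Omega \setminus \overline{D}$ and to the correct use of the rigidity conditions to kill the $D$-integral when testing the reference problem against $(\varphi_r,w_r)$. The whole argument is independent of the $\mathcal{A}$-ambiguity in the solutions, since both the bilinear form and the works are invariant under the addition of elements of $\mathcal{A}$.
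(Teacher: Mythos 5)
Your proposal is correct and follows essentially the same route as the paper: the two identities you derive are exactly the paper's \eqref{eq:rigid-7-2} and \eqref{eq:rigid-8-1} (you obtain the latter slightly more directly by testing the global weak formulation \eqref{eq:weak} with $(\varphi_r,w_r)$ and using the rigidity on $D$, rather than integrating by parts in $\Omega\setminus\overline{D}$ and checking that the $\partial D$-term vanishes), and subtracting them gives the equality just as in the paper. Your orthogonality-plus-Pythagoras argument on $\Omega\setminus\overline{D}$ for the inequality is an equivalent rearrangement of the paper's expansion of the energy of $(\varphi_0-\varphi_r,\,w_0-w_r)$ over $\Omega$, exploiting in the same way that $(\varphi_r,w_r)$ is a rigid pair on $D$, so no further comparison is needed.
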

\begin{proof}
The proof of this \textit{energy lemma} can be obtained by
adapting the proof of the corresponding result in linear
elasticity, see \cite{l:mr03}. Therefore, we skip the details and
we report the main steps of the proof.

Let us multiply equations \eqref{eq:1.dir-pbm-rig-1},
\eqref{eq:1.dir-pbm-rig-2} by $w_0$, $\varphi_0$, respectively.
Integrating by parts and summing up, we obtain
\begin{multline}
  \label{eq:rigid-7-2}
    \int_{\Omega \setminus \overline{D}}  {\mathbb P}\nabla \varphi_r^+ \cdot \nabla \varphi_0 +
    S(\varphi_r^+ +\nabla w_r^+)\cdot
        (\varphi_0+\nabla w_0)  = \\
        = W_0 - \int_{\partial D}
    (\mathbb P \nabla \varphi_r^+) n \cdot \varphi_0 +
    (S(\varphi_r^+ + \nabla w_r^+)\cdot n) w_0.
\end{multline}
Next, we multiply equations \eqref{eq:1.dir-pbm-1},
\eqref{eq:1.dir-pbm-2} by $w_r^+$, $\varphi_r^+$, respectively,
and we integrate by parts in $\Omega \setminus \overline{D}$. Summing up, we
obtain
\begin{multline}
  \label{eq:rigid-8-1}
    W_r = \int_{\Omega \setminus \overline{D}}  {\mathbb P}\nabla \varphi_r^+ \cdot \nabla \varphi_0 +
    S(\varphi_r^+ +\nabla w_r^+)\cdot
        (\varphi_0+\nabla w_0)  - \\
        - \int_{\partial D}
    (\mathbb P \nabla \varphi_0) n \cdot \varphi_r^+ +
    (S(\varphi_0 + \nabla w_0)\cdot n) w_r^+=
    \\
    =  \int_{\Omega \setminus \overline{D}}  {\mathbb P}\nabla \varphi_r^+ \cdot \nabla \varphi_0 +
    S(\varphi_r^+ +\nabla w_r^+)\cdot
        (\varphi_0+\nabla w_0) ,
\end{multline}
where, in the last step, we have used the fact that the second
integral on the right hand side vanishes because of the definition
of $(\varphi_r, w_r)$ in $\overline{D}$. By \eqref{eq:rigid-7-2}
and \eqref{eq:rigid-8-1}, the equality on the right hand side of
\eqref{eq:double-inequality-rigid} follows.

To obtain the inequality in \eqref{eq:double-inequality-rigid}, we
consider the quadratic form of the strain energy associated to the
pair $(\varphi_0 -\varphi_r, w_0 - w_r)$ in $\Omega$. Recalling
the definition of $(\varphi_r^-, w_r^-)$ in $\overline{D}$, by
\eqref{eq:work-Wr}, \eqref{eq:W_0} and by \eqref{eq:rigid-8-1}, we
have
\begin{multline}
  \label{eq:rigid-10-1}
    \int_\Omega  \mathbb P \nabla (\varphi_0 - \varphi_r) \cdot \nabla(\varphi_0 -
    \varphi_r)+ S((\varphi_0-\varphi_r) + \nabla (w_0-w_r)) \cdot ((\varphi_0-\varphi_r) + \nabla
    (w_0-w_r)) =
    \\
    = \int_\Omega  \mathbb P \nabla \varphi_0  \cdot \nabla \varphi_0 +
     S(\varphi_0 + \nabla w_0) \cdot (\varphi_0 + \nabla w_0)
    +
    \\
    +
    \int_{\Omega \setminus \overline{D}}  \mathbb P \nabla \varphi_r^+  \cdot \nabla \varphi_r^+ +
     S(\varphi_r^+ + \nabla w_r^+) \cdot (\varphi_r^+ + \nabla w_r^+)
     -
     \\
     -
    2 \int_{\Omega \setminus \overline{D}}  \mathbb P \nabla \varphi_0  \cdot \nabla \varphi_r^+ +
     S(\varphi_0 + \nabla w_0) \cdot (\varphi_r^+ + \nabla w_r^+)
     = W_0 - W_r.
\end{multline}
Noticing that
\begin{multline}
  \label{eq:rigid-10-3}
    \int_D  \mathbb P \nabla \varphi_0 \cdot \nabla \varphi_0 +
    S(\varphi_0 + \nabla w_0) \cdot (\varphi_0 + \nabla w_0)
    =
    \\
    =\int_D  \mathbb P \nabla (\varphi_0 - \varphi_r^-) \cdot \nabla(\varphi_0 -
    \varphi_r^-)+ S((\varphi_0-\varphi_r^-) + \nabla (w_0-w_r^-)) \cdot ((\varphi_0-\varphi_r^-) + \nabla
    (w_0-w_r^-))
    \leq
    \\
    \leq
    \int_\Omega  \mathbb P \nabla (\varphi_0 - \varphi_r) \cdot \nabla (\varphi_0 -
    \varphi_r)+ S((\varphi_0-\varphi_r) + \nabla (w_0-w_r)) \cdot ((\varphi_0-\varphi_r) + \nabla
    (w_0-w_r)),
\end{multline}
by \eqref{eq:rigid-10-1} the thesis follows.
\end{proof}
Let us notice that the estimate {}from above stated in Theorem
\ref{theo:size-above-rigid} can be derived as in the proof of Theorem
\ref{theo:size_above_cavity}.

In order to prove Theorem \ref{theo:size-below-rigid} we shall use
the following proposition.
\begin{prop}
  \label{prop:size-below-rigid-L2-contact-actions}
Let the hypotheses of Theorem \ref{theo:size-below-rigid} be
satisfied. The contact actions exerted by the material in $\Omega
\setminus \overline{D}$ on $D$ throughout the boundary $\partial
D$ are square summable on $\partial D$, e.g., $(\mathbb P \nabla
\varphi_r^+) n \in L^2(\partial D, \R^2)$ and $S(\varphi_r^+ +
\nabla w_r^+) \cdot n \in L^2(\partial D)$, and the following
estimate holds
\begin{equation}
  \label{eq:rigid-L2-estimate-partialD}
    \int_{\partial D} |(\mathbb P \nabla
    \varphi_r^+) n|^2 + \rho_0^2 | S(\varphi_r^+ + \nabla w_r^+) \cdot n|^2 \leq
    C  \frac{\rho_0}{r_D}  \int_{\Omega \setminus \overline{D}} \rho_0^5 |
    \widehat{\nabla} \varphi_r^+ |^2 + \rho_0^3 | \varphi_r^+ + \nabla w_r^+|^2,
\end{equation}
where $n$ denotes the outer unit normal to $D$ and the constant $C>0$ only depends on $Q_0$, $d_0$, $L_D$,
$Q_D$, $\alpha_0$, $\alpha_1$, $\gamma_0$.
\end{prop}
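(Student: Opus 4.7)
The plan is to establish \eqref{eq:rigid-L2-estimate-partialD} by a Rellich-type identity adapted to the coupled Reissner-Mindlin system, in the spirit of the $L^{2}$ solvability results of \cite{l:fkp,l:kp} for the regularity and Neumann problems on Lipschitz domains. The starting point is to construct an auxiliary Lipschitz vector field $\beta:\overline{\Omega\setminus D}\to\R^{2}$ supported in an $r_{D}$-neighborhood of $\partial D$ inside $\Omega\setminus\overline{D}$, with $\beta\cdot n\ge c_{0}>0$ on $\partial D$ (where $n$ is the outer unit normal to $D$), $|\beta|\le C$, and $|\nabla\beta|\le C/r_{D}$, with $c_{0},C$ depending only on $L_{D},Q_{D}$. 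Such a $\beta$ exists thanks to the Lipschitz regularity of $\partial D$ combined with the separation $\hbox{dist}(D,\partial\Omega)\ge d_{0}\rho_{0}>2r_{D}$; in particular, $\beta$ vanishes near $\partial\Omega$, so the Neumann data $\overline{Q},\overline{M}$ never appear in the identity.

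Next, I would multiply equation \eqref{eq:1.dir-pbm-rig-1} by $\beta\cdot\nabla w_{r}^{+}$ and equation \eqref{eq:1.dir-pbm-rig-2} by $\beta\cdot\nabla\varphi_{r}^{+}$, integrate by parts on $\Omega\setminus\overline{D}$, and sum. The support condition kills the $\partial\Omega$-boundary terms. Using the product rule and the symmetries of $\mathbb P$ and $S$, the bulk terms regroup as quadratic forms in $\widehat\nabla\varphi_{r}^{+}$ and $\varphi_{r}^{+}+\nabla w_{r}^{+}$ with coefficients controlled by $|\beta|+r_{D}|\nabla\beta|\le C$. Invoking $|\nabla\beta|\le C/r_{D}$ and the ellipticity \eqref{eq:convex-S-Lame}, \eqref{eq:convex-P-Lame}, the bulk contribution is bounded by the right-hand side of \eqref{eq:rigid-L2-estimate-partialD}, the factor $\rho_{0}/r_{D}$ arising precisely from $\|\nabla\beta\|_{\infty}$, while the powers $\rho_{0}^{5}$ and $\rho_{0}^{3}$ track the distinct $h$-scalings of $\mathbb P$ and $S$.

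The boundary contribution on $\partial D$ is the crux. Exploiting the Dirichlet data $\varphi_{r}^{+}=b$ and $w_{r}^{+}=-b\cdot x+a$ on $\partial D$, one has $\partial_{\tau}\varphi_{r}^{+}=0$ and $\partial_{\tau}w_{r}^{+}=-b\cdot\tau$, hence $\nabla\varphi_{r}^{+}=(\partial_{n}\varphi_{r}^{+})\otimes n$ and $\beta\cdot\nabla\varphi_{r}^{+}=(\beta\cdot n)\partial_{n}\varphi_{r}^{+}$ on $\partial D$. A short algebraic computation using the isotropic form $\mathbb PA=B[(1-\nu)\widehat{A}+\nu\,\hbox{tr}(A)I_{2}]$ shows
\begin{equation*}
(\mathbb P\nabla\varphi_{r}^{+})n\cdot\partial_{n}\varphi_{r}^{+}\;\ge\;c\,B^{-1}|(\mathbb P\nabla\varphi_{r}^{+})n|^{2},
\end{equation*}
so that, combined with $\beta\cdot n\ge c_{0}$, the $\partial D$-term controls $\int_{\partial D}|(\mathbb P\nabla\varphi_{r}^{+})n|^{2}$. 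The companion scalar computation, re-expressing $\partial_{n}w_{r}^{+}=S^{-1}(S(\varphi_{r}^{+}+\nabla w_{r}^{+})\cdot n)-\varphi_{r}^{+}\cdot n$, yields $\int_{\partial D}|S(\varphi_{r}^{+}+\nabla w_{r}^{+})\cdot n|^{2}$. The residual $b$-dependent pieces (arising from the tangential component of $\beta$ and from $\varphi_{r}^{+}\cdot n$) are absorbed via Cauchy-Schwarz together with the trace bound $|b|^{2}|\partial D|\lesssim\int_{\partial D}|\partial_{\tau}w_{r}^{+}|^{2}\lesssim r_{D}^{-1}\int_{D^{r_{D}}}|\nabla w_{r}^{+}|^{2}$ and the Generalized Korn inequality \eqref{eq:korn_gener}, which converts $|\nabla w_{r}^{+}|^{2}$-integrals into the energy on the right-hand side of \eqref{eq:rigid-L2-estimate-partialD}.

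The main obstacle is the rigorous justification of this Rellich identity at the natural $H^{1}$ regularity of weak solutions: $\beta\cdot\nabla\varphi_{r}^{+}$ and $\beta\cdot\nabla w_{r}^{+}$ are only $L^{2}$ near $\partial D$, so the integration by parts is not classical. Following \cite{l:fkp,l:kp}, one proves the identity first on smooth interior approximations $D_{\varepsilon}\supset\supset D$ (or by tangential difference quotients in a boundary slab), with constants uniform in $\varepsilon$, and passes to the limit using that $\partial D$ is Lipschitz and that $\beta$ is transverse. A secondary but delicate point is the careful bookkeeping of the $h$- and $\rho_{0}$-scalings across the two coupled equations, which must be calibrated so that the weight $\rho_{0}^{2}$ in front of $|S(\varphi_{r}^{+}+\nabla w_{r}^{+})\cdot n|^{2}$ matches the relative $\rho_{0}^{2}$-ratio between the two bulk terms on the right-hand side of \eqref{eq:rigid-L2-estimate-partialD}.
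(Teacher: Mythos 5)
Your overall strategy --- a Rellich-type identity with a vector field transverse to $\partial D$, justified by an approximation argument at the natural regularity --- is the same core mechanism the paper uses (its Lemma \ref{Rellich} and Proposition \ref{prop:Local-Boundary-Estimates-L2}, applied in local Lipschitz graph coordinates after covering $\partial D$ by $O(Q_D^2)$ cubes), so the difference is mainly organizational (one global field $\beta$ versus localization with cut-offs). However, two steps in your sketch do not work as stated. First, the bulk terms of the Rellich identity are \emph{not} quadratic forms in $\widehat{\nabla}\varphi_r^+$ and $\varphi_r^+ + \nabla w_r^+$: the commutator terms generated by $\nabla\beta$ have the structure $(\mathbb P\nabla\varphi_r^+)_{ik}\,\partial_k\beta_l\,\partial_l(\varphi_r^+)_i$ (and analogous $S$-terms containing the full $\nabla w_r^+$), so they involve the full gradient. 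Since the right-hand side of \eqref{eq:rigid-L2-estimate-partialD} contains only $|\widehat{\nabla}\varphi_r^+|^2$ and $|\varphi_r^+ + \nabla w_r^+|^2$, you need a Korn inequality in an $r_D$-neighborhood of $\partial D$ with constant depending only on $L_D$, $Q_D$; the generalized Korn inequality \eqref{eq:korn_gener} at scale $r_D$ is not sufficient, because it produces $r_D^{-2}|\varphi_r^+ + \nabla w_r^+|^2$, i.e. an overall factor $(\rho_0/r_D)^2$ instead of the stated $\rho_0/r_D$. The paper resolves this by first normalizing $(\varphi_r,w_r)$ so that $\varphi_r\equiv 0$, $w_r\equiv 0$ in $\overline D$ (legitimate, since both sides of \eqref{eq:rigid-L2-estimate-partialD} are invariant under adding elements of ${\mathcal A}$) and then applying the Kondrat'ev--Oleinik inequality \eqref{eq:Korn-Kondratev-Oleinik} to the extension of $\varphi_r^+$ by zero across $\partial D$; this ingredient is missing from your argument.

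Second, your treatment of the residual $b$-dependent terms rests on $\int_{\partial D}|\partial_\tau w_r^+|^2\lesssim r_D^{-1}\int_{D^{r_D}}|\nabla w_r^+|^2$, which is a trace bound on the \emph{gradient} and is not available at $H^1$ regularity; what is available is \eqref{eq:PoincGr}, a trace--Poincar\'e bound on $w_r^+$ itself, from which a bound on $|b|$ can be salvaged only after a separate nondegeneracy argument for $\int_{\partial D}|b\cdot(x-x_{\partial D})|^2$. The cleaner, and in fact structurally necessary, move is the normalization $b=0$, $a=0$ mentioned above (the paper's \eqref{eq:rigid-normalization}): it makes the Dirichlet data on $\partial D$ homogeneous, kills all tangential and $b$-dependent terms at once, and is precisely what legitimizes the zero-extension Korn step. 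With these two repairs your global-vector-field version of the Rellich argument would go through and would essentially coincide with the paper's proof; your pointwise coercivity computation for the boundary quadratic form under vanishing tangential derivative is correct and is implicitly what Lemma \ref{Rellich} encodes.
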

\begin{proof}[Proof of Theorem \ref{theo:size-below-rigid}]

By using \eqref{eq:rigid-1-10} and \eqref{eq:rigid-1-11}, the
right hand side of \eqref{eq:double-inequality-rigid} can be
written as
\begin{multline}
  \label{eq:rigid-11-1}
    W_0-W_r
     =
    \int_{\partial D}
    ((\mathbb P \nabla \varphi_r^+)n  -  (S(\varphi_r^+ + \nabla w_r^+)\cdot n)x) \cdot
    (\varphi_0 - \varphi_{0, \partial D})
    +
    \\
    +
     \int_{\partial D}
     (S(\varphi_r^+ + \nabla w_r^+)\cdot n)x \cdot \varphi_0 +
     \int_{\partial D}
     (S(\varphi_r^+ + \nabla w_r^+)\cdot n) (w_0 - w_{0,\partial D})
     \equiv I_1 + I_2 + I_3.
\end{multline}
By applying H\"{o}lder's inequality and Poincar\'{e}'s inequality
\eqref{eq:PoincGr} we have
\begin{equation}
  \label{eq:rigid-12-1}
    |I_1|
    \leq
    C r_D^{1/2}
    \left (
    \int_D | \nabla \varphi_0 |^2
    \right )^{1/2}
    \left (
    \int_{\partial D}
    |(\mathbb P \nabla \varphi_r^+)n|^2 + \rho_0^2 |S(\varphi_r^+ + \nabla w_r^+) \cdot n|^2
    \right )^{1/2},
\end{equation}
where $C>0$ only depends on $Q_0$, $L_D$, $Q_D$.

By interior regularity estimates, by the generalized Korn inequality
\eqref{eq:korn_gener} (applied to $(\varphi_0,w_0)$ in
$\Omega$), and by recalling \eqref{eq:E_double} and \eqref{eq:W_0}, we
have
\begin{equation}
  \label{eq:rigid-13-1}
    \left (
    \int_D | \nabla \varphi_0 |^2
    \right )^{1/2}
    \leq
    \frac{C}{\rho_0^{5/2} } |D|^{1/2} W_0^{1/2},
\end{equation}
where $C>0$ only depends on $\alpha_0$, $\gamma_0$, $\alpha_1$,
$\frac{\rho_0}{h}$, $L_0$, $Q_0$, $d_0$. Therefore, by
\eqref{eq:rigid-12-1} and \eqref{eq:rigid-13-1}, we have
\begin{equation}
  \label{eq:rigid-13-2}
    |I_1|
    \leq
    \frac{C}{\rho_0^{5/2} } r_D^{1/2} |D|^{1/2} W_0^{1/2}
    \left (
    \int_{\partial D}
    |(\mathbb P \nabla \varphi_r^+)n|^2 + \rho_0^2|S(\varphi_r^+ + \nabla w_r^+) \cdot n|^2
    \right )^{1/2},
\end{equation}
where the constant $C>0$ only depends on $\alpha_0$, $\gamma_0$,
$\alpha_1$, $\frac{\rho_0}{h}$, $L_0$, $Q_0$, $d_0$, $L_D$, $Q_D$.

By using similar estimates, we get
\begin{equation}
  \label{eq:rigid-13-3}
    |I_3|
    \leq
    \frac{C}{\rho_0^{5/2} } r_D^{1/2} |D|^{1/2} W_0^{1/2}
    \left (
    \int_{\partial D}
    \rho_0^2  |S(\varphi_r^+ + \nabla w_r^+) \cdot n|^2
    \right )^{1/2},
\end{equation}
where the constant $C>0$ only depends on $\alpha_0$, $\gamma_0$,
$\alpha_1$, $\frac{\rho_0}{h}$, $L_0$, $Q_0$, $d_0$, $L_D$, $Q_D$.

By \eqref{eq:rigid-1-10} and by using H\"{o}lder's inequality, the
integral $I_2$ can be dominated as follows
\begin{multline}
  \label{eq:rigid-13-4}
    I_2
    =
    \int_{\partial D}
     (S(\varphi_r^+ + \nabla w_r^+)\cdot n) (x \cdot \varphi_0 -
     ( x \cdot \varphi_0  )_{\partial D})
     \leq
     \\
     \leq
     \left (
     \int_{\partial D}
     | x \cdot \varphi_0 - ( x \cdot \varphi_0  )_{\partial D} |^2
     \right )^{1/2}
     \left (
    \int_{\partial D}
    |S(\varphi_r^+ + \nabla w_r^+) \cdot n|^2
    \right )^{1/2}.
\end{multline}
Noticing that $\nabla( x \cdot \varphi_0) = \varphi_0 + (\nabla
\varphi_0)^T x$, the first integral on the right hand side of
\eqref{eq:rigid-13-4} can be estimated by using Proposition
\ref{prop:Poinc}, interior regularity estimates for $\nabla
\varphi_0$, the generalized Korn's inequality
\eqref{eq:korn_gener} (applied to $(\varphi_0,w_0)$ in $\Omega$),
inequality \eqref{eq:E_double} and the definition of $W_0$ in
\eqref{eq:W_0}, obtaining
\begin{multline}
  \label{eq:rigid-14-1}
     \int_{\partial D}
     | x \cdot \varphi_0 - ( x \cdot \varphi_0  )_{\partial D} |^2
     \leq
     C r_D \int_{D} |\nabla(x \cdot \varphi_0)|^2
     \leq
     C r_D \int_D | \varphi_0|^2 + |x|^2 |\nabla \varphi_0|^2
     \leq
     \\
     \leq
     C r_D |D| ( \|\varphi_0\|_{L^\infty(D)}^2 + \rho_0^2 \|\nabla
     \varphi_0\|_{L^\infty(D)}^2)
     \leq
     \frac{C}{\rho_0^{3} } r_D |D| W_0,
\end{multline}
where $C>0$ only depends on $\alpha_0$, $\gamma_0$, $\alpha_1$,
$\frac{\rho_0}{h}$, $L_0$, $Q_0$, $d_0$, $L_D$, $Q_D$. Inserting
the above estimate in \eqref{eq:rigid-13-4} we have
\begin{equation}
  \label{eq:rigid-14-2}
    I_2 \leq
    \frac{C}{\rho_0^{3/2} } r_D^{1/2} |D|^{1/2} W_0^{1/2}
     \left (
    \int_{\partial D}
    |S(\varphi_r^+ + \nabla w_r^+) \cdot n|^2
    \right )^{1/2},
\end{equation}
where the constant $C>0$ only depends on $\alpha_0$, $\gamma_0$,
$\alpha_1$, $\frac{\rho_0}{h}$, $L_0$, $Q_0$, $d_0$, $L_D$, $Q_D$.

By \eqref{eq:rigid-11-1}, \eqref{eq:rigid-13-2},
\eqref{eq:rigid-13-3}, \eqref{eq:rigid-14-2} and by Proposition
\ref{prop:size-below-rigid-L2-contact-actions}, we have
\begin{equation}
  \label{eq:rigid-14-3}
    W_0-W_r
    \leq
    \frac{C}{\rho_0^{3/2} } |D|^{1/2} W_0^{1/2}
    \left (
    \int_{\Omega \setminus \overline{D}} \rho_0^4 |
    \widehat{\nabla} \varphi_r^+ |^2 + \rho_0^2 | \varphi_r^+ + \nabla w_r^+|^2
    \right )^{1/2},
\end{equation}
with $C>0$ only depending on $\alpha_0$, $\gamma_0$, $\alpha_1$,
$\frac{\rho_0}{h}$, $L_0$, $Q_0$, $d_0$, $L_D$, $Q_D$.

To conclude, by the strong convexity of $\mathbb P$ and
$S$, recalling \eqref{eq:E_double} and \eqref{eq:work-Wr}, we have
\begin{multline}
  \label{eq:rigid-31-2}
   \int_{\Omega \setminus \overline{D}} \rho_0^4 |
    \widehat{\nabla} \varphi_r^+ |^2 + \rho_0^2 | \varphi_r^+ + \nabla w_r^+|^2
   \leq
   \\
    \leq
    C \rho_0 \int_{\Omega \setminus \overline{D}}
    \mathbb P \nabla \varphi_r^+ \cdot \nabla \varphi_r^+ + S
    (\varphi_r^+ + \nabla w_r^+) \cdot (\varphi_r^+ + \nabla w_r^+)
    =
    C \rho_0 W_r,
\end{multline}
with $C>0$ only depending on $\alpha_0$, $\gamma_0$, $\alpha_1$,
$\frac{\rho_0}{h}$, $L_0$, $Q_0$, $d_0$, $L_D$, $Q_D$. Therefore,
by \eqref{eq:rigid-14-3} and \eqref{eq:rigid-31-2}, we have
\begin{equation}
  \label{eq:rigid-final}
    W_0-W_r
    \leq
    \frac{C}{\rho_0 } |D|^{1/2} W_0^{1/2} W_r^{1/2},
\end{equation}
with $C>0$ only depending on $\alpha_0$, $\gamma_0$, $\alpha_1$,
$\frac{\rho_0}{h}$, $L_0$, $Q_0$, $d_0$, $L_D$, $Q_D$. By some
algebra, estimate \eqref{eq:size-below-rigid} follows.
\end{proof}
The remaining part of the section is devoted to the proof of
Proposition \ref{prop:size-below-rigid-L2-contact-actions}. The
main idea consists in estimating the $L^2(\partial D)$-norm of the
conormal derivatives $(\mathbb P \nabla \varphi_r^+) n$, $S\nabla
w_r^+ \cdot n$ in terms of the strain energy stored in $\Omega
\setminus \overline{D}$ and the $L^2(\partial D)$-norm of the
tangential component of the gradient of $\varphi_r^+$ and $w_r^+$.

We start by introducing some notation.

Given $ \rho > 0$, $L > 0$ and a Lipschitz continuous function
$\psi : (-2\rho, 2\rho) \rightarrow \R$ satisfying
$\psi (0)=0$, $\| \psi \|_{C^{0,1}((-2\rho, 2\rho))} \leq 2 \rho
L$, let us define for every $t$, $0<t\leq2 \rho$,
\begin{equation}
  \label{eq:set-cyl+}
  C_{t}^{+}
  =
  \{(x_1,x_2) \in \R^2 \ | \ |x_1|<t, \
\psi(x_1) < x_2 < L t \},
\end{equation}
\begin{equation}
  \label{eq:set-delta}
  \Delta_{t}
  =
  \{(x_1,x_2) \in \R^2 \ | \ |x_1|<t, \ x_2=\psi(x_1)  \}.
\end{equation}
We shall use the following two-dimensional version of the
constructive Korn-type inequality on cylindrical domains due to
Kondrat'ev and Oleinik \cite{l:ko}.
\begin{prop}
  \label{prop:Korn-Kondratev-Oleinik} {\rm(\cite{l:ko}, Theorem 2)}
  Let
\begin{equation}
  \label{eq:6.0c}
  C_{l', l}
  =
  \{(x_1,x_2) \in \R^2 \ | \ |x_1|<l', \
-l < x_2 < l \},
\end{equation}
where $l>l'$. For every $u \in H^1(C_{l', l},\R^2)$ such that
$u=0$ on $\{ x_2=-l \}$, we have
\begin{equation}
  \label{eq:Korn-Kondratev-Oleinik}
  \int_{C_{l', l}}
  | \nabla u |^2
  \leq
  C
  \left ( 1+ \frac {4l^2} {l'^{2}}
  \right )
  \int_{C_{l', l}}
  | \widehat {\nabla} u |^2,
\end{equation}
where $C>0$ is an absolute constant.
\end{prop}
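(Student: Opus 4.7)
The proof hinges on the pointwise algebraic identity
\begin{equation*}
|\nabla u|^2 = 2|\widehat{\nabla} u|^2 - (\divrg u)^2 + 2\det(\nabla u),
\end{equation*}
which is verified by a direct expansion: indeed $2|\widehat{\nabla}u|^2-|\nabla u|^2=(\partial_1 u_1)^2+(\partial_2 u_2)^2+2(\partial_1 u_2)(\partial_2 u_1) = (\divrg u)^2 - 2\det(\nabla u)$. Integrating over $C_{l',l}$ and dropping the nonpositive term $-\int(\divrg u)^2$, the target reduces to the boundary estimate
\begin{equation*}
\Bigl|\int_{C_{l',l}}\det(\nabla u)\Bigr| \leq K\Bigl(1+\tfrac{2l}{l'}\Bigr)\,\|\widehat{\nabla} u\|_{L^2(C_{l',l})}\,\|\nabla u\|_{L^2(C_{l',l})},
\end{equation*}
for some absolute $K$; plugging this into the identity and applying Young's inequality with $\varepsilon=1/2$ to absorb $\|\nabla u\|^2$ on the left then yields the target constant $C(1+4l^2/l'^2)$.

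To prove the boundary estimate I would exploit the null-Lagrangian structure of the determinant: $\det(\nabla u)=\partial_1(u_1\partial_2 u_2)-\partial_2(u_1\partial_1 u_2)$, so by the divergence theorem $\int_{C_{l',l}}\det(\nabla u)$ reduces to a sum of integrals on the four sides of $\partial C_{l',l}$, and the contribution from $\{x_2=-l\}$ vanishes thanks to $u|_{x_2=-l}=0$. On the top side $\{x_2=l\}$ I would integrate by parts in $x_1$, and on the two lateral sides $\{x_1=\pm l'\}$ in $x_2$ down to the bottom, using the Dirichlet condition there to kill the endpoint corner contributions. Symmetrizing with the twin identity $\det(\nabla u)=\partial_2(u_2\partial_1 u_1)-\partial_1(u_2\partial_2 u_1)$, the remaining boundary terms can be estimated by Cauchy--Schwarz together with the standard trace inequality $\|v\|_{L^2(\partial C_{l',l})}^2\leq K(\tfrac{1}{l'}\|v\|_{L^2(C_{l',l})}^2+l\|\nabla v\|_{L^2(C_{l',l})}^2)$ and the one-dimensional Hardy--Poincar\'e inequality $\|u\|_{L^2(C_{l',l})}\leq 2l\,\|\partial_2 u\|_{L^2(C_{l',l})}$, the latter being precisely what the Dirichlet side $\{x_2=-l\}$ delivers.

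\textbf{Main obstacle.} The delicate point is the precise bookkeeping of the aspect ratio $l/l'$. A naive trace estimate on a thin rectangle costs a factor $1/\min(l,l')$, which is insufficient. The key trick, enabled by the Dirichlet condition, is to rewrite every $u$-value appearing on a trace as the vertical integral of $\partial_2 u$ starting from $\{x_2=-l\}$; each such rewriting costs one power of $l$ through Hardy--Poincar\'e in $x_2$, but converts the surface measures on the top and lateral sides into the correct aspect-ratio dependence $l/l'$. The single Young absorption at the end squares this factor, producing the advertised $l^2/l'^2$ scaling.
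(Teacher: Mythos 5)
The paper does not prove this proposition at all: it is quoted verbatim from Kondrat'ev and Oleinik (\cite{l:ko}, Theorem 2), so the only question is whether your plan is itself a complete proof, and it is not. The pointwise identity $|\nabla u|^2=2|\widehat{\nabla}u|^2-(\mathrm{div}\,u)^2+2\det(\nabla u)$ and the final Young absorption are fine, but they shift the entire content of the theorem onto the unproven estimate $\bigl|\int_{C_{l',l}}\det(\nabla u)\bigr|\le K\bigl(1+\tfrac{2l}{l'}\bigr)\|\widehat{\nabla}u\|_{L^2}\|\nabla u\|_{L^2}$. Writing $\partial_1u_2=\varepsilon_{12}+\omega$, $\partial_2u_1=\varepsilon_{12}-\omega$ with $\omega=\tfrac12(\partial_1u_2-\partial_2u_1)$ gives $\det(\nabla u)=\det(\widehat{\nabla}u)+\omega^2$, so your intermediate claim asserts $\|\omega\|_{L^2}^2\lesssim\|\widehat{\nabla}u\|_{L^2}^2+(1+\tfrac{l}{l'})\|\widehat{\nabla}u\|_{L^2}\|\nabla u\|_{L^2}$, which is equivalent, up to absolute constants, to the Korn inequality you are trying to prove. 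The plan is therefore not a reduction: everything hinges on the sketched boundary argument.

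That boundary argument does not close. The divergence theorem applied to $\det(\nabla u)=\partial_1(u_1\partial_2u_2)-\partial_2(u_1\partial_1u_2)$ produces on the top and lateral sides integrals of products of $u$ with first derivatives of $u$; after your further tangential integrations by parts these become terms such as $\int_{\{x_2=l\}}u_2\,\partial_1u_1\,dx_1$ together with corner values $u_1u_2(\pm l',l)$. Neither is controllable by the tools you invoke: the trace inequality $\|v\|^2_{L^2(\partial C_{l',l})}\le K\bigl(\tfrac{1}{l'}\|v\|^2_{L^2(C_{l',l})}+l\|\nabla v\|^2_{L^2(C_{l',l})}\bigr)$ applied to $v=\partial_1u_1$ (or to $v=u_2\partial_1u_1$) requires second derivatives of $u$, which are unavailable and in any case must not enter the final bound; more fundamentally, boundary traces of $\nabla u$ and point values at corners are not even defined for $u\in H^1$ and cannot be bounded by interior $L^2$ norms of $\nabla u$, let alone by $\|\widehat{\nabla}u\|\,\|\nabla u\|$. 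Moreover, the corners $(\pm l',l)$ at which your lateral integrations by parts terminate lie on the top side, not on the Dirichlet side $\{x_2=-l\}$, so the boundary condition does not ``kill the endpoint corner contributions'' as claimed. Getting the aspect-ratio dependence $1+4l^2/l'^{2}$ genuinely requires the Kondrat'ev--Oleinik machinery (explicit weighted integral identities exploiting $u|_{x_2=-l}=0$), which is precisely why the paper cites their Theorem 2 rather than proving the statement.
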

The next proposition states local boundary estimates in $L^2$ of
the conormal derivatives of solutions to the Mindlin-Reissner
plate problem. A proof shall be presented at the end of this
section.
\begin{prop}
  \label{prop:Local-Boundary-Estimates-L2}
Let $S$, $\mathbb P$ given in \eqref{eq:shearing-tensor},
\eqref{eq:bending-tensor} satisfy the strong convexity conditions
\eqref{eq:Lame-ell}.

Let $w \in H^{1}(C_{2 \rho}^{+})$ be a solution to
\begin{equation}
  \label{eq:rigid-17-1}
  {\rm div} (S \nabla w) = - {\rm div} (S\varphi) \quad \hbox{ in }\ C_{2
  \rho}^{+},
\end{equation}
with $\varphi \in H^1 (C_{2
  \rho}^{+}, \R^2)$.

If $w|_{\Delta_{2 \rho}} \in H^{1}(\Delta_{2 \rho})$, then $S
\nabla w \cdot n  \in L^{2}(\Delta_{\rho})$ and we have
\begin{equation}
  \label{eq:rigid-17-2}
  \int_{\Delta_{\rho}}
  |S \nabla w \cdot n|^{2}
  \leq
  C
  \left (
  \int_{\Delta_{2 \rho}} \rho_0^2 |\nabla_{T}w |^{2}
  +
  \left ( 1+\frac{\rho_0}{\rho} \right ) \int_{C_{2 \rho}^{+}} \rho_0 |\nabla w|^{2}
    + \int_{C_{2 \rho}^{+}} \rho_0 |\varphi|^{2} + \rho_0^3 |\nabla \varphi|^2
  \right )
\end{equation}
where $\nabla_T w$ is the tangential component of $\nabla w$, and
the constant $C>0$ only depends on $L$, $\alpha_0$, $\gamma_0$,
$\alpha_1$.

Let $\varphi \in H^{1}(C_{2 \rho}^{+}, \R^2)$ be a solution to
\begin{equation}
  \label{eq:rigid-18-1}
  {\rm div} (\mathbb P \nabla \varphi)  =  S(\varphi + \nabla w) \quad \hbox{ in }\ C_{2
  \rho}^{+},
\end{equation}
with $w \in H^1 (C_{2
  \rho}^{+})$.

If $\varphi|_{\Delta_{2 \rho}} \in H^{1}(\Delta_{2 \rho}, \R^2)$,
then $(\mathbb P \nabla \varphi ) n  \in L^{2}(\Delta_{\rho},
\R^2)$ and we have
\begin{equation}
  \label{eq:rigid-18-2}
  \int_{\Delta_{\rho}}
  |(\mathbb P \nabla \varphi ) n|^{2}
  \leq
  C
  \left (
  \int_{\Delta_{2 \rho}} \rho_0^6 |\nabla_{T} \varphi |^{2}
  +
  \left ( 1+\frac{\rho_0}{\rho} \right ) \int_{C_{2 \rho}^{+}} \rho_0^3 |\varphi |^2 + \rho_0^5 |\nabla
  \varphi|^{2}  + \int_{C_{2 \rho}^{+}}  \rho_0^3 |\nabla w|^2
  \right )
\end{equation}
where $\nabla_T \varphi$ is the tangential component of $\nabla
\varphi$, and the constant $C>0$ only depends on $L$,
$\alpha_0$, $\gamma_0$, $\alpha_1$.
\end{prop}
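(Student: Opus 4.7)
The plan is to derive both $L^2$ boundary estimates via the Rellich-type identity technique of \cite{l:r}, \cite{l:pw}, in the spirit of the $L^2$ solvability theory of \cite{l:fkp}, \cite{l:kp}. I would prove the scalar bound \eqref{eq:rigid-17-2} first, and then adapt the argument to the vectorial case \eqref{eq:rigid-18-2}, where the fourth-order tensor $\mathbb{P}$ and the coupling with $w$ require additional care. As a preliminary step, I would construct a Lipschitz vector field $\beta:\overline{C_{2\rho}^+}\to\R^2$ that coincides with the outward unit normal $n$ on $\Delta_{2\rho}$, is supported away from the top face $\{x_2=2\rho L\}$ and the lateral faces $\{|x_1|=2\rho\}$ of $\partial C_{2\rho}^+$, and satisfies $\|\nabla\beta\|_{L^\infty}\leq C/\rho$. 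A smooth cutoff $\eta\in C_c^\infty(\R^2)$ with $\eta=1$ on $\Delta_\rho$ and support inside $\Delta_{2\rho}$ can be folded into the test function to concentrate the boundary integral on $\Delta_\rho$.

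For \eqref{eq:rigid-17-2}, I would multiply ${\rm div}(S\nabla w)+{\rm div}(S\varphi)=0$ by $\eta^2(\beta\cdot\nabla w)$ and integrate over $C_{2\rho}^+$. Integration by parts on the principal term produces the boundary integral on $\Delta_{2\rho}$
\[
\int_{\Delta_{2\rho}}\eta^2\Bigl[(S\nabla w\cdot n)(\beta\cdot\nabla w)-\tfrac{1}{2}(\beta\cdot n)\,S\,|\nabla w|^2\Bigr],
\]
which, using $\beta\cdot n=1$ on $\Delta_{2\rho}$, reduces to $\int\eta^2\tfrac{S}{2}(|\nabla w\cdot n|^2-|\nabla_T w|^2)$. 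The ellipticity \eqref{eq:convex-S-Lame} then extracts on the left a positive multiple of $\int_{\Delta_\rho}|S\nabla w\cdot n|^2$, while the tangential term on the right yields the $\int_{\Delta_{2\rho}}\rho_0^2|\nabla_T w|^2$ contribution. The volume remainder generated by $\nabla\beta$ and $\nabla S$ is bounded by $(C/\rho+C)\int_{C_{2\rho}^+}S|\nabla w|^2$, producing, via \eqref{eq:lip_operatori}, the prefactor $(1+\rho_0/\rho)$ in front of $\int_{C_{2\rho}^+}\rho_0|\nabla w|^2$. The contribution $-\int{\rm div}(S\varphi)(\eta^2\beta\cdot\nabla w)$ is handled by a further integration by parts, shifting one derivative off $\varphi$, followed by Young's inequality with a small parameter that reabsorbs the stray $\int|\nabla w|^2$ into the main energy and leaves the $\int_{C_{2\rho}^+}\rho_0|\varphi|^2+\rho_0^3|\nabla\varphi|^2$ terms of \eqref{eq:rigid-17-2}.

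For \eqref{eq:rigid-18-2} I would repeat the scheme with the vector-valued multiplier whose $i$-th component is $\eta^2\beta\cdot\nabla\varphi_i$, tested against ${\rm div}(\mathbb{P}\nabla\varphi)-S(\varphi+\nabla w)=0$. Thanks to the symmetry $\mathbb{P}_{ijkl}=\mathbb{P}_{klij}$, the Rellich calculation yields a boundary integrand on $\Delta_{2\rho}$ of the form $((\mathbb{P}\nabla\varphi)n)\cdot(\beta\cdot\nabla\varphi)-\tfrac{1}{2}(\beta\cdot n)\,\mathbb{P}\nabla\varphi\cdot\nabla\varphi$; decomposing $\nabla\varphi$ into its tangential and conormal parts on $\Delta_{2\rho}$ and using the ellipticity \eqref{eq:convex-P-Lame} extracts a positive multiple of $\int_{\Delta_\rho}|(\mathbb{P}\nabla\varphi)n|^2$, controlled on the right by $\int_{\Delta_{2\rho}}\rho_0^6|\nabla_T\varphi|^2$ and by the volume contributions. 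The Kondrat'ev--Oleinik inequality \eqref{eq:Korn-Kondratev-Oleinik} is used, where needed, to dominate $|\nabla\varphi|^2$ by $|\widehat{\nabla}\varphi|^2$ in the volume integrals on cylindrical subdomains of $C_{2\rho}^+$. Finally, the coupling term $-S(\varphi+\nabla w)\cdot(\eta^2\beta\cdot\nabla\varphi)$ is bounded by Cauchy--Schwarz and Young's inequality, producing the $\int\rho_0^3|\varphi|^2+\rho_0^3|\nabla w|^2$ contributions of \eqref{eq:rigid-18-2} and absorbing a small multiple of $\int|\nabla\varphi|^2$ into the main energy.

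The hard part will be the algebraic bookkeeping of the Rellich identity in the vector-valued setting: verifying that the boundary quadratic form associated to $\mathbb{P}$ really splits into a coercive multiple of $|(\mathbb{P}\nabla\varphi)n|^2$ plus a purely tangential remainder, exactly as in the linear-elasticity computations of \cite{l:r}, \cite{l:pw}. A secondary technical difficulty is the careful dimensional accounting in $\rho_0$ and $h$, relying on \eqref{eq:convex-S-Lame}, \eqref{eq:convex-P-Lame} and \eqref{eq:lip_operatori}, so as to match the precise powers of $\rho_0$ displayed on the right-hand side of \eqref{eq:rigid-17-2} and \eqref{eq:rigid-18-2}.
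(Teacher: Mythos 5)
Your overall strategy---a localized Rellich (Payne--Weinberger) multiplier $\eta^2\,\beta\cdot\nabla w$ near $\Delta_{\rho}$, the sign structure of the boundary term with $\beta=n$, ellipticity to extract the conormal derivative, and absorption of the lower-order coupling terms---is the same mechanism as the paper's: its Lemma \ref{Rellich} is precisely the localized Rellich identity, applied there to $u=\eta(w-c)$ (cutoff on the solution, constant subtracted and handled by Poincar\'e and trace inequalities) rather than with the cutoff in the multiplier, and your bookkeeping reproduces in substance the chain \eqref{eq:rigid-21-1}--\eqref{eq:rigid-23-3}.

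There is, however, a genuine gap: your integration by parts is purely formal at the regularity level the proposition actually addresses. For $w\in H^{1}(C_{2\rho}^{+})$ the multiplier $\eta^2\beta\cdot\nabla w$ is not an admissible $H^1$ test function, and the boundary integrals you manipulate---$\int_{\Delta_{2\rho}}\eta^2 S\,|\nabla w|^2$ and $\int_{\Delta_{\rho}}|S\nabla w\cdot n|^2$---are not defined a priori; indeed the membership $S\nabla w\cdot n\in L^{2}(\Delta_{\rho})$ is part of the conclusion, so your computation tacitly assumes a version of what is to be proved. The paper closes this in two steps missing from your plan: it first proves the estimate under the extra hypothesis $w\in H^{3/2}(C_{2t}^{+})$ for every $t<\rho$, where the Rellich identity and the traces of $\nabla w$ are legitimate, and then removes this hypothesis by the approximation argument for Lipschitz graphs of \cite{l:mr03} (Step 2 of Proposition 4.2), i.e.\ smooth approximating boundaries with bounds uniform in the approximation, passing to the limit. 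Two smaller remarks: the Kondrat'ev--Oleinik inequality \eqref{eq:Korn-Kondratev-Oleinik} has no role inside this proposition (the right-hand side of \eqref{eq:rigid-18-2} already carries the full gradient, and that inequality requires vanishing on part of the boundary, which is not assumed here---it is used later, in Proposition \ref{prop:size-below-rigid-L2-contact-actions}); and the coercivity of the vector-valued boundary form for $\mathbb P$, which you rightly single out as the delicate point since \eqref{eq:convex-P-Lame} only controls the symmetric gradient, is exactly the elasticity-type Rellich computation that the paper imports from \cite{l:mr03} (Lemma 4.3) and \cite{l:amr02}; you would need to carry it out or invoke it explicitly.
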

\begin{proof}[Proof of Proposition
\ref{prop:size-below-rigid-L2-contact-actions}]
We follow the lines of the proof derived in \cite {l:amr02}
(Proposition $3.4$) for the analogous estimate in an electric
conductor, see also \cite{l:mr03} (Proposition $3.3$).

We cover $\partial D$ with internally non-overlapping closed cubes
$Q_{j}$, $j=1,...,J$, having side $\widetilde{r}_D=\gamma(L_D)r_D$,
where
$\gamma(L_D)=\frac{\min\{1,L_D\}}{2\sqrt{2}\sqrt{1+L_D^{2}}}$. The
number of these cubes can be evaluated by a slight modification of
the arguments in Lemma $2.8$ of \cite{l:ar98}, that is
\begin{equation}
  \label{eq:rigid-25-2}
  J
  \leq
  C\frac{|D|}{r_D^2}
  \leq
  CQ_D^2,
\end{equation}
where $C>0$ only depends on $L_D$.

For every $j=1,...,J$ there exists $x_{0} \in \partial D \cap
Q_{j}$ such that $Q_{j} \cap (\Omega \setminus \overline{D})
\subset C_{\overline{r}}^{+}$, where $ \overline{r} =
\frac{r_D}{2\sqrt{1+L_D^{2}}}$ and $C_{t}^{+} = \{ y=(y_1, y_2)
\in \Omega \setminus \overline{D} \ | \ |y_1|<t,
\psi(y_1)<y_{2}<tL_D\}$ for every $t$, $0<t\leq 2\overline{r}$.
Here, $\psi$ is a Lipschitz function in $(-2\overline{r},
2\overline{r})$ satisfying $\psi(0)=0$ and
$\|\psi\|_{C^{0,1}((-2\overline{r}, 2\overline{r}))}\leq
2\overline{r}L_D$, representing locally the boundary of $D$ in a
suitable coordinate system $y=(y_{1},y_{2})$, $y=Rx$, where $R$ is
an orthogonal transformation and $x=(x_{1},x_{2})$ is the
referential cartesian coordinate system.

Recalling \eqref{eq:1.dir-pbm-rig-5}--\eqref{eq:1.dir-pbm-rig-8},
it is not restrictive to choose $(\varphi_r, w_r)$ such that
\begin{equation}
  \label{eq:rigid-normalization}
  \varphi_r \equiv 0, \quad w_r \equiv 0 \quad \hbox{in }
  \overline{D}.
\end{equation}

By the change of variables $y=Rx$, the pair
$(\varphi_r^+=\varphi_r^+(R^T y), w_r^+=w_r^+(R^Ty))$ satisfies
\begin{equation}
  \label{eq:rigid-27-1}
  \divrg_{y}(S  \nabla_y w_r^+ )= - \divrg_y ( S R \varphi_r^+) \quad \hbox{in }
  C_{2\overline{r}}^{+}
\end{equation}
and
\begin{equation}
  \label{eq:rigid-27-2}
  \divrg_{y}(\widetilde{\mathbb P}(y) \nabla_y \varphi_r^+ )=  S(\varphi_r^+ + R^T \nabla_y w_r^+) \quad \hbox{in }
  C_{2\overline{r}}^{+},
\end{equation}
where $S=S(R^T y)$ and $\widetilde{\mathbb P}(y) [A] = R \mathbb P
(R^{T}y) [R^{T}AR] R^{T}$ for every $2 \times 2$ matrix $A$. The
tensor $ \widetilde{\mathbb P}$ belongs to $C^{0,1}(
C_{2\overline{r}}^{+})$, with $ \| \widetilde{\mathbb P}\|_{
C^{0,1}( C_{2\overline{r}}^{+})  } \leq C h^3$, where $C>0$ only
depends on $\alpha_0$, $\alpha_1$ and $\gamma_0$. Moreover, $
\widetilde{\mathbb P}$ satisfies the strong convexity condition
\eqref{eq:convex-P-Lame}.

Recalling that $w_{r}^{+}=0$ on $\partial D$ and by applying
\eqref{eq:rigid-17-2} with $\rho = \overline{r}$, we have
\begin{equation}
  \label{eq:rigid-27-3}
  \int_{ Q_{j} \cap \partial D } | S \nabla w_r^+ \cdot n  |^{2}
  \leq
  C
  \left ( 1+\frac{\rho_0}{r_D} \right )
  \int_{C_{2\overline{r}}^{+}} \rho_0 |  \nabla w_r^+   |^{2} +
  C \rho_0 \int_{C_{2\overline{r}}^{+}} |\varphi_r^+|^2 + \rho_0^2| \nabla
  \varphi_r^+ |^2 ,
\end{equation}
where $C>0$ only depends on $L_D$, $\alpha_0$, $\gamma_0$,
$\alpha_1$. Similarly, since $\varphi_r^+=0$ on $\partial D$, by
applying estimate \eqref{eq:rigid-18-2} with $\rho = \overline{r}$
we obtain
\begin{equation}
  \label{eq:rigid-27-4}
  \int_{ Q_{j} \cap \partial D } | (\mathbb P \nabla \varphi_r^+) n  |^{2}
  \leq
  C  \left ( 1+\frac{\rho_0}{r_D} \right )
  \int_{C_{2\overline{r}}^{+}}
   \rho_0^3|\varphi_r^+|^2 + \rho_0^5 | \nabla \varphi_r^+ |^2 +
  C \int_{C_{2\overline{r}}^{+}} \rho_0^3 |  \nabla w_r^+   |^{2},
\end{equation}
where $C>0$ only depends on $L_D$, $\alpha_0$, $\gamma_0$,
$\alpha_1$.

Let us consider the cylinder
\begin{equation}
  \label{eq:rigid-28-1}
  C^{*}
  =
  \{(y_1,y_2) \in \R^2 \ | \ |y_1|< \overline{r}, \
    |y_2| < L'\overline{r}
  \},
\end{equation}
where $L'= \max \{ L, 2-L \}$, and let $\varphi_r^* \in H^1( C^*,
\R^2)$ be defined as follows:
\begin{equation}
  \label{eq:rigid-28-2}
  \varphi_r^*=\left\{ \begin{array}{ll}
  \varphi_r^+ & \textrm{in } C_{\overline r}^+
  ,\\
  &  \\
  0 & \textrm{in }  C^{*} \setminus C_{\overline r}^+ .
  \end{array}\right.
\end{equation}
By applying the Poincar\'{e} inequality
$\int_{C_{2\overline{r}}^{+}} |\varphi_r^+|^2 \leq C r_D^2
\int_{C_{2\overline{r}}^{+}} | \nabla \varphi_r^+ |^2$, with $C>0$
only depending on $L_D$, the Korn-type inequality
\eqref{eq:Korn-Kondratev-Oleinik} to $\varphi_r^*$, and by
\eqref{eq:rigid-27-4}, we have
\begin{equation}
  \label{eq:rigid-28-3}
  \int_{ Q_{j} \cap \partial D } | (\mathbb P \nabla \varphi_r^+) n  |^{2}
  \leq
  C  \left ( 1+\frac{\rho_0}{r_D} \right )
  \int_{C_{2\overline{r}}^{+}}
   \rho_0^5 | \widehat{\nabla} \varphi_r^+ |^2 +
  C \int_{C_{2\overline{r}}^{+}} \rho_0^3 |  \nabla w_r^+   |^{2},
\end{equation}
where $C>0$ only depends on $L_D$, $\alpha_0$, $\gamma_0$,
$\alpha_1$.

Finally, in order to estimate locally the $L^2$ norm of the
contact forces $S(\varphi_r^+ + \nabla w_r^+) \cdot n$ on the
boundary of $D$, we rewrite inequality \eqref{eq:rigid-27-3} as
follows
\begin{multline}
  \label{eq:rigid-28-4}
  \int_{ Q_{j} \cap \partial D } | S(\varphi_r^+ + \nabla w_r^+) \cdot n  |^2
  \leq
  \\
  \leq
  C
  \left ( 1+\frac{\rho_0}{r_D} \right )
  \int_{C_{2\overline{r}}^{+}} \rho_0 |  \nabla w_r^+   |^{2} +
  C \rho_0 \int_{C_{2\overline{r}}^{+}}  |\varphi_r^+|^2 + \rho_0^2 | \nabla
  \varphi_r^+ |^2  + C \int_{ Q_{j} \cap \partial D } \rho_0^2 |\varphi_r^+|^2,
\end{multline}
where $C>0$ only depends on $L_D$, $\alpha_0$, $\gamma_0$,
$\alpha_1$. Recalling that $\varphi_r^+ =0$ on $\partial D$, by
using Poincar\'{e} inequalities and the Korn-type inequality
\eqref{eq:Korn-Kondratev-Oleinik}, we have
\begin{multline}
  \label{eq:rigid-29-2}
  \int_{ Q_{j} \cap \partial D } | S(\varphi_r^+ + \nabla w_r^+) \cdot n  |^2
  \leq
    C  \left ( 1+\frac{\rho_0}{r_D} \right )
  \int_{C_{2\overline{r}}^{+}}
    \rho_0 |  \nabla w_r^+   |^{2} +
  C \int_{C_{2\overline{r}}^{+}}  \rho_0^3 | \widehat{\nabla} \varphi_r^+ |^2  ,
\end{multline}
where $C>0$ only depends on $L_D$, $\alpha_0$, $\gamma_0$,
$\alpha_1$.

By summing \eqref{eq:rigid-28-3} and \eqref{eq:rigid-29-2}, using
the normalization \eqref{eq:rigid-normalization}, by applying
Poincar\'{e}'s inequality \eqref{eq:PoincG-a-bis} and the
Korn-type inequality \eqref{eq:Korn-Kondratev-Oleinik} we have
\begin{multline}
  \label{eq:rigid-finale}
  \int_{ Q_{j} \cap \partial D } | (\mathbb P \nabla \varphi_r^+) n  |^{2}
    + \rho_0^2 | S(\varphi_r^+ + \nabla w_r^+) \cdot n  |^2
  \leq
    \\
    \leq
  C  \left ( 1+\frac{\rho_0}{r_D} \right )
  \int_{C_{2\overline{r}}^{+}}
   \rho_0^5 | \widehat{\nabla} \varphi_r^+ |^2 +
  \rho_0^3 |  \nabla w_r^+   |^{2}
    \leq
    \\
    \leq
    C  \left ( 1+\frac{\rho_0}{r_D} \right )
    \left ( \int_{C_{2\overline{r}}^{+}}
   \rho_0^5 | \widehat{\nabla} \varphi_r^+ |^2 +
    \rho_0^3 | \varphi_r^+ + \nabla w_r^+   |^{2}
    + C \int_{C_{2\overline{r}}^{+}} \rho_0^3 | \varphi_r^+ |^{2}
    \right )
    \leq
    \\
    \leq
    C  \left ( 1+\frac{\rho_0}{r_D} \right )
  \int_{C_{2\overline{r}}^{+}}
   \rho_0^5 | \widehat{\nabla} \varphi_r^+ |^2 +
    \rho_0^3 | \varphi_r^+ + \nabla w_r^+   |^{2},
\end{multline}
where $C>0$ only depends on $L_D$, $\alpha_0$, $\gamma_0$,
$\alpha_1$.

Since $ 1 + \rho_0/r_D \leq (1+d_0/2)\rho_0/r_D$, and recalling
\eqref{eq:rigid-25-2}, we obtain the wished estimate
\eqref{eq:rigid-L2-estimate-partialD}.
\end{proof}

We conclude the section with a proof of Proposition
\ref{prop:Local-Boundary-Estimates-L2}, which is based on the
following result.

\begin{lem}
\label{Rellich}
Let $S$, $\mathbb P$ given in \eqref{eq:shearing-tensor},
\eqref{eq:bending-tensor} satisfy the strong convexity conditions
\eqref{eq:convex-S-Lame}, \eqref{eq:convex-P-Lame} and the regularity conditions in \eqref{eq:lip_operatori}.

For every $w \in H^{3/2}(C_{2 \rho}^{+})$ such that ${\rm div}(S
\nabla w ) \in L^{2}(C_{2 \rho}^{+})$ and $w=| \nabla w |=0$ on
  $\partial C_{2 \rho}^{+} \setminus \Delta_{2 \rho}$, we have
\begin{equation}
  \label{eq:rigid-16-1}
  \int_{\Delta_{\rho}} | S \nabla w \cdot n|^{2}
  \leq
  C
  \left (
  \rho_0^2 \int_{\Delta_{2 \rho}}  \rho_0 | \nabla_{T} w|^{2}
  +
  \rho_0 \int_{C_{2 \rho}^{+}} | \nabla w|^{2}
  +
  | \nabla w||{\rm div}(S \nabla w)|
  \right
  ),
\end{equation}
where $C>0$ only depends on $L$, $\alpha_0$, $\gamma_0$,
$\alpha_1$.

For every $\varphi \in H^{3/2}(C_{2 \rho}^{+}, \R^2)$ such that
${\rm div}(\mathbb P \nabla \varphi ) \in L^{2}(C_{2 \rho}^{+},
\R^2)$ and $|\varphi|=| \nabla \varphi |=0$ on
  $\partial C_{2 \rho}^{+} \setminus \Delta_{2 \rho}$, we have
\begin{equation}
  \label{eq:rigid-16-2}
  \int_{\Delta_{\rho}} | (\mathbb P \nabla \varphi) n|^{2}
  \leq
  C
  \left (
  \int_{\Delta_{2 \rho}} \rho_0^6 | \nabla_{T} \varphi |^{2}
  +
  \int_{C_{2 \rho}^{+}} \rho_0^5 | \nabla \varphi |^{2}
  +
  \rho_0^3 | \nabla \varphi ||{\rm div}(\mathbb P \nabla \varphi)|
  \right
  ),
\end{equation}
where $C>0$ only depends on $L$, $\alpha_0$, $\gamma_0$,
$\alpha_1$.
\end{lem}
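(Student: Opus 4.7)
The plan is to establish both inequalities via the classical Rellich--Payne--Weinberger multiplier technique, adapted to the Mindlin--Reissner setting. The core construction is a vector field $\beta \in C^{0,1}(\overline{C_{2\rho}^+}, \R^2)$ satisfying (i) $\beta = n$ on $\Delta_\rho$, (ii) $\beta \cdot n \geq c_0 > 0$ on $\Delta_{2\rho}$, (iii) $\beta \cdot n = 0$ on $\partial C_{2\rho}^+ \setminus \Delta_{2\rho}$, and (iv) $\|\beta\|_{L^\infty} \leq C$, $\|\nabla\beta\|_{L^\infty} \leq C/\rho$, with $C$ depending only on $L$. Such $\beta$ is built from a smooth extension of the unit normal to the graph of $\psi$, multiplied by an appropriate cutoff in $x_2$.

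For the scalar estimate, I first assume $w \in H^2(C_{2\rho}^+)$ (recovering the $H^{3/2}$ case by density) and test the equation $\operatorname{div}(S\nabla w) = f$ against $\beta \cdot \nabla w$. Since $S$ is a scalar coefficient, integration by parts and the standard identity $S\nabla w \cdot D^2 w \beta = \beta \cdot \nabla(\tfrac{S}{2}|\nabla w|^2) - \tfrac{1}{2}|\nabla w|^2 \beta \cdot \nabla S$ yield the Rellich identity
\begin{equation*}
\int_{\partial C_{2\rho}^+} \bigl[ S(\beta\cdot\nabla w)(\partial_n w) - \tfrac{S}{2}|\nabla w|^2 (\beta\cdot n)\bigr]
= \int_{C_{2\rho}^+} \bigl[ (\nabla\beta)^T \nabla w \cdot S\nabla w - \tfrac{S}{2}|\nabla w|^2 \operatorname{div}\beta - \tfrac{1}{2}|\nabla w|^2 \beta\cdot\nabla S + (\beta\cdot\nabla w) f\bigr].
\end{equation*}
By the support properties of $\beta$, only the $\Delta_{2\rho}$ portion of the boundary contributes. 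Decomposing $\nabla w = (\nabla_T w) T + (\partial_n w) n$ on $\Delta_{2\rho}$, the boundary integrand on $\Delta_\rho$ becomes $\tfrac{S}{2}(\partial_n w)^2 - \tfrac{S}{2}|\nabla_T w|^2$; on $\Delta_{2\rho}\setminus \Delta_\rho$, the same quadratic form in $\partial_n w$ has sign controlled by $\beta \cdot n \geq c_0$. Using coercivity $S \geq h\sigma_0$, the Lipschitz bound $\|\nabla S\|_\infty \leq h\alpha_1$, and $|\nabla\beta|\leq C/\rho$ (which produces the factor $1+\rho_0/\rho$ after rescaling with the dimensional convention), one isolates $\int_{\Delta_\rho}(\partial_n w)^2$ and bounds it by the right-hand side of \eqref{eq:rigid-16-1}, noting that $|S\nabla w \cdot n|^2 = S^2 (\partial_n w)^2 \lesssim h^2 (\partial_n w)^2$.

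The vector estimate \eqref{eq:rigid-16-2} uses the analogous multiplier $\psi_i = \beta^k \partial_k \varphi_i$, i.e. testing $\operatorname{div}(\mathbb{P}\nabla\varphi)$ componentwise against $(\nabla\varphi)\beta$. The major symmetry of $\mathbb{P}$ (implied by its construction in \eqref{eq:bending-tensor}) allows the same Pohozaev-type manipulation: the term $\mathbb{P}\nabla\varphi : D(\nabla\varphi\,\beta)$ rewrites as $\beta \cdot \nabla(\tfrac{1}{2}\mathbb{P}\nabla\varphi:\nabla\varphi)$ modulo lower-order contributions from $\nabla\mathbb{P}$ and $\nabla\beta$. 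Integration by parts produces a boundary identity structurally identical to the scalar one, with $\mathbb{P}\nabla\varphi\cdot\nabla\varphi$ replacing $S|\nabla w|^2$ and $(\mathbb{P}\nabla\varphi)n$ replacing $S\nabla w \cdot n$. Coercivity \eqref{eq:convex-P-Lame} together with the Lipschitz bound $\|\mathbb{P}\|_{C^{0,1}}\leq Ch^3$ gives the powers of $\rho_0$ in \eqref{eq:rigid-16-2}.

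The delicate point will be the construction of $\beta$ with the combination of properties (i)--(iv) above, since one needs $\beta = n$ exactly on $\Delta_\rho$ while simultaneously having $\beta \cdot n \geq c_0$ on all of $\Delta_{2\rho}$ (so that the normal-derivative boundary term does not lose sign), and $\beta\cdot n = 0$ on the Dirichlet portion (so that even if the trace of $|\nabla w|$ were merely $H^{1/2}$ on the corner region the boundary contribution vanishes); all constants must depend on $L$ only. A second technical hurdle is the justification of the integration by parts at the regularity level $H^{3/2}$: I would proceed by density, approximating $w$ (resp.\ $\varphi$) in $H^{3/2}$ by smooth functions vanishing to first order on $\partial C_{2\rho}^+ \setminus \Delta_{2\rho}$, establishing \eqref{eq:rigid-16-1}--\eqref{eq:rigid-16-2} for the approximants, and passing to the limit using the lower-semicontinuity of the left-hand sides.
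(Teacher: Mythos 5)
Your overall strategy coincides with the paper's: the paper's proof of this lemma is essentially a pointer to Lemma 5.2 of \cite{l:amr02} and Lemma 4.3 of \cite{l:mr03}, whose engine is exactly the Rellich--Payne--Weinberger multiplier identity you set up, with the boundary integrand decomposed into tangential and conormal parts and the major symmetry of $\mathbb P$ used in the vector case. The gap is in the step you yourself flag as delicate, the multiplier field $\beta$. Property (i), $\beta=n$ on $\Delta_\rho$ with $\beta\in C^{0,1}$, is unattainable in general: $\Delta_{2\rho}$ is only a Lipschitz graph, so its unit normal is merely an $L^\infty$, generically discontinuous, vector field, and no Lipschitz extension can agree with it on $\Delta_\rho$. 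It is also unnecessary: all the boundary argument needs is $\beta\cdot n\geq c_0$ a.e.\ on $\Delta_{2\rho}$, since then the coefficient of $(\partial_n w)^2$ in the boundary quadratic form is bounded below by $\tfrac{S}{2}c_0$ and $|S\nabla w\cdot n|^2=S^2(\partial_n w)^2$ is controlled after multiplying through by $\sup S$; likewise property (iii) is redundant because the hypothesis $w=|\nabla w|=0$ on $\partial C_{2\rho}^+\setminus\Delta_{2\rho}$ already kills those boundary terms for any $\beta$.

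More substantively, a field with $\|\nabla\beta\|_\infty\leq C/\rho$ injects an interior term of order $\rho^{-1}\int_{C_{2\rho}^+}S|\nabla w|^2$ into the identity, so your argument proves \eqref{eq:rigid-16-1} only with an extra factor $1+\rho_0/\rho$ on the interior gradient term, whereas the lemma asserts a constant depending only on $L$, $\alpha_0$, $\gamma_0$, $\alpha_1$, with no $\rho$-dependence. The factor $1+\rho_0/\rho$ you mention belongs to the later estimate \eqref{eq:rigid-17-2} of Proposition \ref{prop:Local-Boundary-Estimates-L2}, where it is produced by the cutoff $\eta$ applied to the solution, not by the Rellich multiplier. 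Both defects disappear at once if you take $\beta$ to be a fixed constant vector in the graph coordinates (e.g.\ $\beta=-e_2$), for which $\beta\cdot n\geq(1+L^2)^{-1/2}$ a.e.\ on $\Delta_{2\rho}$ and $\nabla\beta\equiv 0$, so that the only interior contributions are the $\nabla S$ (resp.\ $\nabla\mathbb P$) term, controlled by \eqref{eq:lip_operatori}, and the term $(\beta\cdot\nabla w)\,{\rm div}(S\nabla w)$; with this choice your computation, including the vector case, reproduces \eqref{eq:rigid-16-1}--\eqref{eq:rigid-16-2} as stated. The $H^{3/2}$ justification also needs more care than a bare density claim (the cited proofs approximate the Lipschitz boundary rather than the solution), but that is secondary to the multiplier issue.
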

\begin{proof}
The proof follows the lines of the proof of the analogous result
obtained in conductivity and elasticity context, see
\cite{l:amr02} (Lemma $5.2$) and \cite{l:mr03} (Lemma $4.3$),
respectively. The key mathematical tool is a generalization of the
well-known Rellich's identity \cite{l:r}.
\end{proof}

\begin{proof}[Proof of Proposition
\ref{prop:Local-Boundary-Estimates-L2}]
The proof can be obtained by adapting the arguments used, for
example, in the proof of the analogous result in three-dimensional
elasticity \cite{l:mr03} (Proposition $4.2$), see also
\cite{l:amr02} (Proposition $5.1$). Moreover, the proof of the
estimates \eqref{eq:rigid-17-2} and \eqref{eq:rigid-18-2} follows
the same path. Therefore, we sketch the proof of the inequality
\eqref{eq:rigid-17-2} only.

We first prove the thesis under the additional assumption that $w
\in H^{3/2}(C_{2 t}^{+})$ for every $t< \rho$.

Let us introduce the cut-off function in $\R^{2}$
\begin{equation}
  \label{eq:rigid-18-3}
  \eta(x_1,x_2)=\chi(x_1)\Psi(x_2),
\end{equation}
where
\begin{equation}
  \label{eq:rigid-18-3a}
  \chi \in C_{0}^{\infty} ( \R), \quad \chi(x_1)\equiv 1
  \ \ \textrm{if} \  |x_1|\leq\rho,
\end{equation}
\begin{equation}
  \label{eq:rigid-18-3b}
    \chi(x_1)\equiv 0 \ \
  \textrm{if} \  |x_1|\geq\frac{3}{2}\rho,
\end{equation}
\begin{equation}
  \label{eq:rigid-18-3c}
  \|  \chi' \|_{\infty} \leq C_{1}
  \rho^{-1}, \quad
  \|  \chi'' \|_{\infty} \leq C_{1}
  \rho^{-2},
\end{equation}
\begin{equation}
  \label{eq:rigid-18-3d}
  \Psi \in C_{0}^{\infty} ( \R), \quad \psi( x_{2}) \equiv 1
  \ \ \textrm{if} \  |x _{2} |\leq\rho L,
\end{equation}
\begin{equation}
  \label{eq:rigid-18-3e}
    \psi(x_{2} )\equiv 0 \ \
  \textrm{if} \  |x _2|\geq\frac{3}{2}\rho L,
\end{equation}
\begin{equation}
  \label{eq:rigid-18-3f}
  \|  \psi' \|_{\infty} \leq C_{2}
  \rho^{-1}, \quad
  \|  \psi'' \|_{\infty} \leq C_{2}
  \rho^{-2},
\end{equation}
where $C_{1}$ is an absolute constant and $C_{2}$ is a constant
only depending on $L$.

For every $c \in \R$ the function
\begin{equation}
  \label{eq:rigid-19-1}
  u=\eta(w-c)
\end{equation}
satisfies the hypotheses of Lemma \ref{Rellich} with $\rho=t$, for
every $t \in \left ( \frac{3}{4} \rho, \rho \right)$.

By substituting \eqref{eq:rigid-19-1} in \eqref{eq:rigid-16-1},
and recalling \eqref{eq:rigid-17-1}, we have
\begin{multline}
  \label{eq:rigid-21-1}
    \int_{\Delta_t} S^2
    \left (
    (w-c)^2 |\nabla \eta \cdot n|^2 + \eta^2 |\nabla w \cdot n|^2
    +2\eta(w-c)(\nabla \eta \cdot n) (\nabla w \cdot n)
    \right )
    \leq
    \\
    \leq
    C \rho_0^2
    \int_{\Delta_{2t}} (w-c)^2 |\nabla_T \eta|^2 + \eta^2
    |\nabla_T w|^2 + 2\eta(w-c) \nabla_T \eta \cdot \nabla_T w
    +
    \\
    +
    C \rho_0
    \int_{C_{2t}^+} (w-c)^2 |\nabla \eta|^2 + \eta^2 |\nabla w|^2
    + 2 \eta(w-c)\nabla \eta \cdot \nabla w + |\varphi|^2 +
    \rho_0^2 |\nabla \varphi|^2,
\end{multline}
where $C>0$ only depends on $L$, $\alpha_0$, $\gamma_0$,
$\alpha_1$.

By recalling \eqref{eq:rigid-18-3a}--\eqref{eq:rigid-18-3f}, by
using Schwarz inequality and $2ab \leq a^2/\epsilon + \epsilon
b^2$, for every $\epsilon >0$, we obtain
\begin{multline}
  \label{eq:rigid-22-1}
  \int_{\Delta_{t}} |  S \nabla w \cdot n   |^{2}
  \leq
    \\
    \leq
  C
  \left (
  \rho_0^2
    \int_{\Delta_{2t}}  \frac{(w-c)^2}{t^2}  +  | \nabla_{T} w|^{2}
    +
  \rho_0
    \int_{C_{2t}^{+}} \frac{(w-c)^2 }{ t^2 } + |\nabla w |^2 +
  |\varphi|^2 + \rho_0^2 |\nabla \varphi|^2
  \right ),
  \\
  \quad \textrm{for every }  t \in \left ( \frac{3}{4} \rho, \rho \right ),
\end{multline}
where $C>0$ only depends on $L$, $\alpha_0$, $\gamma_0$,
$\alpha_1$.

Choosing $c=\frac{1}{|C_{2t}^{+}| } \int_{C_{2t}^{+} } w$, by
applying trace inequalities and Poincar\'{e}'s inequality
\eqref{eq:PoincG-a}, we have
\begin{multline}
  \label{eq:rigid-23-3}
  \int_{\Delta_{t}} |  S \nabla w \cdot n   |^{2}
  \leq
    \\
    \leq
  C
  \left (
  \rho_0^2 \int_{\Delta_{2t}}  | \nabla_{T} w|^{2} +\left ( 1 +
  \frac{\rho_0}{t} \right) \int_{C_{2t}^+} \rho_0 | \nabla w|^{2}
  +
  \int_{C_{2t}^{+}} \rho_0 |\varphi|^2 + \rho_0^3 |\nabla \varphi|^2
  \right ),
  \\
  \quad \textrm{for every }  t \in \left ( \frac{3}{4} \rho, \rho \right ),
\end{multline}
where $C>0$ only depends on $L$, $\alpha_0$, $\gamma_0$,
$\alpha_1$. Passing to the limit for $t \rightarrow \rho$, we
obtain \eqref{eq:rigid-17-2}.

We notice that if the function $\Psi$ representing the boundary
$\Delta_{2t}$ is smooth, then the additional assumption made at
the beginning of this proof (e.g., $w \in H^{3/2}(C_{2 t}^{+})$
for every $t< \rho$) is satisfied by regularity estimates up to
the boundary for solutions to \eqref{eq:rigid-17-1}. When
$\Delta_{2t}$ is represented by a Lipschitz function, the thesis
can be obtained by following the approximation argument presented
in \cite{l:mr03} (Step $2$ of Proposition $4.2$).
\end{proof}

\bigskip


\begin{thebibliography}{GCRDF85}


\bibitem[AR98]{l:ar98}
G. Alessandrini and E. Rosset, The inverse conductivity problem
with one measurement: bounds on the size of the unknown object,
SIAM Journal on Applied Mathematics 58(4) (1998) 1060--1071.

\bibitem[ARS00]{ARS00-PAMS}
G. Alessandrini, E. Rosset and J.K. Seo, Optimal size estimates
for the inverse conductivity problem with one measurement,
Proceedings of the Americal Mathematical Society 128 (2000)
53--64.

\bibitem[AMR02]{l:amr02}
G. Alessandrini, A. Morassi and E. Rosset, Detecting cavities by
electrostatic boundary measurements, Inverse Problems 18 (2002)
1333--1353.

\bibitem[BCOZ]{l:bcoz}
E. Beretta, C. Cavaterra, J.H. Ortega and S. Zamorano, Size
estimates of an obstacle in a stationary Stokes fluid, Inverse
Problems 33 (2017) 025008.

\bibitem[C]{l:c80}
S. Campanato, \textit{Sistemi ellittici in forma divergenza.
Regolarit\`{a} all'interno.} Quaderni della Scuola Normale
Superiore di Pisa, 1980.

\bibitem[DiCLW13]{DiCLW13-ASNSP}
M. Di Cristo, C.L. Lin and J.N. Wang, Quantitative uniqueness
estimates for the shallow shell system and their application to an
inverse problem, Annali della Scuola Normale Superiore di Pisa,
Classe di Scienze 12 (2013) 43--92.

\bibitem[DiCLVW13]{DiCLVW13-SIMA}
M. Di Cristo, C.L. Lin, S. Vessella and J.N. Wang, Size estimates
of the inverse inclusion problem for the shallow shell equation,
SIAM Journal on Mathematical Analysis 45 (2013) 88--100.

\bibitem[FKP91]{l:fkp}
R.A. Fefferman, C.E. Kenig and J. Pipher, The theory of weights
and the Dirichlet problem for elliptic equations, Annals of
Mathematics 134 (1991) 65--124.

\bibitem[F]{l:friedrichs}
K.O. Friedrichs, On the boundary value problems of the theory of
elasticity and Korn's inequality, Annals of Mathematics 48 (1947)
441--471.

\bibitem[I98]{Ik98-JIIPP}
M. Ikehata, Size estimation of inclusion, Journal of Inverse and
Ill-Posed Problems 6 (1998) 127--140.

\bibitem[KKM12]{KaKiMi12}
H. Kang, E. Kim and G.W. Milton, Sharp bounds on the volume
fractions of two materials in a two-dimensional body {}from
electrical boundary measurements: the translation method, Calculus
of Variations and Partial Differential Equations 45 (2012)
367--401.

\bibitem[KM13]{KaMi13}
H. Kang and G.W. Milton, Bounds on the volume fractions of two
materials in a three-dimensional body {}from boundary measurements
by the translation method, SIAM Journal on Applied Mathematics 73
(2013) 475--492.

\bibitem[KSS97]{KSS97-SIMA}
H. Kang, J.K. Seo and D. Sheen, The inverse conductivity problem
with one measurement: stability and estimation of size, SIAM
Journal on Mathematical Analysis 128 (1997) 1389--1405.

\bibitem[KP93]{l:kp}
C.E. Kenig and J. Pipher, The Neumann problem for elliptic
equations with non-smooth coefficients, Inventiones Mathematicae
113 (1993) 447--509.

\bibitem[KO89]{l:ko}
V.A. Kondrat'ev and O.A. Oleinik, On the dependence of the
constant in Korn's inequality on parameters characterizing the
geometry of the region, Russian Mathematical Surveys 44 (1989)
187--195.

\bibitem[MN12]{MiNg12}
G.W. Milton and L.H. Nguyen, Bounds on the volume fraction of
2-phase, 2-dimensional elastic bodies and on (stress, strain)
pairs in composite, Comptes Rendus M\'ecanique 340 (2012)
193--204.

\bibitem[MR03]{l:mr03}
A. Morassi and E. Rosset, Detecting rigid inclusions, or cavities,
in an elastic body, Journal of Elasticity 73 (2003) 101--126.

\bibitem[MRV13]{l:mrv13}
A. Morassi, E. Rosset and S. Vessella, Recent results about the
detection of unknown boundaries and inclusions in elastic plates,
Journal of Inverse and Ill-Posed Problems 21 (2013) 311--352.

\bibitem[MRV17]{l:mrv17}
A. Morassi, E. Rosset and S. Vessella, A generalized Korn inequality
and strong unique continuation for the Reissner-Mindlin plate
system, Journal of Differential Equations 263 (2017) 811--840.

\bibitem[MRV18]{l:mrv18}
A. Morassi, E. Rosset and S. Vessella, Size estimates for fat
inclusions in an isotropic Reissner-Mindlin plate, Inverse
Problems, 34 (2018) 025001, 1--26.

\bibitem[N]{l:necas}
J. Necas, \textit{Les m\'{e}thods directs en th\'{e}orie des
\'{e}quations \'{e}lliptiques}, Academia, Prague, 1967.

\bibitem[PW58]{l:pw}
L.E. Payne and H.F. Weinberger, New bounds for solutions of second
order elliptic partial differential equations, Pacific Journal of
Mathematics 8 (1958) 551--573.

\bibitem[R]{l:r}
F. Rellich, Darstellung der eigenwerte $\Delta u +\lambda u=0$
durch ein Randintegral, Mathematische Zeitschrift 46 (1940)
635--646.

\end{thebibliography}
\end{document}